\theoremstyle{plain}
\newtheorem{thm}{Theorem}[section]
\newtheorem{cor}[thm]{Corollary}
\newtheorem{lm}[thm]{Lemma}
\theoremstyle{definition}
\newtheorem{de}[thm]{Definition}
\newtheorem{exm}[thm]{Example}
\newtheorem{remark}[thm]{Remark}
\newcommand{\wpf}{\mathcal{P}_{>0}^{<\om}}
\newcommand{\Om}{\Omega}
\newcommand{\om}{\omega}
\newcommand{\vv}{\mathcal{V}}
\newcommand{\vva}{\mathcal{V}}
\newcommand{\mm}{\mathcal{S}}
\newcommand{\typ}{\mho}
\def\sucdot{\operatornamewithlimits{\cup\!\!\!\cdot}}
\begin{document}

\title{Semilattice ordered algebras with constants}

\author{Agata Pilitowska}
\author{Anna Zamojska-Dzienio}
\address{(A.P., A.Z.) Faculty of Mathematics and Information Science, Warsaw University of Technology, Koszykowa 75, 00-662 Warsaw, Poland}
\email{(A.P.) A.Pilitowska@mini.pw.edu.pl}
\email{(A.Z.) A.Zamojska@mini.pw.edu.pl}
\keywords{free ordered structures, power algebras, idempotent semirings}
\subjclass[2020]{Primary: 06F05. Secondary: 08B20, 08A30.}
\date{\today}

\begin{abstract}
We continue our studies on semilattice ordered algebras. This time we accept constants in the type of algebras. We investigate identities satisfied by such algebras and describe the free objects in varieties of semilattice ordered algebras with constants.
\end{abstract}
\maketitle
\section{Introduction}\label{sec:Intr}

Ordered algebras such as Boolean algebras, Heyting algebras, lattice-ordered groups,
and MV-algebras have played a decisive role in logic, both as the models of theories of
first (or higher) order logic, and as algebraic semantics for the plethora of
non-classical logics emerging in the twentieth century from linguistics, philosophy,
mathematics, and computer science. For example, lattice-ordered groups play a
fundamental role in the study of algebras of logic, while MV-algebras are the algebraic
counterparts of the infinite-valued \L ukasiewicz propositional logic. 

Another important and widely investigated class is given by \emph{quantales} \cite{Ro90, Rump} - complete semilattices with an additional associative multiplication that distributes over
arbitrary joins. They were introduced in the 1980s as
a non-commutative generalization of \emph{locales}, to capture the non-commutative
logic arising in quantum mechanics. Quantales are examples of \emph{semilattice ordered algebras} (\emph{SLO algebras}, for short) we are interested in this paper.

In the series of papers \cite{PZ11}--\cite{PZ15} we have investigated SLO algebras $(A,\Om,\leq)$, where $(A,\leq)$ is a (join) semilattice, $(A,\Omega)$ is an algebra (where $\Omega$ is a set of operations of any finitary positive arity, and, moreover, $\Omega$ is not necessarily finite) and each operation from $\Omega$ distributes over the join. Obviously, examples are provided not only by already mentioned quantales or well known \emph{additively idempotent semirings} \cite{ChL18, G99, Z02}, but SLO algebras are much more general structures. The basic role in the theory was played by extended power algebras of non-empty subsets and extended algebras of (non-empty) subalgebras. The main aim of this paper is to describe the properties of SLO algebras with constants, i.e. we allow operations of the arity equal to zero (or in case of power constructions we allow the empty subset and the empty subalgebra). We study the relations between the SLO algebras with the signatures including and excluding constants. Our motivation was very natural and came from applications in logic, where constants $0$ and $1$ play a significant role. Similar research in case of \emph{commutative doubly-idempotent semirings} has been recently described in \cite{AJ, ChL18}.

The paper is organized as follows. In Section 2, we provide basic
definitions, results and examples concerning semilattice ordered algebras with and without various types of constants in the signature. In Section 3 we investigate identities satisfied by SLO algebras and we present a
necessary and sufficient condition for a SLO algebra to satisfy some
non-linear identity. In Section 4 we describe the free objects in an
arbitrary variety $\mm$ of semilattice ordered algebras (with various types of constants in the signature) and in the quasivariety of
$\Om$-subreducts of SLO algebras in $\mm$. In Section 5 we apply the results to some particular idempotent varieties of SLO algebras.

\section{SLO algebras}\label{sec:SLO}
Let $\typ$ be the variety of all algebras $(A,\Om)$ of a (fixed)
finitary type $\tau\colon\Om\to \mathbb{N}^+$ and let
$\vv\subseteq\typ$ be a subvariety of $\typ$. In \cite{PZ12a} we
introduced the following definition of a semilattice ordered
algebra.
\begin{de}\label{sec2:def1}
An algebra $(A,\Om,+)$ is called a {\bf\emph{semilattice ordered
$\vv$-algebra}} (or briefly {\bf\emph{semilattice ordered algebra}}) if
$(A,\Om)$ belongs to a variety $\vv$, $(A,+)$ is a (join)
semilattice (with semilattice order $\leq$, i.e. $x\leq y\;
\Leftrightarrow \; x+y=y$), and the operations from the set $\Om$
\emph{distribute} over the operation $+$, i.e. for each $n$-ary operation $\om\in \Om$, and $x_1,\ldots,x_i,y_i,\dots,x_n\in
A$
\begin{eqnarray*}
\om(x_1,\ldots,x_i+ y_i,\ldots,x_n)=\label{sec2:eq1}
\\
\om(x_1,\ldots,x_i,\ldots,x_n)+
\om(x_1,\ldots,y_i,\ldots,x_n),\nonumber
\end{eqnarray*}
for any $1\leq i \leq n$.
\end{de}

Definition \ref{sec2:def1} can be also formulated for semilattice ordered algebras with constants. Such constants may be of two types.
The first ones may be some special elements in the semilattice $(A,+)$ and the second ones may refer to the algebra $(A,\Om)\in \vv$. In particular, we can consider semilattice algebras with neutral element with respect to the operation $+$ or with unit elements with respect to operations in $\Om$.
\begin{de}
An algebra $(A,\Om,+,0)$ is called a $0$-{\bf\emph{semilattice ordered
$\vv$-algebra}} if $(A,\Om,+)$ is a semilattice ordered $\vv$-algebra, $(A,+,0)$ is the semilattice
with the least element $0$ and for each $\om\in \Om$ and $x_1,\ldots,x_i,\dots,x_n\in
A$
\[\om(x_1,\ldots,x_i,\ldots,x_n)=0
\]
whenever there is $1\leq i\leq n$ such that $x_i=0$.
\end{de}

\begin{de}
Let $A$ be a non-empty set. An element $\alpha\in A$ is called a {\bf\emph{unit for an $0\neq n$-ary operation}}
$\om\colon A^n\to A$, if for every $x\in A$
\[
\om(x,\alpha,\ldots,\alpha)=\om(\alpha,x,\alpha,\ldots,\alpha)=\ldots=\om(\alpha,\ldots,\alpha,x)= x.
\]
We say that $\alpha$ is a {\bf\emph{unit for an
algebra}} $(A,\Om)$, if it is a unit for each operation $\om\in \Om$.
\end{de}
\begin{remark}
Let $(A,\Om)$ be an algebra. Denote by $\mathcal{E}$ the set of all units for $(A,\Om)$. If there is a binary operation in $\Om$ then $\left|\mathcal{E}\right|\leq 1$, but in general $0\leq \left|\mathcal{E}\right|\leq \left|A\right|$. In this paper we assume that whenever a unit exists it is unique, i.e. we consider algebras $(A,\Om,\alpha)$ of a (fixed)
finitary type $\tau\colon\Om\cup\{\alpha\}\to \mathbb{N}$ with the unit $\alpha\in\mathcal{E}$. We denote this unique unit by $1$.
\end{remark}

Let $\typ_{1}$ be the variety of all algebras $(A,\Om,1)$ of a (fixed)
finitary type $\tau\colon\Om\cup\{1\}\to \mathbb{N}$ with the unit $1$ and such that $(A,\Om)\in\typ$ and let
$\vv_1\subseteq\typ_1$ be a subvariety of $\typ_1$.

\begin{de}
An algebra $(A,\Om,+,1)$  is a {\bf\emph{semilattice ordered $\vv_1$-algebra with a unit $1$}}
if $(A,\Om,+)$ is a semilattice ordered $\vv$-algebra and $1$ is a unit for $(A,\Om)$.
\end{de}
Note that we do not assume that $1$ is the greatest element in the semilattice $(A,+)$.
\begin{de}
An algebra $(A,\Om,+,0,1)$  is a $0$-{\bf\emph{semilattice ordered $\vv_1$-algebra with a unit}}
if $(A,\Om,+,0)$ is a $0$-semilattice ordered $\vv$-algebra and $1$ is a unit for $(A,\Om)$.
\end{de}

As a direct consequence of distributivity we immediately obtain that in a semilattice ordered algebra $(A,\Om,+)$
for each $n$-ary operation $\om\in \Om$ and
$x_{ij}\in A$ for $1\leq i\leq n$, $1\leq j\leq r$ we have
\begin{align}\label{sec2:eq3}
\om(x_{11},\ldots, x_{n1})+ \ldots +
\om(x_{1r},\ldots, x_{nr})\\
\leq \om(x_{11}+ \ldots + x_{1r},\ldots,x_{n1}+\ldots + x_{n
r}).\nonumber
\end{align}

It is also easy to notice that in semilattice ordered algebras all
$\Om$-operations are \emph{monotone} with respect to
the semilattice order $\leq$: if $x_i\leq y_i\in A$ for each $1\leq i\leq n$,
then
\begin{equation}\label{sec2:eq2}
\om(x_1,\ldots, x_n)\leq \om(y_1,\ldots, y_n).
\end{equation}

This means that such algebras form a
subclass of a class of ordered algebras in the sense of \cite{CL83}
(see also \cite{F66} and \cite{B76}). Basic examples are given by
additively idempotent semirings, distributive lattices, semilattice
ordered semigroups \cite{GPZ05}, semilattice ordered idempotent, entropic algebras (modals) \cite{PZ11}, extended power algebras \cite{PZ12a}
or semilattice modes \cite{K95}.

We start with providing a few natural examples of semilattice ordered algebras.

\begin{exm}{\bf Semilattice ordered semigroups.} An algebra $(A,\cdot,+)$, where $(A,\cdot)$ is a
semigroup, $(A,+)$ is a semilattice and for any $a,b,c\in A$,
$a\cdot(b+c)=a\cdot b+a\cdot c$ and $(a+b)\cdot c=a\cdot c+b\cdot c$
is a {\bf\emph{semilattice ordered semigroup}}. In particular,
semirings with an idempotent additive reduct (\cite{Z02},
\cite{PZ05}), {\bf\emph{dissemilattices}} (called also {\bf\emph{$\cdot$-distributive bisemilattices}} in \cite{MR79}) - algebras
$(M,\cdot,+)$ with two semilattice structures $(M,\cdot)$ and
$(M,+)$ in which the operation $\cdot$ distributes over the
operation $+$, and
distributive lattices are semilattice ordered
$\mathcal{SG}$-algebras, where $\mathcal{SG}$ denotes the variety of
all semigroups.

Another important class here is given by {\bf\emph{quantales}} \cite{Ro90, Rump}, i.e. semilattice ordered semigroups $(A,\cdot,+)$, where $(A,+)$ is a {\bf\emph{complete}} semilattice and the operation $\cdot$ distributes over arbitrary joins. 
\end{exm}

\begin{exm}{\bf Extended power
algebras of algebras.}\cite{PZ12a} For a given set $A$ denote by
$\mathcal{P}A$ the family of all subsets of $A$ and by $\mathcal{P}_{>0}A$
the family of all non-empty subsets of $A$. For
any $0\neq n$-ary operation $\om\colon A^n\to A$ we define {\bf\emph{the
complex operation}} $\om\colon(\mathcal{P}A)^n\to
\mathcal{P}A$ in the following way:
\begin{equation}\label{sec3:eq1}
\om(A_1,\dots,A_n):=\{\om(a_1, \dots, a_n)\mid a_i \in A_i\},
\end{equation}
where $\emptyset\neq A_1,\dots,A_n\subseteq A$
and
\begin{equation*}
\om(A_1,\dots,A_n):=\emptyset,
\end{equation*}
if there is $A_i=\emptyset$ for some $1\leq i\leq n$.

The set
$\mathcal{P}A$ also carries a join semilattice structure under
the set-theoretical union $\cup$. In \cite{JT51} B. J${\rm
\acute{o}}$nsson and A. Tarski proved that complex operations
distribute over the union $\cup$. Hence, for any algebra
$(A,\Om)\in\typ$, the {\bf\emph{extended power algebra}}
$(\mathcal{P}_{>0}A,\Om,\cup)$ is a semilattice ordered
$\typ$-algebra and the
$\emptyset$-{\bf\emph{extended power algebra}}
$(\mathcal{P}A,\Om,\cup,\emptyset)$ is a $0$-semilattice ordered
$\typ$-algebra.

\end{exm}
Notice that the algebra $(\mathcal{P}^{<\om}A,\Om,\cup,\emptyset)$ [$(\wpf A,\Om,\cup)$, respectively] of all finite
(non-empty) subsets of $A$ is a subalgebra of $(\mathcal{P}A,\Om,\cup,\emptyset)$
[$(\mathcal{P}_{>0}A,\Om,\cup)$, respectively].
Moreover the power algebra of all subsets of $A$ can also be viewed as
\emph{the Boolean algebra}
$(\mathcal{P}A,\cup,\cap,-,A,\emptyset,\Om)$ \emph{with operators}
$\Om$. This concept was introduced and studied by B. J\'{o}nsson and
A. Tarski \cite{JT51, JT52}.

\begin{exm}{\bf Extended power
algebras of algebras with a unit.}
Let $(A,\Om,1)$ be an algebra with the unit $1\in A$. It is clear that for any non-empty subset $X\subseteq A$ and $n$-ary operation $\om\in \Om$
\[
\om(\{1\},\ldots,\underbrace{X}_{i},\ldots,\{1\})=\{\om(1, \dots, x_i,\ldots,1)\mid x_i \in X\}=\{x_i\mid x_i \in X\}=X.
\]
Then the algebra
$(\mathcal{P}_{>0}A,\Om,\cup,\{1\})$ is a semilattice ordered
algebra with the unit $\{1\}$ and the algebra
$(\mathcal{P}A,\Om,\cup,\emptyset,\{1\})$ is a $0$-semilattice ordered
algebra with the unit $\{1\}$.
\end{exm}

For a semigroup $(A,\cdot)$ [a monoid $(A,\cdot,1)$, respectively] its $\emptyset$-extended power algebra $(\mathcal{P}A,\cdot,\cup,\emptyset)$ [$\emptyset$-extended power algebra with a unit $(\mathcal{P}A,\cdot,\cup,\emptyset,\{1\})$, respectively] is a basic example of a quantale [unital quantale, respectively].

An algebra $(A,\Om)$ is {\bf\emph{idempotent}} if each
singleton is a subalgebra, i.e. for every $n$-ary operation $\om\in \Om$ and $x\in A$ the following is
satisfied:
\begin{eqnarray*}
\om(x,\ldots,x)= x. 
\end{eqnarray*}
A variety $\vva$ of algebras is called \emph{idempotent} if every
algebra in $\vva$ is idempotent.

An algebra $(A,\Om)$ is  {\bf\emph{entropic}} if any two of its operations
commute. This property may also be expressed by means
of identities: for every $m$-ary $\om\in \Om$ and $n$-ary $\varphi\in \Om$ operations and $x_{11},\dots,x_{n1},\dots,x_{1m},\dots,x_{nm}\in A$
\begin{eqnarray*}
\om(\varphi(x_{11},\dots,x_{n1}),\dots,\varphi(x_{1m},\dots,x_{nm}))
=\\
\varphi(\om(x_{11},\dots,x_{1m}),\dots,\om(x_{n1},\dots,x_{nm})). 
\end{eqnarray*}
\begin{remark}
If there is a unit $1$ in an entropic algebra $(A,\Om)$ then the algebra is {\bf\emph{symmetric}}, i.e. for every $n$-ary operation $\om\in\Om$ and $x_1,\dots,x_n\in A$ the following identity holds:
\begin{eqnarray*}
\om(x_1,\ldots,x_n)=\om(x_{\pi(1)},\ldots,x_{\pi(n)}),
\end{eqnarray*}
for each permutation $\pi$  
of the set $\{1,\ldots ,n\}$.
\end{remark}

\begin{exm}{\bf Modals.}
A {\bf\emph{modal}} $(M,\Om,+)$ is a semilattice ordered algebra in which the algebra $(M,\Om)$ is idempotent and entropic.
  Examples of modals include
semilattice ordered semilattices (dissemilattices) and the algebra $(\mathbb{R},\underline{I}^0,max)$
defined on the set of real numbers, where $\underline{I}^0$ is the
set of the binary operations:
\[
\underline{p}:\mathbb{R}\times \mathbb{R}\rightarrow \mathbb{R}, \;
(x,y)\mapsto (1-p)x+py,
\] for each $p\in (0,1)\subset \mathbb{R}$.

Idempotent and entropic algebras (called {\bf\emph{modes}}) and also modals were introduced and investigated in detail by A.
Romanowska and J.D.H. Smith (\cite{RS85}-\cite{RS02}). In particular, they showed that for a given idempotent and entropic algebra $(M,\Om)$, the sets
$S_{>0}(M)$ of non-empty subalgebras and $P_{>0}(M)$ of finitely generated non-empty subalgebras under the complex operations $\om\in\Om$ and ordered by set-theoretic inclusion are modals. In case, we allow empty subalgebras, one obtains $0$-semilattice ordered modes: $(S(M),\Om,\cup,\emptyset)$ and $(P(M),\Om,\cup,\emptyset)$.

If a modal $(M,\Om,+)$ is entropic, then it is an example of a {\bf\emph{semilattice mode}}. Semilattice modes
were described by K. Kearnes in \cite{K95}.
\end{exm}

\section{Identities in SLO algebras}\label{sec:id}

As we will see in Section \ref{sec:free} extended power algebras of algebras and their $\Om$-reducts play a special role in the context of semilattice ordered algebras. Let us recall some fundamental results referring to such algebras.

Let $\vva\subseteq\typ$ be a variety of algebras and let
\[
\vva\Sigma:={\rm HSP}(\{(\mathcal{P}A,\Om)\mid (A,\Om)\in\vva\}), \; {\rm and}
\]
\[
\vva\Sigma_{>0}:={\rm HSP}(\{(\mathcal{P}_{>0}A,\Om)\mid (A,\Om)\in\vva\}).
\] Let us consider their subvarieties

\[
\vva\Sigma^{< \om}:={\rm HSP}(\{(\mathcal{P}^{< \om} A,\Om)\mid
(A,\Om)\in\vva\}), \; {\rm and}
\]
\[
\vva\Sigma^{< \om}_{>0}:={\rm HSP}(\{(\mathcal{P}_{>0}^{< \om}A,\Om)\mid
(A,\Om)\in\vva\})\]  of power algebras of finite subsets.

We call a term $t$ of the language of a variety $\vv$ {\bf\emph{linear}},
if every variable occurs in $t$ at most once. An identity $t\approx u$ is
called {\bf\emph{linear}}, if both terms $t$ and $u$ are linear.

Note
that the definition \eqref{sec3:eq1} of a complex operation extends
to each linear derived operation $t$:
\begin{equation}\label{sec1:eq1}
t(A_1,\dots,A_n):=\{t(a_1, \dots, a_n)\mid a_i \in A_i\}.
\end{equation}

Each non-linear term $t$ can be obtained from a linear one $t^*$ by
identification of some variables. Let
$t^*(x_{11},\ldots,x_{1k_1},\ldots,x_{m1},\ldots,x_{mk_m})$ be a
linear term such that
\begin{equation*}
t(x_1,\ldots,x_m)=t^*(\underbrace{x_{1},\ldots,x_{1}}_{k_1-times},\ldots,\underbrace{x_{m},\ldots,x_{m}}_{k_m-times}).
\end{equation*}
Then for any subsets $A_1,\dots,A_m$
\begin{align*}
&\{t(a_1, \dots, a_m)\mid a_i \in A_i\}\subseteq t(A_1,\dots,A_m)=\\
&\{t^*(a_{11}, \dots, a_{1k_1},\ldots,a_{m1},\ldots,a_{mk_m})\mid a_{ij} \in A_i\}=\\
&t^*(\underbrace{A_1,\ldots,A_1,}_{k_1-times}\ldots,\underbrace{A_m,\ldots,A_m}_{k_m-times}).
\end{align*}

G. Gr\"atzer and H. Lakser proved in \cite{GL88} that for any subvariety
$\vv\subseteq\mho$ the following result holds.
\begin{thm}\cite[Theorem 1]{GL88}\label{gl}
Let $\vva$ be a variety of algebras. The variety $\vva\Sigma_{>0}$
satisfies precisely those identities resulting through
identification of variables from the linear identities true in
$\vva$.
\end{thm}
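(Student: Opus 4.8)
Let $\Sigma$ be the set of identities obtained from the linear identities true in $\vva$ by identifying variables; the claim is that $\vva\Sigma_{>0}$ is exactly the variety axiomatized by $\Sigma$. For the inclusion asserting $\vva\Sigma_{>0}\models\Sigma$ (soundness) I would note that identities are preserved by H, S and P, so it suffices to check each generator $(\mathcal{P}_{>0}A,\Om)$ with $(A,\Om)\in\vva$. If $s\approx t$ is linear and holds in $\vva$, then by \eqref{sec1:eq1} and $\vva\models s\approx t$,
\[
s(A_1,\dots,A_n)=\{s(a_1,\dots,a_n)\mid a_i\in A_i\}=\{t(a_1,\dots,a_n)\mid a_i\in A_i\}=t(A_1,\dots,A_n),
\]
so $(\mathcal{P}_{>0}A,\Om)\models s\approx t$; and since the class of models of a single identity is closed under substitution, and an identification is just a substitution of variables for variables, $(\mathcal{P}_{>0}A,\Om)\models\Sigma$ too.

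For the converse — the substantial direction — suppose $p(x_1,\dots,x_n)\approx q(x_1,\dots,x_n)$ holds in $\vva\Sigma_{>0}$, and try to derive it from $\Sigma$. I would work in the complex algebra $(\mathcal{P}_{>0}F,\Om)$ of the free $\vva$-algebra $F=F_\vva(Y)$ over the ``typed copies'' $Y=\{x_i^m\mid 1\le i\le n,\ m\ge 1\}$, which lies in $\vva\Sigma_{>0}$, and set $A_i:=\{x_i^m\mid m\ge 1\}$. Writing $\hat p,\hat q$ for linearizations of $p,q$ and using the description of linear derived operations on subsets recalled before Theorem~\ref{gl}, the complex terms evaluate as $p(A_1,\dots,A_n)=\{\hat p(\bar v)\mid v^{(i,j)}\in A_i\}$ and similarly for $q$, and by hypothesis these two subsets of $F$ coincide. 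Choosing $\bar v_0$ that sends the $j$-th copy-slot of $x_i$ in $\hat p$ to the generator $x_i^j$ (distinct slots to distinct generators), the element $\hat p(\bar v_0)$ then belongs to the $q$-set, so $\hat p(\bar v_0)=\hat q(\bar w_0)$ in $F$ for some $\bar w_0$ with entries in the $A_i$. Read with the generators as variables this is an identity true in $\vva$ whose left-hand side $\hat p(\bar v_0)$ is a linear term, and the identification $x_i^m\mapsto x_i$ turns it into $p\approx q$.

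The hard part will be to arrange that the right-hand side $\hat q(\bar w_0)$ is linear as well, since a priori $\bar w_0$ may repeat a generator of some type, leaving the extracted identity only ``half-linear''. To remove the repetitions I would first replace the inputs to the inessential slots of $\hat q$ by fresh distinct generators, which does not alter the value $\hat q(\bar w_0)$, and then use the reverse inclusion $q(A_1,\dots,A_n)\subseteq p(A_1,\dots,A_n)$ — the same construction with the roles of $p$ and $q$ swapped — to show that $\vva\Sigma_{>0}\models p\approx q$ in fact forces the inputs to the essential slots of $\hat q$ to be mutually distinguishable, so that a genuinely linear identity of $\vva$, of which $p\approx q$ is an identification, results. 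I expect this bookkeeping with linearizations, essential and inessential variables, and the two-sided inclusion to be the only real obstacle; the rest is formal.
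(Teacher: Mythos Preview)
The paper does not prove this theorem at all; it is merely quoted from Gr\"atzer and Lakser \cite{GL88} and used as a black box. So there is no in-paper argument against which to compare your proposal.

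Judged on its own terms, your soundness direction is correct and standard. The strategy for the converse --- evaluating $p$ and $q$ at the ``typed generator'' sets $A_i=\{x_i^m\mid m\ge 1\}$ inside the complex algebra of the free $\vva$-algebra and then reading off an identity of $\vva$ from the resulting equality of subsets --- is exactly the idea behind the Gr\"atzer--Lakser proof. However, the step you flag as ``only bookkeeping'' is precisely where the substance lies, and your sketch does not actually close the gap. Passing to the reverse inclusion $q(A_1,\dots,A_n)\subseteq p(A_1,\dots,A_n)$ yields a \emph{second} half-linear identity (now linear on the $\hat q$-side and possibly nonlinear on the $\hat p$-side); it does not by itself force the entries of $\bar w_0$ at essential slots of $\hat q$ to be distinct, nor does it explain how the two half-linear identities combine into a single linear identity of $\vva$ whose identification is $p\approx q$. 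Replacing inessential-slot inputs by fresh generators is fine, but the essential-slot repetitions are the real obstruction and your argument for eliminating them is not there yet. You would need either the original argument from \cite{GL88} or a careful inductive/counting device to finish this direction.
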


\begin{cor}\cite{PZ12b}\label{gl1}
Let $\vva$ be a variety of algebras. The varieties $\vva\Sigma_{>0}$ and
$\vva\Sigma^{< \om}_{>0}$ coincide.
\end{cor}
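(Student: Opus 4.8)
The plan is to prove the two set-theoretic inclusions $\vva\Sigma^{<\om}_{>0}\subseteq\vva\Sigma_{>0}$ and $\vva\Sigma_{>0}\subseteq\vva\Sigma^{<\om}_{>0}$ separately; the first is immediate, and essentially all of the work goes into the second.

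For $\vva\Sigma^{<\om}_{>0}\subseteq\vva\Sigma_{>0}$ I would just recall that for every $(A,\Om)\in\vva$ the algebra $(\wpf A,\Om,\cup)$ of finite non-empty subsets is a subalgebra of $(\mathcal{P}_{>0}A,\Om,\cup)$, as noted right after the definition of extended power algebras. Hence each $\Om$-reduct $(\wpf A,\Om)$ lies in $\mathrm{S}\big(\{(\mathcal{P}_{>0}A,\Om)\mid(A,\Om)\in\vva\}\big)\subseteq\vva\Sigma_{>0}$, and since $\vva\Sigma_{>0}$ is a variety it is closed under $\mathrm{HSP}$; applying $\mathrm{HSP}$ to all those reducts yields exactly $\vva\Sigma^{<\om}_{>0}$.

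For the reverse inclusion the idea is that passing from finite-subset power algebras to all power algebras introduces no new identities. Since $\vva\Sigma_{>0}=\mathrm{HSP}\big(\{(\mathcal{P}_{>0}A,\Om)\mid(A,\Om)\in\vva\}\big)$, it is enough to show that every $\Om$-identity $p\approx q$ valid throughout $\vva\Sigma^{<\om}_{>0}$ already holds in each generator $(\mathcal{P}_{>0}A,\Om)$; then $\vva\Sigma_{>0}\subseteq\mathrm{Mod}\,\mathrm{Id}(\vva\Sigma^{<\om}_{>0})=\vva\Sigma^{<\om}_{>0}$. I would argue by contradiction. Suppose $p\approx q$, in the variables $x_1,\dots,x_m$, fails in some $(\mathcal{P}_{>0}A,\Om)$; then there are non-empty subsets $A_1,\dots,A_m\subseteq A$ and, after interchanging $p$ and $q$ if necessary, an element $a\in p(A_1,\dots,A_m)\setminus q(A_1,\dots,A_m)$. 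The key point is the \emph{finite character} of complex term operations, which is precisely what the linearisation computation preceding Theorem \ref{gl} records: choosing a linearisation $p^{*}$ of $p$, so that $p(A_1,\dots,A_m)=p^{*}(A_1,\dots,A_1,\dots,A_m,\dots,A_m)$, the element $a$ has the form $a=p^{*}(a_{11},\dots,a_{1k_1},\dots,a_{m1},\dots,a_{mk_m})$ with all $a_{ij}\in A_i$. Put $B_i:=\{a_{i1},\dots,a_{ik_i}\}$ (and $B_i:=\{c_i\}$ for an arbitrary $c_i\in A_i$ if $x_i$ does not occur in $p$); each $B_i$ is a finite non-empty subset of $A_i$, and $a\in p(B_1,\dots,B_m)$, while monotonicity \eqref{sec2:eq2} of the complex operations (extended to term operations) gives $q(B_1,\dots,B_m)\subseteq q(A_1,\dots,A_m)$, so $a\notin q(B_1,\dots,B_m)$. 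As $(\wpf A,\Om)$ is a subalgebra of $(\mathcal{P}_{>0}A,\Om)$, these term values computed inside $\wpf A$ agree with the ones above, so $p\approx q$ already fails in $(\wpf A,\Om)\in\vva\Sigma^{<\om}_{>0}$ --- contradiction.

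I do not anticipate a serious obstacle; the only points needing care are the bookkeeping behind "finite character" --- one must pass through a linearisation so that distinct occurrences of a repeated variable may be witnessed by distinct elements, and one must be sure that evaluating $p$ and $q$ on $B_1,\dots,B_m$ inside $(\wpf A,\Om)$ produces the same sets as inside $(\mathcal{P}_{>0}A,\Om)$, which is exactly the subalgebra property. One could instead try to route everything through Theorem \ref{gl}, but that would amount to re-running the Gr\"atzer--Lakser analysis in the finite-subset setting, so the direct argument above seems preferable.
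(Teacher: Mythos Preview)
Your argument is correct. The paper itself gives no proof of this corollary; it simply cites \cite{PZ12b} and positions the statement immediately after Theorem~\ref{gl}, implicitly suggesting that it be read as a consequence of the Gr\"atzer--Lakser characterisation: since $\vva\Sigma_{>0}$ is axiomatised precisely by the identities obtained from linear identities of $\vva$ by identification of variables, and since the same description visibly holds for $\vva\Sigma^{<\om}_{>0}$ (any violation of such an identity is witnessed on finitely many elements), the two equational theories---hence the two varieties---agree.

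Your route is different in packaging but not in substance: rather than invoking Theorem~\ref{gl} twice, you prove directly that $\mathrm{Id}(\vva\Sigma^{<\om}_{>0})\subseteq\mathrm{Id}(\vva\Sigma_{>0})$ by the finite-character/monotonicity argument, which is exactly the mechanism underlying the Gr\"atzer--Lakser proof. The advantage of your version is that it is self-contained and does not require knowing the full identity characterisation; the advantage of routing through Theorem~\ref{gl} is brevity once that theorem is on the table. Your dismissal of the latter as ``re-running the Gr\"atzer--Lakser analysis'' is a bit overstated: one only needs the easy observation that $\vva\Sigma^{<\om}_{>0}\supseteq\vva$ satisfies every linear identity of $\vva$ (hence every consequence of such), together with the inclusion $\vva\Sigma^{<\om}_{>0}\subseteq\vva\Sigma_{>0}$ you already proved.
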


An identity $t\approx u$ is called {\bf\emph{regular}}, if the set of variable symbols occurring in $t$ equals the set of variable symbols occurring in $u$.
The following results are analogs of above ones formulated for varieties of power algebras including the empty set.
\begin{thm}\cite[Theorem 2]{GL88}\label{gl2}
Let $\vva$ be a variety of algebras. The variety $\vva\Sigma$
satisfies precisely those regular identities resulting through
identification of variables from the linear identities true in
$\vva$.
\end{thm}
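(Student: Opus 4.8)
The plan is to mimic the proof of Theorem~\ref{gl} (Gr\"atzer--Lakser for $\vva\Sigma_{>0}$), adding the bookkeeping needed to track the empty set, and thereby reduce the claim about $\vva\Sigma$ to the already-established Theorem~\ref{gl2}. First I would fix notation: write $\emptyset_A$ for the empty subset of $(\mathcal PA,\Om,\cup,\emptyset)$, and note that for any $0\neq n$-ary $\om\in\Om$ the complex operation satisfies $\om(A_1,\dots,A_n)=\emptyset$ as soon as some $A_i=\emptyset$, so $\emptyset$ is an absorbing element for every $\Om$-operation and the least element of the semilattice $(\mathcal PA,\cup)$. Consequently, in $(\mathcal PA,\Om,\cup,\emptyset)$ the subset $\{\emptyset\}$ is a one-element subalgebra, and the map $X\mapsto X\cap\{\text{fixed point}\}$-type arguments let us relate a general member of $\vva\Sigma$ to members of $\vva\Sigma_{>0}$ by removing the bottom element.

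The soundness direction is the routine half: I would show every regular identity obtained by identifying variables in a linear identity of $\vva$ holds in each $(\mathcal PA,\Om,\cup,\emptyset)$, hence in $\vva\Sigma$. Linear identities of $\vva$ lift verbatim to the complex operations by \eqref{sec1:eq1}; identifying variables in the linear identity $t^*\approx u^*$ to get $t\approx u$, the displayed inclusion/equality computation in the excerpt shows $t(A_1,\dots,A_m)=t^*(A_1,\dots,A_1,\dots,A_m,\dots,A_m)$, and similarly for $u$, so $t\approx u$ holds on nonempty arguments; the empty-set cases hold because both sides collapse to $\emptyset$ exactly when the common variable set is witnessed by an empty argument --- this is precisely where \emph{regularity} is needed, so that $t$ and $u$ have the same variables and therefore become $\emptyset$ on the same tuples. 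Then closure of the class of models of a set of identities under $\mathrm{H}$, $\mathrm{S}$, $\mathrm{P}$ gives the conclusion for all of $\vva\Sigma$.

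For the completeness direction --- every identity of $\vva\Sigma$ is of the stated form --- I would argue in two steps. Step one: any identity $t\approx u$ true in $\vva\Sigma$ is regular, because the two-element algebra $(\mathcal P\{0\},\Om,\cup,\emptyset)=\{\emptyset,\{0\}\}$ (bottom absorbing, other element idempotent under all operations) separates terms with different variable sets --- evaluate the missing variable at $\{0\}$ and all present ones at $\emptyset$ on one side to force a difference. Step two: restrict to nonempty subsets. Given $t\approx u$ regular and true in $\vva\Sigma$, I claim it is already true in $\vva\Sigma_{>0}$; indeed $(\mathcal P_{>0}A,\Om,\cup)$ embeds, up to the relevant evaluations, into $(\mathcal PA,\Om,\cup,\emptyset)$ (it is literally the subreduct on nonempty subsets), so the identity descends, and then Theorem~\ref{gl} (or one may invoke the regular version directly) tells us $t\approx u$ results from identifying variables in a linear identity true in $\vva$. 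Combining with the regularity from step one yields exactly the asserted description.

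The main obstacle I anticipate is the careful handling of the empty set in the completeness argument: one must be sure that an identity holding on all of $(\mathcal PA,\Om,\cup,\emptyset)$ genuinely forces the corresponding identity on the nonempty-subset reducts (not merely on a quotient or on finite pieces), and conversely that reintroducing $\emptyset$ to a regular consequence of a linear identity never breaks it --- both hinge on the fact that every $\Om$-operation, and hence every term operation, sends a tuple to $\emptyset$ iff one of its \emph{occurring} coordinates is $\emptyset$, so regularity is exactly the hypothesis that makes the two sides agree on such tuples. A secondary point of care is that $\vva\Sigma$ is defined as an $\mathrm{HSP}$-class, so ``true in $\vva\Sigma$'' must be reduced to ``true in the generating power algebras,'' which is immediate since identities are preserved by $\mathrm H,\mathrm S,\mathrm P$ but still should be stated explicitly.
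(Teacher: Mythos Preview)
The paper does not prove this statement at all: Theorem~\ref{gl2} is quoted verbatim from Gr\"atzer--Lakser \cite[Theorem~2]{GL88} and is stated without proof, just as Theorem~\ref{gl} is. So there is no ``paper's own proof'' to compare your proposal against.

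That said, your outline is essentially the standard reduction and is correct in spirit. Two small points worth tightening. First, in your regularity step the assignment is stated backwards: to separate $t$ from $u$ when some variable $x$ occurs in $t$ but not in $u$, you should send $x$ to $\emptyset$ and all other variables to the nonempty element $\{0\}$ of $\mathcal P\{0\}$; then $t$ evaluates to $\emptyset$ while $u$ evaluates to $\{0\}$. Second, you conflate the signature: $\vva\Sigma$ is by definition $\mathrm{HSP}(\{(\mathcal PA,\Om)\mid (A,\Om)\in\vva\})$, a variety in the type $\Om$ only, so $\cup$ and $\emptyset$ are not part of the language here and you should phrase the argument purely in terms of $\Om$-identities and the $\Om$-subalgebra $(\mathcal P_{>0}A,\Om)\leq(\mathcal PA,\Om)$ (which is legitimate since $\tau\colon\Om\to\mathbb N^+$ excludes constants, so nonempty subsets are closed under all complex operations). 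With those fixes your two-step completeness argument --- regularity via the power set of a one-element $\vva$-algebra, then descent to $\vva\Sigma_{>0}$ and invocation of Theorem~\ref{gl} --- goes through, and the soundness direction is exactly the observation that a term operation on $\mathcal PA$ returns $\emptyset$ iff some \emph{occurring} argument is $\emptyset$, so regularity is precisely what synchronises the two sides on tuples containing $\emptyset$.
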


\begin{cor}
Let $\vva$ be a variety of algebras. The varieties $\vva\Sigma$ and
$\vva\Sigma^{< \om}$ coincide.
\end{cor}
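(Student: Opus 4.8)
The plan is to establish the two inclusions separately. The inclusion $\vva\Sigma^{<\om}\subseteq\vva\Sigma$ is immediate: for each $(A,\Om)\in\vva$ the set $\mathcal{P}^{<\om}A$ of finite subsets of $A$ is closed under the complex operations (a complex operation applied to finite sets returns a finite set, and it returns $\emptyset$ as soon as one argument is empty), hence $(\mathcal{P}^{<\om}A,\Om)$ is an $\Om$-subalgebra of $(\mathcal{P}A,\Om)$, and applying $\mathrm{HSP}$ to the two generating classes preserves the containment. Thus everything reduces to proving $\vva\Sigma\subseteq\vva\Sigma^{<\om}$, which by Birkhoff's theorem means: every $\Om$-identity $p\approx q$ holding throughout $\vva\Sigma^{<\om}$ already holds throughout $\vva\Sigma$. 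Since $\vva\Sigma$ is generated by the algebras $(\mathcal{P}A,\Om)$ with $(A,\Om)\in\vva$, it suffices to verify $p\approx q$ in each of these.

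The main step will be a \emph{finite approximation} lemma for complex operations, in the spirit of the computation displayed just before Theorem \ref{gl}. Fix $(A,\Om)\in\vva$, a term $p(x_1,\dots,x_n)$, and subsets $B_1,\dots,B_n\subseteq A$ (empty ones allowed). Writing $p=p^*(\underbrace{x_1,\dots,x_1}_{k_1},\dots,\underbrace{x_n,\dots,x_n}_{k_n})$ with $p^*$ linear and using \eqref{sec1:eq1}, one checks that, in $(\mathcal{P}A,\Om)$,
\[
p(B_1,\dots,B_n)=\bigcup\{\,p(C_1,\dots,C_n)\mid C_i\subseteq B_i,\ \left|C_i\right|<\om\,\}.
\]
The inclusion $\supseteq$ is just monotonicity of the complex operations, valid also when some $C_i$ is empty. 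For $\subseteq$: every element of the left side is of the form $p^*(a_{11},\dots,a_{nk_n})$ with $a_{ij}\in B_i$, hence lies in $p(C_1,\dots,C_n)$ for $C_i=\{a_{i1},\dots,a_{ik_i}\}$ (and $C_i=\emptyset$ if $x_i$ does not occur in $p$); while if some $B_i$ is empty and $x_i$ occurs in $p$, both sides are $\emptyset$ and there is nothing to prove.

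Finally, assuming $p\approx q$ holds throughout $\vva\Sigma^{<\om}$, it holds in particular in $(\mathcal{P}^{<\om}A,\Om)$, so $p(C_1,\dots,C_n)=q(C_1,\dots,C_n)$ for every tuple of finite subsets $C_i\subseteq A$ (the complex evaluations being the same whether performed in $\mathcal{P}^{<\om}A$ or in $\mathcal{P}A$). Substituting this, term by term, into the displayed union yields $p(B_1,\dots,B_n)=q(B_1,\dots,B_n)$ for all $B_i\subseteq A$, i.e.\ $p\approx q$ holds in $(\mathcal{P}A,\Om)$, completing the argument. I expect the only delicate point to be the bookkeeping when some $B_i$ is empty and a variable fails to occur on one side of $p\approx q$: one first observes, exactly as for regular identities in Theorem \ref{gl2}, that an identity valid in $\vva\Sigma^{<\om}$ must involve the same variables on both sides (test it in $(\mathcal{P}^{<\om}A,\Om)$ on singletons together with $\emptyset$), after which the empty-argument cases become symmetric. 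Apart from this, the proof runs exactly parallel to that of Corollary \ref{gl1} in \cite{PZ12b}, the empty set causing no extra trouble because it is itself finite.
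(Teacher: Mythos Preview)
Your proof is correct. The paper gives no proof for this corollary, presenting it as the analog of Corollary~\ref{gl1} (proved in \cite{PZ12b}) obtained from Theorem~\ref{gl2} in place of Theorem~\ref{gl}; your finite-approximation argument is exactly the natural way to flesh this out, and matches what one expects from the cited reference.

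One minor remark: the regularity aside at the end, while true, is not actually needed for your argument to go through. Your approximation formula
\[
p(B_1,\dots,B_n)=\bigcup\{\,p(C_1,\dots,C_n)\mid C_i\subseteq B_i,\ |C_i|<\om\,\}
\]
holds for every term $p$ and every tuple $(B_1,\dots,B_n)$, including tuples with empty coordinates, regardless of which variables occur in $p$; the case analysis you give already establishes this. Since the index set $\{(C_1,\dots,C_n): C_i\subseteq B_i,\ |C_i|<\om\}$ is the \emph{same} for both $p$ and $q$, the termwise substitution $p(C_1,\dots,C_n)=q(C_1,\dots,C_n)$ immediately gives equality of the two unions, with no need to match up variable sets first. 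So you may simply drop the last paragraph's caveat.
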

\begin{cor}\cite[Corollary 2]{GL88},\cite[Theorem 4.6]{PZ15}\label{gl3}
Let $\vva$ be a variety of algebras. Then $\vva=\vva\Sigma^{< \om}_{>0}$ if and only if $\vva$ is defined by a set of linear identities, and $\vva=\vva\Sigma^{< \om}$ if and only if $\vva$ is defined by a set of linear regular identities.
\end{cor}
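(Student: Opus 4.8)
The plan is to reduce everything to the varieties $\vva\Sigma_{>0}$ and $\vva\Sigma$, using the coincidences $\vva\Sigma^{<\om}_{>0}=\vva\Sigma_{>0}$ (Corollary \ref{gl1}) and $\vva\Sigma^{<\om}=\vva\Sigma$ (the preceding corollary), and then to combine Theorems \ref{gl} and \ref{gl2} with one elementary embedding. First I would record that for every $(A,\Om)\in\vva$ the assignment $a\mapsto\{a\}$ is an embedding of $(A,\Om)$ into $(\mathcal{P}_{>0}A,\Om)$ and into $(\mathcal{P}A,\Om)$, since by \eqref{sec3:eq1} one has $\om(\{a_1\},\dots,\{a_n\})=\{\om(a_1,\dots,a_n)\}$; hence $\vva\subseteq\vva\Sigma_{>0}$ and $\vva\subseteq\vva\Sigma$ in all cases. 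I would also note the (well known and easy) fact that $\vva\Sigma$ satisfies only \emph{regular} identities: given a non-regular identity $t\approx u$, pick a variable occurring on exactly one side, say in $t$ only, and evaluate in some nontrivial $\mathcal{P}A$ by assigning $\emptyset$ to that variable and nonempty sets to the rest; then $t$ takes the value $\emptyset$ while $u$ does not, so the identity fails in $\vva\Sigma$.

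For the direction ``$\Leftarrow$'': suppose $\vva$ is defined by a set $\Lambda$ of linear identities. Each member of $\Lambda$ is a linear identity true in $\vva$ and results from itself by the trivial identification of variables, so by Theorem \ref{gl} it holds in $\vva\Sigma_{>0}$; hence $\vva\Sigma_{>0}\subseteq{\rm Mod}(\Lambda)=\vva$, and with $\vva\subseteq\vva\Sigma_{>0}$ we get $\vva=\vva\Sigma_{>0}=\vva\Sigma^{<\om}_{>0}$. If moreover $\Lambda$ consists of linear \emph{regular} identities, the same argument with Theorem \ref{gl2} in place of Theorem \ref{gl} gives $\vva=\vva\Sigma=\vva\Sigma^{<\om}$.

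For the direction ``$\Rightarrow$'': suppose $\vva=\vva\Sigma^{<\om}_{>0}=\vva\Sigma_{>0}$, and let $\Lambda$ be the set of \emph{all} linear identities true in $\vva$. By Theorem \ref{gl}, every identity satisfied by $\vva\Sigma_{>0}$ is obtained from some member of $\Lambda$ by identification of variables; since identification of variables is a particular substitution, such an identity is an equational consequence of $\Lambda$. Therefore ${\rm Mod}(\Lambda)\subseteq{\rm Mod}\big({\rm Id}(\vva\Sigma_{>0})\big)=\vva\Sigma_{>0}=\vva$, while $\vva\subseteq{\rm Mod}(\Lambda)$ trivially; so $\vva$ is defined by the set $\Lambda$ of linear identities. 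For the regular statement, assume instead $\vva=\vva\Sigma^{<\om}=\vva\Sigma$; by the observation above every identity of $\vva$ is regular, so in particular every member of $\Lambda$ is a linear regular identity, and the identical computation (now invoking Theorem \ref{gl2}) gives $\vva={\rm Mod}(\Lambda)$, defined by a set of linear regular identities.

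The argument is essentially a formal manipulation of equational theories, so I do not expect a serious obstacle. The two points that need a moment's care are the ones isolated above: that ``identification of variables'' produces genuine equational consequences of the linear identities of $\vva$ (which is what lets Theorems \ref{gl} and \ref{gl2} be turned into the inclusion ${\rm Mod}(\Lambda)\subseteq\vva$), and, for the regular case, the remark that $\vva\Sigma$ admits no non-regular identity, which is exactly what upgrades ``defined by linear identities'' to ``defined by linear regular identities''.
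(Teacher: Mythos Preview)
Your argument is correct. The paper does not supply its own proof of this corollary: it is stated as a direct consequence of \cite[Corollary~2]{GL88} and \cite[Theorem~4.6]{PZ15}, so there is no in-text proof to compare against. Your derivation from Theorems~\ref{gl} and~\ref{gl2} via the coincidences $\vva\Sigma^{<\om}_{>0}=\vva\Sigma_{>0}$ and $\vva\Sigma^{<\om}=\vva\Sigma$, together with the singleton embedding $a\mapsto\{a\}$ and the $\emptyset$-evaluation trick for regularity, is exactly the intended route and is how the cited sources obtain the result. One cosmetic remark: your phrase ``some nontrivial $\mathcal{P}A$'' is harmless, since even for a one-element $A$ the algebra $\mathcal{P}A$ has two elements and the $\emptyset$-evaluation still separates the two sides of a non-regular identity; thus no non-triviality hypothesis on $\vva$ is needed.
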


Let $(A,\Gamma)$ be an algebra of a given type $\tau:\Gamma
\rightarrow \mathbb{N}$. Denote by $\mathfrak{B}\Gamma$ a set of
derived (or term) operations of $\Gamma$ and let $\Om\subseteq
\mathfrak{B}\Gamma$. An algebra $(A,\Om)$ is said to be a
\emph{reduct} (\emph{$\Om$-reduct}) of the algebra $(A,\Gamma)$. A
subalgebra of a reduct of $(A,\Gamma)$ is called a \emph{subreduct}.

Let $(A,\Om,+)$ be a semilattice ordered algebra generated by a set $X\subseteq A$. Denote by
$(\langle X\rangle_{\Om},\Om)$ the subalgebra of the $\Om$-reduct
$(A,\Om)$ generated by the set $X$. The algebra $(\langle
X\rangle_{\Om},\Om)$ 
contains all elements from $(A,\Om,+)$ obtained as results of \emph{derived} (or \emph{term})
operations from $\Om$ on the set $X$. We will call it
the \emph{full $\Om$-algebra subreduct $($of a semilattice ordered
algebra $(A,\Om,+)$$)$ relative to $X$}.

An element $r\in A$  is said to be in {\bf\emph{disjunctive form}} if it is
a join of a finite number of elements from their full $\Om$-subreduct $\langle X\rangle_{\Om}$.

The following theorem shows that each element in a semilattice ordered algebra may be
expressed in such form.

\begin{lm}{\rm (Disjunctive Form Lemma).}\label{slowa}
Let $(A,\Om,+)$ be a semilattice ordered algebra generated by a set $X\subseteq A$. For
each $r\in A$, there exist $r_1,\ldots,r_p\in \langle
X\rangle_{\Om}$ such that
\[r=r_1+\ldots+r_p.
\]
\end{lm}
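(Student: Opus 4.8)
The plan is to induct on the structure of the term that produces $r$. Since $(A,\Om,+)$ is generated by $X$, every element $r\in A$ is the value $s(x_1,\dots,x_k)$ of some term $s$ in the combined language $\Om\cup\{+\}$ evaluated at elements of $X$. I would argue by induction on the complexity (number of operation symbols) of $s$. If $s$ is a variable, then $r\in X\subseteq\langle X\rangle_{\Om}$, so $p=1$ works. If the outermost symbol of $s$ is $+$, say $r=r'+r''$ where $r'$ and $r''$ are values of shorter terms, then by the induction hypothesis $r'=r_1+\dots+r_a$ and $r''=r_{a+1}+\dots+r_{a+b}$ with all $r_i\in\langle X\rangle_{\Om}$, and concatenating these joins expresses $r$ in the required form.

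The remaining case is when the outermost symbol of $s$ is some $n$-ary $\om\in\Om$, so $r=\om(t_1,\dots,t_n)$ where each $t_j$ is the value of a shorter term. By the induction hypothesis each $t_j=\sum_{i=1}^{p_j} t_{ji}$ with $t_{ji}\in\langle X\rangle_{\Om}$. Now I use distributivity of $\om$ over $+$: applying the distributive law in each coordinate successively (i.e. expanding the first argument, then the second, and so on), we obtain
\[
r=\om\Bigl(\sum_{i}t_{1i},\dots,\sum_{i}t_{ni}\Bigr)=\sum_{(i_1,\dots,i_n)}\om(t_{1i_1},t_{2i_2},\dots,t_{ni_n}),
\]
a finite join indexed by the product set $\{1,\dots,p_1\}\times\cdots\times\{1,\dots,p_n\}$. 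Each summand $\om(t_{1i_1},\dots,t_{ni_n})$ lies in $\langle X\rangle_{\Om}$ because every $t_{ji_j}\in\langle X\rangle_{\Om}$ and $\langle X\rangle_{\Om}$ is closed under $\om$. Hence $r$ is a finite join of elements of $\langle X\rangle_{\Om}$, completing the induction.

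The only subtlety — and the step I would write out most carefully — is the coordinatewise expansion in the $\Om$-case: Definition \ref{sec2:def1} only gives distributivity in a single argument at a time for a binary split $x_i+y_i$, so one needs an auxiliary induction (on the arity $n$, or equivalently on the number of arguments that are nontrivial joins) to justify the full multilinear expansion into a join over the product index set, and a further trivial induction to pass from a two-term join $x_i+y_i$ to a $p_i$-term join $\sum t_{ji}$ in a single coordinate. These are routine but worth stating as a preliminary observation; indeed, inequality \eqref{sec2:eq3} already records the "$\leq$" direction of exactly this expansion, and in fact equality holds and is what we use here. No finiteness issue arises since each $p_j$ is finite, so the product index set is finite and the resulting join is a legitimate semilattice term.
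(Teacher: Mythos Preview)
Your proof is correct and follows essentially the same approach as the paper's: a structural induction on a term representing $r$, using distributivity to push $\om$ past $+$. The only organizational difference is that the paper inducts on the number of occurrences of $+$ (so the base case is already an arbitrary $\Om$-term) and in the $\om$-case peels off a single $+$ at a time via one application of distributivity, whereas you induct on total term complexity and perform the full multilinear expansion after applying the hypothesis to every argument; your version requires the auxiliary expansion lemma you flagged, while the paper's avoids it but is arguably less explicit.
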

\begin{proof}
The proof goes by induction on the minimal number $m$ of occurrences
of the semilattice operation $+$ in the expression of $r$ as a semilattice ordered algebra
word in the alphabet $X$.

Consider $r=r_1$ with $r_1\in \langle X\rangle_{\Om}$. Hence, the
result holds for $m=0$.

Now suppose that the hypothesis is established for $m>0$ and let
$r\in A$ be an element in which the semilattice operation $+$ occurs
$m+1$ times. Let $r=r_1+r_2$, for some $r_1,r_2\in A$. By
induction hypothesis there are $r_{11},\ldots,r_{1k},
r_{21},\ldots,r_{2n}\in \langle X\rangle_{\Om}$ such that
$$r=r_1+r_2=r_{11}+\ldots+r_{1k}+r_{21}+\ldots+r_{2n}.$$
Otherwise, $r=\om(r_1,\ldots, r_k+s_k,\ldots,r_n)$
for some $\om\in \Om$ and $r_1,\ldots, r_k, \ldots,r_n,s_k\in A$. Then, by
distributivity we have
$$r=\om(r_1,\ldots, r_k+s_k,\ldots,r_n)=\om(r_1,\ldots,r_k,\ldots,
r_n)+\om(r_1,\ldots,s_k,\ldots, r_n).$$ Because
$\om(r_1,\ldots,r_k,\ldots, r_n),\om(r_1,\ldots,s_k,\ldots, r_n)\in
A$, this completes the inductive proof.
\end{proof}

\begin{cor}\label{l6} Let $(A,\Om,+)$ be a semilattice ordered algebra generated by a
set $X\subseteq A$. There is a set $Y\subseteq A$ of generators of
the semilattice $(A,+)$ such that $Y\subseteq \langle
X\rangle_{\Om}$.
\end{cor}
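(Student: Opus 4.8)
The plan is to derive Corollary~\ref{l6} directly from the Disjunctive Form Lemma (Lemma~\ref{slowa}). Since $(A,\Om,+)$ is generated as a semilattice ordered algebra by $X$, every element of $A$ arises from $X$ using both the $\Om$-operations and the semilattice operation $+$. By Lemma~\ref{slowa}, each $r\in A$ can be written as $r=r_1+\ldots+r_p$ with all $r_i\in\langle X\rangle_\Om$. The natural candidate is therefore to take $Y:=\langle X\rangle_\Om$ itself (or any subset of it that suffices), and check that $Y$ generates $(A,+)$ as a semilattice.

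First I would set $Y:=\langle X\rangle_\Om\subseteq A$. Then trivially $Y\subseteq\langle X\rangle_\Om$, which is one of the two required conclusions. Second, I would verify that $(A,+)$ is generated by $Y$: given any $r\in A$, Lemma~\ref{slowa} provides $r_1,\ldots,r_p\in\langle X\rangle_\Om=Y$ with $r=r_1+\ldots+r_p$, so $r$ lies in the subsemilattice of $(A,+)$ generated by $Y$. Since $r$ was arbitrary, $Y$ generates the whole semilattice $(A,+)$. This completes the argument.

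There is essentially no obstacle here; the corollary is an immediate repackaging of the lemma, the only (trivial) point being to observe that $\langle X\rangle_\Om$, being closed under the $\Om$-operations and containing $X$, is a legitimate subset of $A$ to use as the generating set $Y$. If one wants a more economical $Y$ than all of $\langle X\rangle_\Om$, one could instead let $Y$ be the (possibly infinite) set of all elements of $\langle X\rangle_\Om$ that actually appear as summands $r_i$ across all the disjunctive-form decompositions of elements of $A$, but this refinement is unnecessary for the statement as given. I would present the short proof with $Y=\langle X\rangle_\Om$.

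\begin{proof}
Put $Y:=\langle X\rangle_{\Om}\subseteq A$. Clearly $Y\subseteq \langle X\rangle_{\Om}$. Let $r\in A$. By Lemma~\ref{slowa} there exist $r_1,\ldots,r_p\in \langle X\rangle_{\Om}=Y$ with $r=r_1+\ldots+r_p$. Hence $r$ belongs to the subsemilattice of $(A,+)$ generated by $Y$. Since $r$ was arbitrary, $Y$ generates $(A,+)$.
\end{proof}
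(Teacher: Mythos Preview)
Your proof is correct and matches the paper's approach: the paper states Corollary~\ref{l6} without proof, as an immediate consequence of the Disjunctive Form Lemma, and your argument with $Y=\langle X\rangle_{\Om}$ is precisely the intended one-line derivation.
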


Let $\mathcal{SV}$ denote the class of all semilattice ordered
algebras such that for each $(A,\Om,+)\in\mathcal{SV}$ there exists
a set of generators such that their full $\Om$-subreduct 
lies in $\vv$.

\begin{thm}\label{linrow} Let $\vv$ be a variety of $\Om$-algebras
satisfying an identity $t\approx u$ for some $0\neq n$-ary terms and let $\mm\subseteq \mm\vv$  be a
variety of semilattice ordered algebras $(A,\Om,+)$ such that the word operation
$t:A^n\rightarrow A$ distributes over the operation $+$.

Then the identity $t\approx u$ is satisfied in $\mm$ if and only if the
word operation $u:A^n\rightarrow A$ distributes over the operation
$+$.
\end{thm}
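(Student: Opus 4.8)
The plan is to prove the two implications separately. The direction $(\Rightarrow)$ is immediate: if $\mm\models t\approx u$, then in each $(A,\Om,+)\in\mm$ the word operations $t,u\colon A^n\to A$ coincide, so since $t$ distributes over $+$ by the standing hypothesis on $\mm$, so does $u$. The substance is in the converse, and the tools are the Disjunctive Form Lemma (Lemma~\ref{slowa}) together with the hypothesis $\mm\subseteq\mm\vv$.

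For $(\Leftarrow)$ I would assume that $u\colon A^n\to A$ distributes over $+$ throughout $\mm$, fix $(A,\Om,+)\in\mm$ and $a_1,\dots,a_n\in A$, and aim at $t(a_1,\dots,a_n)=u(a_1,\dots,a_n)$. The inclusion $\mm\subseteq\mm\vv$ furnishes a generating set $X$ of $(A,\Om,+)$ whose full $\Om$-subreduct $(\langle X\rangle_{\Om},\Om)$ lies in $\vv$, and Lemma~\ref{slowa} then writes each argument as a finite join $a_i=a_{i1}+\dots+a_{ip_i}$ with all $a_{ij}\in\langle X\rangle_{\Om}$. Distributing $t$ in each of its $n$ argument places gives
\[
t(a_1,\dots,a_n)=\sum_{j_1=1}^{p_1}\cdots\sum_{j_n=1}^{p_n}t(a_{1j_1},\dots,a_{nj_n}),
\]
and the analogous expansion holds for $u$. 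For every index tuple the entries $a_{1j_1},\dots,a_{nj_n}$ lie in the subalgebra $\langle X\rangle_{\Om}$ of $(A,\Om)$, which is in $\vv$ and hence satisfies $t\approx u$, so the corresponding summands of the two expansions agree. Adding these finitely many equalities and folding the joins back up by distributivity of $u$ yields $t(a_1,\dots,a_n)=u(a_1,\dots,a_n)$; as $(A,\Om,+)\in\mm$ and $a_1,\dots,a_n$ were arbitrary, $\mm\models t\approx u$.

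I do not anticipate a hard step; the care is in the bookkeeping. First, ``$t$ (resp.\ $u$) distributes over $+$'' must be used as distributivity in each argument place separately, which is what legitimises the iterated expansion above — for a linear derived operation this holds automatically (compare the extension \eqref{sec1:eq1} of complex operations), which is exactly why the hypothesis need only be imposed on the possibly non-linear terms $t$ and $u$. Second, all the joins above are finite and $(A,+)$ is a semilattice, so reordering and rebracketing them causes no trouble. The genuinely load-bearing point is the use of $\mm\subseteq\mm\vv$: membership in $\mm\vv$ only guarantees that the $\Om$-subreduct relative to \emph{some} generating set lies in $\vv$, not that the $\Om$-reduct on all of $A$ does — so $t\approx u$ is not automatic in $\mm$ — and it is Lemma~\ref{slowa} that lets one pass down to that good generating set and back up.
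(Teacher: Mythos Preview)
Your proof is correct and follows essentially the same route as the paper: the ``obvious'' direction is handled identically, and for the converse you invoke the same two ingredients in the same order---the generating set $X$ with $(\langle X\rangle_{\Om},\Om)\in\vv$ supplied by the hypothesis $\mm\subseteq\mm\vv$, and the Disjunctive Form Lemma to reduce arbitrary arguments to finite joins of elements of $\langle X\rangle_{\Om}$, after which distributivity of both $t$ and $u$ collapses the computation to the identity on $\langle X\rangle_{\Om}$. Your write-up is somewhat more explicit about the iterated expansion over all index tuples, but there is no substantive difference from the paper's argument.
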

\begin{proof} Let $(A,\Om,+)\in \mm\subseteq \mm\vv$ and let the word operation $t:A^n\rightarrow A$ distribute over the operation $+$.

Because the variety $\mm$ is, by assumption, included in $\mm\vv$,
there exists a set $X$ of  generators of $(A,\Om,+)$, such that its
full $\Om$-algebra subreduct relative to $X$ belongs to the variety
$\vv$. Hence, the identity $t\approx u$ is also true in $(\langle
X\rangle_{\Om},\Om)$.

Suppose first that the word operation $u:A^n\rightarrow A$
distribute over the operation $+$ and let $r_1,\ldots,r_n\in A$. By
the Disjunction Form Lemma \ref{slowa} there exist
$r_{11},\ldots,r_{1k_1},\dots,r_{n1},\ldots,r_{nk_n}\in \langle
X\rangle_{\Om}$ such that for each $1\leq i \leq n$,
$r_i=r_{i1}+\ldots+r_{ik_i}$. Then, by distributivity of
operations $t:A^n\rightarrow A$ and $u:A^n\rightarrow A$ we obtain
\begin{align*}
&t(r_1,\ldots,r_n)=t(r_{11}+\ldots+r_{1k_1},\dots,r_{n1}+\ldots+r_{nk_n})=\\
&\sum\limits_{1\leq i \leq n \atop a_i\in
\{r_{i1},\ldots,r_{ik_i}\}}t(a_1,\ldots,a_n)=\sum\limits_{1\leq i
\leq n \atop a_i\in
\{r_{i1},\ldots,r_{ik_i}\}}u(a_1,\ldots,a_n)=\\
&u(r_{11}+\ldots+r_{1k_1},\dots,r_{n1}+\ldots+r_{nk_n})=u(r_1,\ldots,r_n).
\end{align*}

The converse implication is obvious.
\end{proof}

\begin{exm}
Let $(A,\Om,+)$ be a semilattice ordered algebra and let $\om\in\Om$ be an $n$-ary operation.
The unary operation $t(x):=\om(x,\ldots,x)\colon A\rightarrow A$ distributes over
the operation $+$ if and only if for any $x,y\in A$
\begin{align*}
&t(x)+t(y)=\sum_{x_i\in \{x,y\}}\om(x_1,\ldots,x_n).
\end{align*}
In particular, if $\om\in \Om$ is a binary idempotent operation, then  the operation $t(x)=\om(x,x)\colon A\rightarrow A$ distributes over
the operation $+$ if and only if for any $x,y\in A$
\begin{align*}
&x+y=x+y+\om(x,y)+\om(y,x).
\end{align*}
\end{exm}
\begin{lm}\label{ll} Let $(A,\Om,+)$ be a semilattice ordered algebra and let $t$ be an $0\neq n$-ary linear
$\Om$-term. The word operation $t:A^n\rightarrow A$ distributes over
the operation $+$.
\end{lm}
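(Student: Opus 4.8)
The plan is to prove Lemma \ref{ll} by induction on the structure of the linear term $t$. The base case is when $t$ is a single variable $t(x_1)=x_1$; then the word operation is the identity map, which trivially distributes over $+$. For the inductive step, write $t=\om(t_1,\dots,t_k)$ where $\om\in\Om$ is a $k$-ary operation and $t_1,\dots,t_k$ are linear $\Om$-terms whose variable sets are pairwise disjoint (this disjointness is exactly the content of linearity of $t$) and together exhaust $\{x_1,\dots,x_n\}$. By the induction hypothesis, each word operation $t_j$ distributes over $+$ in every coordinate it depends on.

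Now fix a coordinate $x_i$ in which we want to check distributivity of $t$, say $x_i=y+z$. Since the variable sets of the $t_j$ are disjoint, $x_i$ appears in exactly one of them, say in $t_\ell$. Then, holding all other arguments fixed, $t(\dots,y+z,\dots)=\om(t_1,\dots,t_\ell(\dots,y+z,\dots),\dots,t_k)$. By the induction hypothesis applied to $t_\ell$, we have $t_\ell(\dots,y+z,\dots)=t_\ell(\dots,y,\dots)+t_\ell(\dots,z,\dots)$. Finally, since $\om\in\Om$, by Definition \ref{sec2:def1} the operation $\om$ distributes over $+$ in its $\ell$-th argument, so
\[
\om(t_1,\dots,t_\ell(\dots,y,\dots)+t_\ell(\dots,z,\dots),\dots,t_k)
= t(\dots,y,\dots)+t(\dots,z,\dots),
\]
which is exactly what we want. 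Since the coordinate $x_i$ was arbitrary, $t$ distributes over $+$ in every argument.

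I expect no serious obstacle here; the only point requiring care is the bookkeeping of which subterm a given variable lands in, and this is precisely where linearity is used: if a variable occurred in two different $t_j$'s, the argument would break because substituting $y+z$ would change two arguments of $\om$ simultaneously, and single-coordinate distributivity of $\om$ would no longer suffice. It is also worth remarking explicitly (as the excerpt's discussion of \eqref{sec1:eq1} suggests) that this lemma is the term-level analogue of the fact that complex operations of linear terms are well-behaved, and that it is a direct generalization of inequality \eqref{sec2:eq3} to an equality in the linear case.
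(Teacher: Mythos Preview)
Your proof is correct and follows essentially the same structural induction as the paper's proof: decompose $t=\om(t_1,\dots,t_k)$ with the $t_j$ linear on disjoint variable sets, apply the induction hypothesis to the relevant $t_\ell$, and then invoke distributivity of the basic operation $\om$ in its $\ell$-th slot. The only cosmetic difference is that the paper takes as base case $m=1$ (a single basic operation symbol) and inducts on the number of occurrences of $\Om$-symbols, whereas you start from a bare variable; your choice is arguably cleaner since it avoids having to treat the case where some $t_j$ is a variable separately inside the inductive step.
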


\begin{proof}
The proof will go by induction on the minimal number $m$ of
occurrences of (symbols of) the basic $\Om$-operations in the
corresponding linear $\Om$-term.

By definition of a semilattice ordered algebra, the lemma is certainly true for $m=1$. Now
suppose that the hypothesis is established for $m>1$. Let
$$t(x_{11},\ldots,x_{kp_{k}})=\om(\nu_1(x_{11},\ldots,x_{1p_{1}}),\ldots,\nu_k(x_{k1},\ldots,x_{kp_{k}}))$$
be a linear $\Om$-term, for some $\om\in \Om$, different variable
symbols $x_{11},\ldots,x_{1p_{1}},\ldots,x_{k1},\ldots,x_{kp_{k}}$
and linear $\Om$-words $\nu_1,\dots,\nu_k$, in which the basic
$\Om$-operations occur $m+1$ times.

By induction hypothesis, the $\Om$-word operations $\nu_i:A^{p_i}\rightarrow A$, for $1\leq i \leq k$, distribute over the operation $+$. This implies that for any $x_{11},\ldots,x_{1p_{1}},\ldots,x_{i1},\ldots,x_{ij},y_{ij},\ldots,x_{ip_i},\\ \ldots,x_{k1},\ldots,x_{kp_{k}}\in A$,
\begin{align*}
&t(x_{11},\ldots,x_{ij}+y_{ij},\ldots,x_{kp_{k}})=\\
&\om(\nu_1(x_{11},\ldots,x_{1p_{1}}),\ldots,\nu_i(x_{i1},\ldots,x_{ij}+y_{ij},\ldots,x_{ip_{i}}),\ldots,\nu_k(x_{k1},\ldots,
x_{kp_{k}}))=\\
&\om(\nu_1(x_{11},\ldots,x_{1p_{1}}),\ldots,\nu_i(x_{i1},\ldots,x_{ij},\ldots,x_{ip_{i}})+ \\
&\nu_i(x_{i1},\ldots,y_{ij},\ldots, x_{ip_{i}}),\ldots, \nu_k(x_{k1},\ldots,x_{kp_{k}}))=\\
&\om(\nu_1(x_{11},\ldots,x_{1p_{1}}),\ldots,\nu_i(x_{i1},\ldots,x_{ij},\ldots,x_{ip_{i}}),\ldots,\nu_k(x_{k1},\ldots,x_{kp_{k}}))+ \\
&\om(\nu_1(x_{11},\ldots,x_{1p_{1}}),\ldots,\nu_i(x_{i1},\ldots,y_{ij},\ldots, x_{ip_{i}}),\ldots, \nu_k(x_{k1},\ldots,x_{kp_{k}}))=\\
&t(x_{11},\ldots,x_{ij},\ldots,x_{kp_{k}})+t(x_{11},\ldots,y_{ij},\ldots,x_{kp_{k}}),
\end{align*}
which finishes the proof.
\end{proof}

\begin{cor}\label{c5} Let $\vv$ be a variety of $\Om$-algebras satisfying
an identity $t\approx u$ for some $0\neq n$-ary terms, where $t$ is linear. The identity $t\approx u$ is true
in a variety $\mm\subseteq\mm\vv$ of semilattice ordered algebras if and only if the word
operation $u:A^n\rightarrow A$ distributes over the operation $+$.
\end{cor}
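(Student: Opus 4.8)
The plan is to derive Corollary \ref{c5} directly from Theorem \ref{linrow} by showing that the extra hypothesis in that theorem -- namely, that the word operation $t\colon A^n\to A$ distributes over $+$ -- is automatic when $t$ is a linear $\Om$-term. First I would fix a variety $\vv$ of $\Om$-algebras satisfying $t\approx u$ for some $0\neq n$-ary terms with $t$ linear, and a variety $\mm\subseteq\mm\vv$ of semilattice ordered algebras. I would then invoke Lemma \ref{ll}: since $t$ is a linear $\Om$-term, for every $(A,\Om,+)\in\mm$ the induced word operation $t\colon A^n\to A$ distributes over the semilattice operation $+$. Thus the standing hypothesis ``$t\colon A^n\to A$ distributes over $+$'' of Theorem \ref{linrow} is satisfied by $\mm$ for free.

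With that in hand, Theorem \ref{linrow} applies verbatim and yields that the identity $t\approx u$ holds in $\mm$ if and only if the word operation $u\colon A^n\to A$ distributes over $+$, which is exactly the claim. The forward direction needs one small observation: Theorem \ref{linrow} as stated gives the equivalence ``$t\approx u$ holds in $\mm$ $\iff$ $u$ distributes over $+$'' only after we know $t$ distributes over $+$, and Lemma \ref{ll} supplies precisely this for linear $t$; so there is nothing further to check. The converse direction ($u$ distributes $\Rightarrow$ $t\approx u$ holds) is the substantive half and is handled entirely inside Theorem \ref{linrow} via the Disjunctive Form Lemma argument.

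Since the whole content is the reduction, there is essentially no obstacle: the only thing to be careful about is to make sure the hypotheses of Theorem \ref{linrow} match -- in particular that ``$t\approx u$ for some $0\neq n$-ary terms'' is read with the same arity conventions, and that the linearity of $t$ is used solely to remove the distributivity hypothesis on $t$. The proof is therefore a two-line deduction, and I would write it as such.

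\begin{proof}
Since $t$ is a linear $\Om$-term, by Lemma \ref{ll} the word operation $t\colon A^n\to A$ distributes over the operation $+$ in every semilattice ordered algebra $(A,\Om,+)\in\mm$. Hence the hypothesis of Theorem \ref{linrow} is fulfilled, and that theorem gives that $t\approx u$ holds in $\mm$ if and only if the word operation $u\colon A^n\to A$ distributes over $+$.
\end{proof}
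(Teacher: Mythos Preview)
Your proof is correct and matches the paper's intended derivation: the corollary is stated without proof immediately after Lemma~\ref{ll}, and it is clear from context that it is obtained exactly as you do, by combining Lemma~\ref{ll} (linear terms distribute over $+$) with Theorem~\ref{linrow}.
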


\begin{cor}
A variety $\mm\subseteq\mm\vv$ of semilattice ordered algebras satisfies each linear
identity true in $\vv$.
\end{cor}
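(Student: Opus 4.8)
The plan is to read this off from the two preceding results. Lemma~\ref{ll} tells us that the word operation of \emph{every} linear $\Om$-term distributes over $+$ in an arbitrary semilattice ordered algebra, and Corollary~\ref{c5} (equivalently Theorem~\ref{linrow}) tells us that an identity $t\approx u$ holding in $\vv$ with $t$ linear is inherited by $\mm\subseteq\mm\vv$ precisely when the word operation $u$ distributes over $+$. So the present statement is just the case in which $u$, too, is linear.

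In detail, let $t\approx u$ be a linear identity valid in $\vv$; by definition both $t$ and $u$ are linear $\Om$-terms. First I would pass to a common arity: let $n$ be the number of variables occurring in $t$ or in $u$, and regard both sides as $n$-ary terms (adjoining to a side the variables it happens not to use). A linear term stays linear under this, and in a coordinate on which a word operation does not actually depend, distributivity over $+$ is nothing but the idempotence $a+a=a$ of the semilattice operation; so we may assume $t$ and $u$ are linear $0\neq n$-ary $\Om$-terms.

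Now apply Lemma~\ref{ll} twice. Since $t$ is linear, the word operation $t\colon A^{n}\to A$ distributes over $+$ in every $(A,\Om,+)\in\mm$; since $u$ is linear, the word operation $u\colon A^{n}\to A$ likewise distributes over $+$. The first fact puts us in the hypotheses of Theorem~\ref{linrow} (equivalently, lets us invoke Corollary~\ref{c5}) for the identity $t\approx u$ in the variety $\mm\subseteq\mm\vv$; the second fact is exactly the condition that makes the conclusion fire. Hence $t\approx u$ holds throughout $\mm$, as claimed. It then follows at once that the final corollary is also contained in the statement that $\mm\subseteq\mm\vv$ satisfies every linear identity of $\vv$.

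There is essentially no obstacle of substance; the only care needed is in the first step — making sure Lemma~\ref{ll} is applied to bona fide linear $\Om$-terms, in particular handling the degenerate situations where one side of the identity is a single variable or omits a variable appearing on the other side — together with observing that the ambient class $\mm$ is of the form demanded by Theorem~\ref{linrow}, namely $\mm\subseteq\mm\vv$, which is precisely the standing hypothesis. Beyond this bookkeeping the argument is a direct citation of Lemma~\ref{ll} and Corollary~\ref{c5}.
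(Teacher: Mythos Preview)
Your proposal is correct and follows exactly the approach implicit in the paper: the corollary is stated without proof because it is the immediate combination of Lemma~\ref{ll} (applied to the linear term $u$) with Corollary~\ref{c5}. Your additional bookkeeping about passing to a common arity and handling the degenerate cases (a bare variable, or a variable missing from one side) is more explicit than the paper, but it is the same argument.
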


\begin{cor}\label{c6} Let $\vv$ be a variety of $\Om$-algebras satisfying
an identity $\om(x,\ldots,x)=x$, for $\om\in \Om$. The identity $\om(x,\ldots,x)=x$ is true
in a variety $\mm\subseteq\mm\vv$ of semilattice ordered algebras if and only if
the following identity
\begin{align*}
&x+y=\sum_{x_i\in \{x,y\}}\om(x_1,\ldots,x_n)
\end{align*}
is true in $\mm$.
\end{cor}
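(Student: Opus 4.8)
The plan is to derive Corollary~\ref{c6} as a special case of Corollary~\ref{c5}. Observe that the identity $\om(x,\ldots,x)\approx x$ can be written as $t\approx u$ where $t(x_1,\ldots,x_n):=\om(x_1,\ldots,x_n)$ is the (linear) $\Om$-term in the variables $x_1,\ldots,x_n$, and $u$ is the derived unary term $u(x):=x$ obtained from $t$ by identifying all $n$ variables. By Lemma~\ref{ll}, the linear term $t$ distributes over $+$ in every semilattice ordered algebra, so the hypotheses of Corollary~\ref{c5} are met with this choice of $t$; hence $\om(x,\ldots,x)\approx x$ holds in $\mm$ if and only if the word operation $u\colon A^n\to A$ (i.e.\ $(x_1,\ldots,x_n)\mapsto$ the appropriate identified instance) distributes over $+$.

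The remaining point is to unwind what ``$u$ distributes over $+$'' means concretely, and to match it with the displayed identity. Here one must be slightly careful: the operation $u$ in Corollary~\ref{c5} is the $n$-ary word operation sending $(r_1,\ldots,r_n)$ to $u(r_1,\ldots,r_n)$, where $u$ as a term in $n$ variables is just the projection(s) forced by the identification — more precisely, since $t(x,\ldots,x)=\om(x,\ldots,x)=x$ in $\vv$, the relevant $n$-ary word operation on $(A,\Om,+)$ that we test for distributivity is $u(r_1,\ldots,r_n)$, and distributivity of this operation over $+$ in each coordinate, combined with the computation already carried out in the proof of Theorem~\ref{linrow}, gives
\begin{align*}
x+y=u(x,\ldots,x)+u(y,\ldots,y)=\sum_{x_i\in\{x,y\}}u(x_1,\ldots,x_n)=\sum_{x_i\in\{x,y\}}\om(x_1,\ldots,x_n),
\end{align*}
where the middle equality is exactly the telescoping expansion $u(r_1,\ldots,r_n)=u(\sum\ldots,\ldots,\sum\ldots)=\sum_{a_i}u(a_1,\ldots,a_n)$ applied with each $r_i\in\{x,y\}$ written as a join of singletons, and the last equality uses that $u$ and $\om$ agree as word operations (both equal $\om$) once we recall $t=\om$. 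Conversely, if the displayed identity holds in $\mm$, then the same expansion shows $u$ distributes over $+$, and Corollary~\ref{c5} yields $\om(x,\ldots,x)\approx x$ in $\mm$.

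In short, the proof reads: ``Apply Corollary~\ref{c5} with $t:=\om$ (linear by Lemma~\ref{ll}) and $u$ the unary identification $x$; the condition that $u$ distributes over $+$ is, after expanding each argument into a join and using distributivity of $t=\om$, precisely the identity $x+y\approx\sum_{x_i\in\{x,y\}}\om(x_1,\ldots,x_n)$.'' The only delicate step — and the one I would write out most carefully — is the bookkeeping identifying ``$u$ distributes over $+$'' with the displayed two-variable identity, i.e.\ verifying that the sum ranges exactly over all $n$-tuples with entries in $\{x,y\}$; this is a direct consequence of the multilinear expansion already established in Theorem~\ref{linrow}'s proof and in the preceding Example, so it is routine rather than a genuine obstacle.
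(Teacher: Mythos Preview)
Your overall plan—apply Corollary~\ref{c5} and then unwind what ``$u$ distributes over $+$'' means—is exactly the paper's intended route (Corollary~\ref{c6} is meant to follow from Corollary~\ref{c5} together with the Example that precedes Lemma~\ref{ll}). However, you have the roles of $t$ and $u$ reversed, and this produces the arity confusion that runs through the rest of the argument. In Theorem~\ref{linrow} and Corollary~\ref{c5} both $t$ and $u$ are $n$-ary terms in the \emph{same} variables; the idempotent law $\om(x,\ldots,x)\approx x$ is a \emph{unary} identity, so one must take $n=1$ with $t(x)=x$ (the linear side) and $u(x)=\om(x,\ldots,x)$ (the non-linear side). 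It is then this unary $u$ whose distributivity over $+$ is in question, and by the Example before Lemma~\ref{ll} that distributivity amounts to
\[
\om(x,\ldots,x)+\om(y,\ldots,y)=\sum_{x_i\in\{x,y\}}\om(x_1,\ldots,x_n).
\]
One further short step converts this into the form stated in Corollary~\ref{c6}: if the idempotent law holds, the left-hand side is $x+y$; conversely, if $x+y=\sum_{x_i\in\{x,y\}}\om(x_1,\ldots,x_n)$ holds, substituting $y=x$ immediately gives $\om(x,\ldots,x)=x$.

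In your version you set $t:=\om(x_1,\ldots,x_n)$ and $u(x):=x$, but then speak of $u\colon A^n\to A$ and later assert that ``$u$ and $\om$ agree as word operations''—so $u$ silently changes meaning from the identity map to $\om$ itself. The displayed chain $x+y=u(x,\ldots,x)+u(y,\ldots,y)=\cdots$ then already presupposes the idempotent law (to get $x=u(x,\ldots,x)$ when $u=\om$), making the forward direction circular. Swapping the roles of $t$ and $u$ as above removes all of this and the argument goes through cleanly.
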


Let $^{\cup}\vv\Sigma^{< \om}_{>0}$ denote the variety of semilattice ordered algebras generated by
extended power algebras of finite non-empty subsets of algebras from $\vv$, i.e.,
$$^{\cup}\vv\Sigma^{< \om}_{>0}:={\rm HSP}(\{(P_{>0}^{<\om}A,\Om,\cup)\mid (A,\Om)\in \vv\}).$$

\begin{thm}\label{thm13}
Let $\vv$ be a variety of $\Om$-algebras. The variety
$\vv\Sigma^{< \om}_{>0}$ is locally finite if and only if the variety
$^{\cup}\vv\Sigma^{< \om}_{>0}$ is locally finite.
\end{thm}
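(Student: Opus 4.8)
The theorem asserts an equivalence between local finiteness of two varieties: $\vv\Sigma^{<\om}_{>0}$, which is the variety of $\Om$-algebras generated by power algebras of finite non-empty subsets, and $^{\cup}\vv\Sigma^{<\om}_{>0}$, the variety of semilattice ordered algebras generated by the same power algebras equipped with $\cup$. The key structural fact, available from the Disjunctive Form Lemma (Lemma~\ref{slowa}) and Corollary~\ref{l6}, is that in any semilattice ordered algebra $(A,\Om,+)$ generated by a set $X$, every element is a finite join of elements of the full $\Om$-subreduct $\langle X\rangle_\Om$. This says that the free algebra in $^{\cup}\vv\Sigma^{<\om}_{>0}$ on $n$ generators is, as a semilattice, generated by the (image of the) free algebra on $n$ generators in $\vv\Sigma^{<\om}_{>0}$ — so its size is controlled by the number of \emph{finite subsets} of the latter, i.e. it is finite iff the free $\Om$-algebra on $n$ generators in $\vv\Sigma^{<\om}_{>0}$ is finite. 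That is the whole content, but one must be careful about which variety the $\Om$-subreduct actually lies in.

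First I would make precise the relationship between the free objects. Fix a finite set $X$ with $|X|=n$. Let $F_\Om$ denote the free algebra on $X$ in $\vv\Sigma^{<\om}_{>0}$ and let $F$ denote the free algebra on $X$ in $^{\cup}\vv\Sigma^{<\om}_{>0}$. The crucial point: the full $\Om$-subreduct of $F$ relative to $X$ is precisely (a homomorphic image of, and in fact isomorphic to) $F_\Om$. One direction is clear since $\langle X\rangle_\Om$ is an $\Om$-subreduct of a member of $^{\cup}\vv\Sigma^{<\om}_{>0}$, hence satisfies the $\Om$-identities of $\vv\Sigma^{<\om}_{>0}$ (note $^{\cup}\vv\Sigma^{<\om}_{>0}$ is itself of the form $\mm\vv'$ for $\vv'=\vv\Sigma^{<\om}_{>0}$, so Corollary~\ref{l6} applies and $\langle X\rangle_\Om\in\vv\Sigma^{<\om}_{>0}$); for the reverse, every $\Om$-identity of $\langle X\rangle_\Om$ must hold in $F_\Om$ because $\langle X\rangle_\Om$ embeds back appropriately via the universal property. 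Here I would lean on the established fact (Corollary~\ref{gl1}) that $\vv\Sigma_{>0}=\vv\Sigma^{<\om}_{>0}$ and on the description of free SLO algebras promised in Section~\ref{sec:free}, but for this proof the self-contained argument via Lemma~\ref{slowa} suffices.

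Next, the counting step. By Lemma~\ref{slowa}, every element of $F$ is a finite join of elements of $\langle X\rangle_\Om\cong F_\Om$, and two such joins are equal in $F$ iff the corresponding finite subsets of $F_\Om$ have the same "closure" under the semilattice and the absorption forced by distributivity — in any case, $|F|\le 2^{|F_\Om|}$ (the number of finite subsets of $F_\Om$ when $F_\Om$ is finite, and when $F_\Om$ is infinite $F$ is a fortiori infinite since it contains $\langle X\rangle_\Om$). Thus $F$ is finite iff $F_\Om$ is finite. Since this holds for every finite $X$, $^{\cup}\vv\Sigma^{<\om}_{>0}$ is locally finite iff $\vv\Sigma^{<\om}_{>0}$ is locally finite. (For the "only if" direction one also notes trivially that $F_\Om$, being an $\Om$-subreduct of the finitely generated $F$, is finite whenever $F$ is.)

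\textbf{Main obstacle.} The delicate point is the identification of the $\Om$-subreduct: showing that $\langle X\rangle_\Om$ inside the free SLO algebra $F$ is \emph{exactly} $F_\Om$ and not a proper quotient. The inclusion "$\langle X\rangle_\Om$ satisfies all identities of $\vv\Sigma^{<\om}_{>0}$" is easy; the converse — that no extra $\Om$-identities are forced by the ambient semilattice structure — requires knowing that the generating power algebras $(P^{<\om}_{>0}A,\Om,\cup)$ have $\Om$-reducts whose $\Om$-subreducts generated by singletons recover all of $\vv\Sigma^{<\om}_{>0}$, equivalently that for a free $\vv$-algebra $A$ the subreduct of $(P^{<\om}_{>0}A,\Om)$ generated by the singletons $\{x\}$, $x$ a free generator, is free in $\vv\Sigma^{<\om}_{>0}$ on those generators. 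This is essentially the content of the Gr\"atzer–Lakser theorem (Theorem~\ref{gl}) together with the free-object description of Section~\ref{sec:free}, so I would invoke those rather than reprove them. Once that identification is in hand, the equivalence of local finiteness is immediate from the $2^{|F_\Om|}$ bound.
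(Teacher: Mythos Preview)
Your approach is essentially the paper's: both directions rest on the Disjunctive Form Lemma and on relating the full $\Om$-subreduct $\langle X\rangle_\Om$ of the free algebra in $^{\cup}\vv\Sigma^{<\om}_{>0}$ to the free algebra $F_{\vv\Sigma^{<\om}_{>0}}(X)$. The only difference worth flagging is that your ``main obstacle''---establishing the isomorphism $\langle X\rangle_\Om\cong F_{\vv\Sigma^{<\om}_{>0}}(X)$---is stronger than what the argument actually requires. For the forward implication you only need $\langle X\rangle_\Om\in\vv\Sigma^{<\om}_{>0}$, so that it is a \emph{quotient} of $F_{\vv\Sigma^{<\om}_{>0}}(X)$ and hence finite when the latter is; for the backward implication the paper simply invokes \cite[Theorem~3.9]{PZ11} (cf.\ Corollary~\ref{L:free01}) to see that $\langle X\rangle_\Om$ is free in the variety generated by $\Om$-subreducts of algebras in $^{\cup}\vv\Sigma^{<\om}_{>0}$, and since that variety visibly contains $\vv\Sigma^{<\om}_{>0}$ one obtains a surjection $\langle X\rangle_\Om\twoheadrightarrow F_{\vv\Sigma^{<\om}_{>0}}(X)$, which is all that is needed. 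Your obstacle therefore dissolves once you ask only for this surjection rather than the full isomorphism.
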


\begin{proof}
Let $(C,\Om,\cup)\in ^{\cup}\vv\Sigma^{< \om}_{>0}$ be an algebra
generated by a finite set $X\subseteq C$. By Disjunctive Form Lemma
\ref{slowa}, for each $a\in C$, there exist $a_1,\ldots,a_p\in
\langle X\rangle_\Om$ such that
\begin{equation}\label{r4}
a=a_1\cup\ldots\cup a_p.
\end{equation}
If the variety $\vv\Sigma^{<\omega}_{>0}$ is locally finite, then the
algebra $\langle X\rangle_\Om \in \vv\Sigma^{<\omega}_{>0}$ is finite. Hence,
there are only finitely many elements of the form (\ref{r4}).
Consequently, the algebra $(C,\Om,\cup)$ is finite.

Let $(F_{^{\cup}\vv\Sigma^{<\omega}_{>0}}(X),\Om,\cup)$ be the free
algebra in the variety $^{\cup}\vv\Sigma^{< \om}_{>0}$ generated by a set $X$. It is known
that free algebra over $X$ in the variety generated by
$\Om$-subreducts of algebras in $^{\cup}\vv\Sigma^{< \om}_{>0}$ is
isomorphic to the $\Om$-subreduct $(\langle X\rangle,\Om)$,
generated by X, of the free algebra
$(F_{^{\cup}\vv\Sigma^{< \om}_{>0}}(X),\Om,\cup)$. (See e.g.
\cite[Theorem 3.9]{PZ11}). The free algebra
$(F_{\vv\Sigma^{<\omega}_{>0}}(X),\Om)$ is then a homomorphic image of
$(\langle X\rangle_\Om,\Om)$. Consequently, the variety
$\vv\Sigma^{<\omega}_{>0}$ is locally finite if the variety
$^{\cup}\vv\Sigma^{< \om}_{>0}$ is locally finite.
\end{proof}

Note that the same is true also for varieties generated by power algebras of all finite subsets (i.e. including the empty set).
\section{Free SLO algebras with constants}\label{sec:free}

Let $(F_{\vv}(X),\Om)$ be the free algebra over a set $X$ in
the variety $\vv\subseteq\typ$ and $(F_{\vv_1}(X),\Om)$ be the free algebra over a set $X$ in
the variety $\vv_1\subseteq\typ_1$.  Let $\mm_{\vv}$ denote the variety of all
semilattice ordered $\vv$-algebras, $\mm_{\vv}^{0}$ denote the variety of all
$0$-semilattice ordered $\vv$-algebras, $\mm_{\vv_1}$ denote the variety of all
semilattice ordered $\vv_1$-algebras and $\mm_{\vv_1}^0$ denote the variety of all
$0$-semilattice ordered $\vv_1$-algebras.

\begin{thm}[Universality Property for Semilattice Ordered Algebras]\label{thm:up1}\cite{PZ12a} 
Let $X$ be an arbitrary set and $(A,\Om,+)\in \mm_{\vv}$. Each mapping $h\colon X\to A$ can
be extended to a unique homomorphism
$\overline{\overline{h}}\colon(\wpf F_{\vv}(X),\Om,\cup)\to (A,\Om,+)$, such that
$\overline{\overline{h}}|_{X}=h$.
\end{thm}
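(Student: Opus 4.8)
The plan is to construct the extension $\overline{\overline{h}}$ explicitly using the Disjunctive Form Lemma. First I would recall that $(\wpf F_{\vv}(X),\Om,\cup)$ is a semilattice ordered algebra, generated (as a semilattice ordered algebra) by the singletons $\{\{x\}\mid x\in X\}$, and that its full $\Om$-subreduct relative to this generating set consists precisely of the singletons $\{t\}$ where $t$ ranges over the underlying set of $F_{\vv}(X)$; indeed $\om(\{t_1\},\dots,\{t_n\})=\{\om(t_1,\dots,t_n)\}$, so this subreduct is isomorphic to $(F_{\vv}(X),\Om)$. By the Disjunctive Form Lemma \ref{slowa}, every element of $\wpf F_{\vv}(X)$ is a finite union of singletons, which simply re-expresses the obvious fact that a finite non-empty set $\{t_1,\dots,t_p\}$ equals $\{t_1\}\cup\dots\cup\{t_p\}$.

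Next I would define the map. Given $h\colon X\to A$, since $(A,\Om)\in\vv$ and $(F_{\vv}(X),\Om)$ is free in $\vv$, there is a unique $\Om$-homomorphism $\overline{h}\colon F_{\vv}(X)\to A$ extending $h$. Then set
\[
\overline{\overline{h}}(\{t_1,\dots,t_p\}):=\overline{h}(t_1)+\dots+\overline{h}(t_p),
\]
i.e. $\overline{\overline{h}}(B)=\sum_{t\in B}\overline{h}(t)$ for each non-empty finite $B\subseteq F_{\vv}(X)$. This is well-defined because $(A,+)$ is a commutative idempotent semigroup, so the value depends only on the set $\{\overline{h}(t)\mid t\in B\}$, not on any listing of $B$. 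Clearly $\overline{\overline{h}}(\{x\})=h(x)$, so $\overline{\overline{h}}|_X=h$ under the identification of $x$ with $\{x\}$.

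Then I would verify $\overline{\overline{h}}$ is a homomorphism. For the semilattice operation, $\overline{\overline{h}}(B\cup C)=\sum_{t\in B\cup C}\overline{h}(t)=\sum_{t\in B}\overline{h}(t)+\sum_{t\in C}\overline{h}(t)=\overline{\overline{h}}(B)+\overline{\overline{h}}(C)$ by associativity, commutativity and idempotency of $+$. For an $n$-ary $\om\in\Om$ and finite non-empty $B_1,\dots,B_n$, the complex operation gives $\om(B_1,\dots,B_n)=\{\om(t_1,\dots,t_n)\mid t_i\in B_i\}$, so
\begin{align*}
\overline{\overline{h}}(\om(B_1,\dots,B_n))
&=\sum_{t_i\in B_i}\overline{h}(\om(t_1,\dots,t_n))
=\sum_{t_i\in B_i}\om(\overline{h}(t_1),\dots,\overline{h}(t_n))\\
&=\om\Bigl(\sum_{t_1\in B_1}\overline{h}(t_1),\dots,\sum_{t_n\in B_n}\overline{h}(t_n)\Bigr)
=\om(\overline{\overline{h}}(B_1),\dots,\overline{\overline{h}}(B_n)),
\end{align*}
where the first equality uses that $\overline h$ is an $\Om$-homomorphism, and the third uses that $\om$ distributes over $+$ in $(A,\Om,+)$ (applied repeatedly in each coordinate, as in \eqref{sec2:eq3}).

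Finally, uniqueness: any homomorphism $g$ extending $h$ must satisfy $g(\{x\})=h(x)=\overline{h}(x)$, and since the singletons generate $F_{\vv}(X)$ under $\Om$ and $g$ commutes with the $\Om$-operations, $g(\{t\})=\overline{h}(t)$ for all $t\in F_{\vv}(X)$; then since $g$ commutes with $\cup$ and every element is a finite union of singletons (Lemma \ref{slowa}), $g$ agrees with $\overline{\overline{h}}$ everywhere. The only genuinely delicate point — the ``main obstacle'' — is confirming well-definedness and the homomorphism property are not in tension: one must check that the distributivity computation above does not secretly depend on multiplicities in the indexing, which is exactly where idempotency of $+$ is essential, together with the fact that $\overline{h}$ maps onto a set whose size may differ from $|B_i|$. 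This is routine but should be stated carefully.
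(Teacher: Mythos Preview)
Your proof is correct and follows precisely the approach the paper relies on: although Theorem~\ref{thm:up1} is stated here with a citation to \cite{PZ12a} rather than reproved, the surrounding proofs of Corollaries~\ref{cor:up2} and~\ref{cor:up3} use exactly your formula $\overline{\overline{h}}(T)=\sum_{t\in T}\overline{h}(t)$ and invoke Theorem~\ref{thm:up1} for the verification that this is the unique $\{\Om,\cup\}$-homomorphism extending $h$. Your explicit check of the homomorphism property via distributivity and of uniqueness via the Disjunctive Form Lemma fills in exactly what the cited result asserts.
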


\begin{cor}[Universality Property for $0$-Semilattice Ordered Algebras] \label{cor:up2} 
Let $X$ be an arbitrary set and $(A,\Om,+,0)\in \mm_{\vv}^{0}$. Each mapping $h\colon X\to A$ can
be extended to a unique homomorphism
$\overline{\overline{h}}\colon(\mathcal{P}^{<\om}F_{\vv}(X),\Om,\cup,\emptyset)\to (A,\Om,+,0)$, such that
$\overline{\overline{h}}|_{X}=h$.
\end{cor}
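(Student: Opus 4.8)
The plan is to bootstrap from Theorem~\ref{thm:up1} and then account for the single extra element, the empty set $\emptyset$, which the signature forces to be sent to the constant $0$.

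For existence, I would first apply Theorem~\ref{thm:up1} to obtain the unique homomorphism $g\colon(\wpf F_{\vv}(X),\Om,\cup)\to(A,\Om,+)$ of semilattice ordered algebras with $g|_X=h$, and then define $\overline{\overline{h}}\colon\mathcal{P}^{<\om}F_{\vv}(X)\to A$ by $\overline{\overline{h}}(S):=g(S)$ for $S\neq\emptyset$ and $\overline{\overline{h}}(\emptyset):=0$; this is well defined because $\mathcal{P}^{<\om}F_{\vv}(X)$ is the disjoint union of $\wpf F_{\vv}(X)$ and $\{\emptyset\}$. It then remains to check that $\overline{\overline{h}}$ is a homomorphism of $0$-semilattice ordered algebras. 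Preservation of the constant holds by definition. For $\cup$: when both arguments are non-empty this is exactly that $g$ respects $\cup$; when one argument is empty, say $\emptyset\cup S=S$ with $S\neq\emptyset$, one has $\overline{\overline{h}}(S)=0+\overline{\overline{h}}(S)$ since $0$ is the least element of $(A,+)$, and $\emptyset\cup\emptyset=\emptyset$ gives $0=0+0$. For an $n$-ary $\om\in\Om$ applied to non-empty $A_1,\dots,A_n$ it is again just that $g$ respects $\om$; if some $A_i=\emptyset$, then $\om(A_1,\dots,A_n)=\emptyset$ by the definition of the complex operation, so the left side is $0$, while $\overline{\overline{h}}(A_i)=0$ forces $\om(\overline{\overline{h}}(A_1),\dots,\overline{\overline{h}}(A_n))=0$ because $(A,\Om,+,0)$ is a $0$-semilattice ordered algebra. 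Finally $\overline{\overline{h}}|_X=g|_X=h$.

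For uniqueness, I would take an arbitrary homomorphism $f\colon(\mathcal{P}^{<\om}F_{\vv}(X),\Om,\cup,\emptyset)\to(A,\Om,+,0)$ with $f|_X=h$. Since $f$ preserves constants, $f(\emptyset)=0$. The subset $\wpf F_{\vv}(X)$ is closed in $\mathcal{P}^{<\om}F_{\vv}(X)$ under $\cup$ and under all complex operations from $\Om$ (a union of two non-empty finite sets is non-empty and finite, and likewise $\om$ applied to non-empty finite sets), so $f$ restricts to a homomorphism $(\wpf F_{\vv}(X),\Om,\cup)\to(A,\Om,+)$ of semilattice ordered algebras extending $h$; by the uniqueness clause of Theorem~\ref{thm:up1} this restriction equals $g$. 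Hence $f=\overline{\overline{h}}$.

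The only step with any real content—and thus the expected obstacle, such as it is—is the verification of the homomorphism property for the operations of $\Om$ on tuples having an empty coordinate: this is precisely the place where the defining axiom of a $0$-semilattice ordered algebra (an $\Om$-operation returns $0$ whenever one of its inputs is $0$) is used, matching the convention that a complex operation returns $\emptyset$ whenever one of its inputs is $\emptyset$. The remaining verifications are routine bookkeeping once Theorem~\ref{thm:up1} is in hand.
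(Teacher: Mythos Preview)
Your proof is correct and follows essentially the same approach as the paper: invoke Theorem~\ref{thm:up1} to handle non-empty finite subsets, extend by sending $\emptyset$ to $0$, and verify the homomorphism conditions for $\cup$ and for each $\om\in\Om$ on tuples containing $\emptyset$, using exactly the defining axiom of a $0$-semilattice ordered algebra. Your explicit uniqueness argument is a slight improvement over the paper, which leaves that part implicit in the appeal to Theorem~\ref{thm:up1}.
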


\begin{proof}
Let $(A,\Om,+,0)\in\mm_{\vv}^{0}$. Since
$(A,\Om)\in \vv$ then any mapping $h\colon X\to A$
may be uniquely extended to an $\Om$-homomorphism
$\overline{h}\colon(F_{\vv}(X),\Om)\to (A,\Om)$.

Let us define the mapping $\overline{\overline{h}}\colon(\mathcal{P}^{<\om} F_{\vv}(X),\Om,\cup,\emptyset)\to (A,\Om,+,0)$ by
\[
\overline{\overline{h}}(T)=\sum\limits_{t\in
T}\overline{h}(t),
\]
if $T$ is a non-empty finite subset of $F_{\vv}(X)$, and
\[
\overline{\overline{h}}(\emptyset)=0.
\]

By Theorem \ref{thm:up1} the mapping $\overline{\overline{h}}|_{\mathcal{P}^{<\om}_{>0} F_{\vv}(X)}$ is the unique $\{\Om,\cup\}$-homomorphism  such that
$\overline{\overline{h}}|_{X}=h$. But obviously if for some $T_i=\emptyset$, we also have:
\[
\overline{\overline{h}}(\om(T_1,\ldots,\emptyset,\ldots,T_n))=\overline{\overline{h}}(\emptyset)=0
=\om(\overline{\overline{h}}(T_1),\ldots,0,\ldots,\overline{\overline{h}}(T_n))=
\om(\overline{\overline{h}}(T_1),\ldots,\overline{\overline{h}}(\emptyset),\ldots,\overline{\overline{h}}(T_n)).
\]
Moreover, for $T_1=\emptyset$
\[
\overline{\overline{h}}(\emptyset\cup T_2)=\overline{\overline{h}}(T_2)=0+\overline{\overline{h}}(T_2)=\overline{\overline{h}}(\emptyset)+ \overline{\overline{h}}(T_2),
\]
which shows that the mapping $\overline{\overline{h}}|_{\mathcal{P}^{<\om} F_{\vv}(X)}$ is an $\{\Om,\cup,\emptyset\}$-homomorphism, and finishes the proof.
\end{proof}

\begin{cor} [Universality Property for Semilattice Ordered Algebras with a unit] \label{cor:up3}
Let $X$ be an arbitrary set and $(A,\Om,+,1)\in \mm_{\vv_1}$. Each mapping $h\colon X\to A$ can
be extended to a unique homomorphism
$\overline{\overline{h}}\colon(\wpf F_{\vv_1}(X),\Om,\cup,\{1\})\to (A,\Om,+,1)$, such that
$\overline{\overline{h}}|_{X}=h$.
\end{cor}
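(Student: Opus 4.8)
The plan is to follow the template of the proofs of Theorem \ref{thm:up1} and Corollary \ref{cor:up2}, the only genuinely new feature being the bookkeeping of the unit.

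First, since $(A,\Om,+,1)\in\mm_{\vv_1}$, the algebra $(A,\Om,1)$ lies in $\vv_1$, so by freeness of $(F_{\vv_1}(X),\Om,1)$ the map $h\colon X\to A$ extends uniquely to an $\{\Om,1\}$-homomorphism $\overline h\colon(F_{\vv_1}(X),\Om,1)\to(A,\Om,1)$; in particular $\overline h(1)=1$. Then I would set
\[
\overline{\overline h}(T):=\sum_{t\in T}\overline h(t)
\]
for every finite non-empty $T\subseteq F_{\vv_1}(X)$, and verify that $\overline{\overline h}$ is a homomorphism $(\wpf F_{\vv_1}(X),\Om,\cup,\{1\})\to(A,\Om,+,1)$. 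That $\overline{\overline h}$ carries $\cup$ to $+$ is immediate from idempotency, commutativity and associativity of $+$; that it commutes with each $n$-ary $\om\in\Om$ is the same computation as in the proof of Theorem \ref{thm:up1}: expand $\om(T_1,\dots,T_n)=\{\om(t_1,\dots,t_n)\mid t_i\in T_i\}$, use that $\overline h$ is an $\Om$-homomorphism, and then use distributivity of $\om$ over $+$ in $A$ to pull the sums outside. The new point is the constant: $\overline{\overline h}(\{1\})=\overline h(1)=1$, so the designated unit is preserved as well; and $\overline{\overline h}(\{x\})=\overline h(x)=h(x)$, so $\overline{\overline h}$ extends $h$ (identifying $x$ with $\{x\}$).

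For uniqueness, suppose $g$ is another $\{\Om,\cup,\{1\}\}$-homomorphism with $g|_X=h$. The singletons $\{x\}$, $x\in X$, together with the constant $\{1\}$ generate $(\wpf F_{\vv_1}(X),\Om,\cup,\{1\})$: every $T\in\wpf F_{\vv_1}(X)$ is a finite union $\{t_1\}\cup\dots\cup\{t_p\}$, each $t_j$ is the value of some $\{\Om,1\}$-term over $X$, and the complex operations send singletons to singletons, so each $\{t_j\}$ lies in the $\{\Om,1\}$-subreduct generated by $X$ (this is the content of the Disjunctive Form Lemma \ref{slowa} with $\{1\}$ adjoined to the operations, and in any case it is elementary here). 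Since $g$ and $\overline{\overline h}$ agree on these generators and on $\{1\}$, an induction on term structure gives $g(\{t\})=\overline h(t)=\overline{\overline h}(\{t\})$ for all $t\in F_{\vv_1}(X)$, whence $g(T)=\sum_{t\in T}g(\{t\})=\overline{\overline h}(T)$ for all $T$; thus $g=\overline{\overline h}$.

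I expect the only real subtlety, and hence the main obstacle, to be precisely the interaction of the unit with the $\Om$-operations. Unlike Corollary \ref{cor:up2}, where the extra constant $0$ lives entirely on the semilattice side and can be grafted on after quoting Theorem \ref{thm:up1} for the non-empty part, here $1$ is woven into the $\Om$-operations as their unit, so Theorem \ref{thm:up1} cannot be applied verbatim: one must re-run its computation while simultaneously tracking that $\overline h$, and hence $\overline{\overline h}$, respects the nullary symbol. A minor but important point to state carefully is that $F_{\vv_1}(X)$ denotes the free algebra in $\vv_1$ over $X$, with $1$ interpreted as the designated unit element rather than a free generator, so that $X$ alone — not $X\cup\{1\}$ — is the generating set of the power algebra as a semilattice ordered algebra with a unit; this is what makes the universal property hold relative to maps out of $X$.
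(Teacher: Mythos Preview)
Your proof is correct and follows essentially the same route as the paper's: extend $h$ to the $\{\Om,1\}$-homomorphism $\overline h$ on $F_{\vv_1}(X)$, define $\overline{\overline h}(T)=\sum_{t\in T}\overline h(t)$, and then check that the unit is preserved via $\overline{\overline h}(\{1\})=\overline h(1)=1$. The one point worth noting is that your anticipated obstacle is overstated: the paper simply invokes Theorem~\ref{thm:up1} directly for the $\{\Om,\cup\}$-homomorphism part (the construction there depends only on $\overline h$ being an $\Om$-homomorphism, not on which free algebra it is defined on), and then appends the single-line check for the constant $\{1\}$---there is no need to ``re-run the computation while simultaneously tracking'' the nullary symbol, since the unit plays no role in verifying compatibility with $\Om$ and $\cup$.
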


\begin{proof}
Let $(A,\Om,1)$ be a $\vv_1$-algebra with the unit $1\in A$.
Then for the $\{\Om,1\}$-homomorphism
$\overline{h}\colon(F_{\vv_1}(X),\Om,1)\to (A,\Om,1)$, which extends a mapping $h\colon X\to A$ one has that $\overline{h}(1)=1$.
Further, by Theorem \ref{thm:up1}, the mapping
$\overline{\overline{h}}\colon(\wpf F_{\vv_1}(X),\Om,\cup)\to (A,\Om,+)$,
\[
\overline{\overline{h}}(T)=\sum\limits_{t\in
T}\overline{h}(t)
\] for a non-empty finite subset $T$  of $F_{\vv_1}(X)$, is a homomorphism such that
$\overline{\overline{h}}|_{X}=h$. In particular, for $T=\{1\}$
\[
\overline{\overline{h}}(\{1\})=\overline{h}(1)=1.
\]
\end{proof}

As a direct corollary from Corollaries \ref{cor:up2}-\ref{cor:up3} we obtain
\begin{cor} [Universality Property for $0$-Semilattice Ordered Algebras with a unit]\label{cor:up4}
Let $X$ be an arbitrary set and $(A,\Om,+,0,1)\in \mm_{\vv_1}^{0}$. Each mapping $h\colon X\to A$ can
be extended to a unique homomorphism
$\overline{\overline{h}}\colon(\mathcal{P}^{<\om} F_{\vv_1}(X),\Om,\cup,\emptyset,\{1\})\to (A,\Om,+,0,1)$, such that
$\overline{\overline{h}}|_{X}=h$.
\end{cor}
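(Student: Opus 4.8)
The plan is to obtain this as the promised direct corollary of Corollaries \ref{cor:up2} and \ref{cor:up3}, by handling the two kinds of constants separately. Fix $(A,\Om,+,0,1)\in\mm_{\vv_1}^{0}$ and a map $h\colon X\to A$. Since $(A,\Om,1)\in\vv_1$, the map $h$ extends uniquely to an $\{\Om,1\}$-homomorphism $\overline{h}\colon(F_{\vv_1}(X),\Om,1)\to(A,\Om,1)$, so in particular $\overline{h}(1)=1$. Define $\overline{\overline{h}}\colon(\mathcal{P}^{<\om}F_{\vv_1}(X),\Om,\cup,\emptyset,\{1\})\to(A,\Om,+,0,1)$ by $\overline{\overline{h}}(\emptyset)=0$ and $\overline{\overline{h}}(T)=\sum_{t\in T}\overline{h}(t)$ for every non-empty finite $T\subseteq F_{\vv_1}(X)$.

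First I would apply Corollary \ref{cor:up3} to the semilattice ordered algebra with unit $(A,\Om,+,1)$: it gives that the restriction $\overline{\overline{h}}|_{\wpf F_{\vv_1}(X)}$ is the unique $\{\Om,\cup,\{1\}\}$-homomorphism $(\wpf F_{\vv_1}(X),\Om,\cup,\{1\})\to(A,\Om,+,1)$ that agrees with $h$ on $X$; in particular $\overline{\overline{h}}(\{1\})=\overline{h}(1)=1$. It then remains only to check that extending this map by $\emptyset\mapsto 0$ is compatible with the operations that involve $\emptyset$, and this is exactly the verification carried out in the proof of Corollary \ref{cor:up2}: if some argument of an $\om\in\Om$ is $\emptyset$, then $\om(T_1,\dots,\emptyset,\dots,T_n)=\emptyset$ is sent to $0=\om(\overline{\overline{h}}(T_1),\dots,0,\dots,\overline{\overline{h}}(T_n))$ because $(A,\Om,+,0)$ is a $0$-semilattice ordered algebra, and $\emptyset\cup T=T$ is sent to $\overline{\overline{h}}(T)=0+\overline{\overline{h}}(T)=\overline{\overline{h}}(\emptyset)+\overline{\overline{h}}(T)$ since $0$ is the least element of $(A,+)$. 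Together with $\overline{\overline{h}}(\emptyset)=0$ this shows that $\overline{\overline{h}}$ is a homomorphism of type $\{\Om,\cup,\emptyset,\{1\}\}$ with $\overline{\overline{h}}|_X=h$.

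For uniqueness, if $g$ is any homomorphism $(\mathcal{P}^{<\om}F_{\vv_1}(X),\Om,\cup,\emptyset,\{1\})\to(A,\Om,+,0,1)$ with $g|_X=h$, then $g$ preserves the constant $\emptyset$, so $g(\emptyset)=0$, while $g|_{\wpf F_{\vv_1}(X)}$ is a $\{\Om,\cup,\{1\}\}$-homomorphism extending $h$ and therefore coincides with $\overline{\overline{h}}|_{\wpf F_{\vv_1}(X)}$ by Corollary \ref{cor:up3}; hence $g=\overline{\overline{h}}$.

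I do not expect any real obstacle: the only genuinely new content is that the two types of constants do not interfere, and this is automatic because the unit $\{1\}$ already sits among the non-empty finite subsets (where Corollary \ref{cor:up3} governs the situation), while the only new operational relations concern $\emptyset$ (and were already settled in Corollary \ref{cor:up2}). The single point to keep straight is that $(A,\Om,+,0)$, with $1$ forgotten, really is a $0$-semilattice ordered algebra, so that the implication ``$x_i=0\Rightarrow\om(\dots,x_i,\dots)=0$'' is at our disposal.
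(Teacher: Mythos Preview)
Your proposal is correct and follows exactly the approach the paper indicates: the paper gives no separate proof of this corollary, stating only that it is a direct consequence of Corollaries \ref{cor:up2} and \ref{cor:up3}, and your argument is precisely the natural way to combine those two results. The observation that the unit $\{1\}$ lives among the non-empty finite subsets while $\emptyset$ is handled by the $0$-structure is exactly the point that makes the combination immediate.
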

By Theorem \ref{thm:up1} and Corollaries \ref{cor:up2}-\ref{cor:up4}, for an arbitrary variety
$\vv\subseteq\typ$ or $\vv_1\subseteq\typ_1$, algebras $(\wpf F_{\vv}(X),\Om,\cup)$, $(\mathcal{P}^{<\om} F_{\vv}(X),\Om,\cup,\emptyset)$ or $(\wpf F_{\vv_1}(X),\Om,\cup,\{1\})$ have the
universality property for semilattice ordered algebras in
$\mm_{\vv}$, $\mm_{\vv}^0$ or $\mm_{\vv_1}$, respectively, but in general, the algebras themself doesn't have to
belong to these varieties. 

\begin{exm}\label{sec4:exm1} 
Let $\vv$ be a variety of semilattices $(A,\cdot)$ and $\vv_1$ be a variety of semilattices $(A,\cdot,1)$ with the greatest element $1$.

Consider the free semilattice $(F_{\vv}(X),\cdot)$ over a set $X$
in the variety $\vv$,  the free algebra $(F_{\vv_1}(X),\cdot,1)$ over a set $X$
in the variety $\vv_1$ and their two generators $x,y\in X$. One can
easily see that
\begin{equation*}
\{x,y\}\cdot\{x,y\}=\{x,x\cdot y,y\}\neq\{x,y\}.
\end{equation*}
This shows that the algebra $(\wpf F_{\vv}(X),\cdot,\cup)$ does not
belong to the variety $\mm_{\vv}$. This also immediately implies that algebras $(\mathcal{P}^{<\om} F_{\vv}(X),\cdot,\cup,\emptyset)$
and $(\wpf F_{\vv}(X) F_{\vv}(X),\cdot,\cup,\{1\})$ do not belong to varieties $\mm_{\vv}^0$ and $\mm_{\vv_1}$, respectively.
\end{exm}

\begin{cor}\cite{PZ12a}\label{sec4:cor1}
The semilattice ordered algebra
$(\wpf F_{\vv}(X),\Om,\cup)$ is free over a set $X$ in the variety
$\mm_{\vv}$ if and only if $(\wpf F_{\vv}(X),\Om,\cup)\in\mm_{\vv}$.
\end{cor}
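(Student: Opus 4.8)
The plan is to prove both implications, with the forward direction being essentially immediate and the reverse direction requiring us to combine the universality property with the Disjunctive Form Lemma. For the ``only if'' direction: if $(\wpf F_{\vv}(X),\Om,\cup)$ is free over $X$ in $\mm_{\vv}$, then by definition the free object lives in the variety it is free in, so $(\wpf F_{\vv}(X),\Om,\cup)\in\mm_{\vv}$. Nothing more is needed here.

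For the ``if'' direction, assume $(\wpf F_{\vv}(X),\Om,\cup)\in\mm_{\vv}$. We must verify the universal mapping property that characterizes the free algebra over $X$ in $\mm_{\vv}$: namely, (i) $X$ generates $(\wpf F_{\vv}(X),\Om,\cup)$ as a semilattice ordered algebra, and (ii) every map $h\colon X\to A$ with $(A,\Om,+)\in\mm_{\vv}$ extends uniquely to a homomorphism. Claim (ii) is exactly Theorem \ref{thm:up1} (the Universality Property), which holds regardless of whether the power algebra lies in $\mm_{\vv}$ — so this is handed to us. Claim (i) is the point where the hypothesis $(\wpf F_{\vv}(X),\Om,\cup)\in\mm_{\vv}$ gets used: we first observe that the singletons $\{x\}$ for $x\in X$ generate $F_{\vv}(X)$ under the $\Om$-operations (complex operations on singletons are singletons, and this mirrors the fact that $X$ generates $F_{\vv}(X)$), hence $\{\,\{t\}\mid t\in F_{\vv}(X)\,\}\subseteq\langle X\rangle_{\Om}$ inside the power algebra; then every finite nonempty $T=\{t_1,\dots,t_p\}$ equals $\{t_1\}\cup\dots\cup\{t_p\}$, so $X$ generates the whole semilattice ordered algebra. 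With (i) and (ii) in hand, $(\wpf F_{\vv}(X),\Om,\cup)$ satisfies the defining universal property of $F_{\mm_{\vv}}(X)$ and therefore is the free algebra.

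The main subtlety — though it is more a matter of careful bookkeeping than a genuine obstacle — is ensuring that the extension $\overline{\overline{h}}$ from Theorem \ref{thm:up1} is genuinely a homomorphism of semilattice ordered $\vv$-algebras into $(A,\Om,+)$ and that uniqueness holds; but since we are assuming the source algebra is in $\mm_{\vv}$, the operations behave as expected and Theorem \ref{thm:up1} applies verbatim. One should also note that without the hypothesis $(\wpf F_{\vv}(X),\Om,\cup)\in\mm_{\vv}$, the algebra cannot be the free object in $\mm_{\vv}$ for the trivial reason that it is not even a member of that variety, as Example \ref{sec4:exm1} illustrates; this is what makes the equivalence non-vacuous. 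Since this corollary is attributed to \cite{PZ12a}, the write-up can be brief, essentially just assembling Theorem \ref{thm:up1} with the generation observation.
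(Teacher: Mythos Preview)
Your proposal is correct and follows the paper's (implicit) approach: the corollary is stated without proof in the paper and simply cited from \cite{PZ12a}, being an immediate consequence of Theorem~\ref{thm:up1}. Your added verification that $X$ generates $(\wpf F_{\vv}(X),\Om,\cup)$ via singletons and finite unions is a welcome detail the paper leaves implicit; note, incidentally, that you announce the Disjunctive Form Lemma in your plan but your actual argument---correctly---does not need it.
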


\begin{cor}
The semilattice ordered algebra
$(\mathcal{P}^{<\om} F_{\vv}(X),\Om,\cup,\emptyset)$ is free over a set $X$ in the variety
$\mm_{\vv}^0$ if and only if $(\mathcal{P}^{<\om} F_{\vv}(X),\Om,\cup,\emptyset)\in\mm_{\vv}^0$.
\end{cor}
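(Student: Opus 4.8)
The plan is to mirror the proof of Corollary~\ref{sec4:cor1}, replacing the Universality Property for Semilattice Ordered Algebras (Theorem~\ref{thm:up1}) by its $0$-analogue, Corollary~\ref{cor:up2}. The left-to-right implication is immediate: a free algebra over $X$ in the variety $\mm_{\vv}^0$ is, by definition, an object of $\mm_{\vv}^0$, so if $(\mathcal{P}^{<\om} F_{\vv}(X),\Om,\cup,\emptyset)$ is free over $X$ in $\mm_{\vv}^0$, then it belongs to $\mm_{\vv}^0$.

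For the converse, I would assume $(\mathcal{P}^{<\om} F_{\vv}(X),\Om,\cup,\emptyset)\in\mm_{\vv}^0$ and check the three conditions that characterise freeness over $X$ in $\mm_{\vv}^0$. Membership in the variety is exactly the hypothesis. For generation, identify $X$ with the set of singletons $\{\{x\}\mid x\in X\}$ (an injective assignment), and note that $\emptyset$ lies in every subalgebra, that each $t\in F_{\vv}(X)$ is the value of an $\Om$-term on $X$ so that $\{t\}$ arises from the singletons by complex $\Om$-operations, and that an arbitrary non-empty finite $T\subseteq F_{\vv}(X)$ equals $\bigcup_{t\in T}\{t\}$; hence $\{\{x\}\mid x\in X\}$ generates $(\mathcal{P}^{<\om} F_{\vv}(X),\Om,\cup,\emptyset)$. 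Finally, the universal mapping property, together with the uniqueness of the extending homomorphism, is precisely Corollary~\ref{cor:up2}. These three facts together say that $(\mathcal{P}^{<\om} F_{\vv}(X),\Om,\cup,\emptyset)$ is free over $X$ in $\mm_{\vv}^0$.

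No real obstacle is expected here, since the substantive content has already been established in Corollary~\ref{cor:up2}. The one point worth stressing is why the statement must be conditional: Corollary~\ref{cor:up2} yields the universal mapping property for $(\mathcal{P}^{<\om} F_{\vv}(X),\Om,\cup,\emptyset)$ unconditionally, but freeness additionally requires the algebra to be an object of the target variety, which (as Example~\ref{sec4:exm1} shows) can fail. Thus the hypothesis ``$\in\mm_{\vv}^0$'' is exactly what closes the argument.
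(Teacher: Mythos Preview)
Your proposal is correct and follows exactly the approach the paper intends: the paper states this corollary without proof, treating it as an immediate consequence of Corollary~\ref{cor:up2} in the same way that Corollary~\ref{sec4:cor1} follows from Theorem~\ref{thm:up1}. Your explicit verification of generation by singletons and your remark on why the membership hypothesis is needed (via Example~\ref{sec4:exm1}) simply spell out what the paper leaves implicit.
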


\begin{cor}
The semilattice ordered algebra
$(\wpf F_{\vv_1}(X),\Om,\cup,\{1\})$ is free over a set $X$ in the variety
$\mm_{\vv_1}$ if and only if $(\wpf F_{\vv_1}(X),\Om,\cup,\{1\})\in\mm_{\vv_1}$.
\end{cor}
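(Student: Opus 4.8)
The plan is to mirror the argument behind Corollary \ref{sec4:cor1}, using Corollary \ref{cor:up3} (the Universality Property for semilattice ordered algebras with a unit) in place of Theorem \ref{thm:up1}. The ``only if'' direction is immediate: if $(\wpf F_{\vv_1}(X),\Om,\cup,\{1\})$ is free over $X$ in $\mm_{\vv_1}$, then by the very definition of a free algebra it is a member of $\mm_{\vv_1}$.

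For the ``if'' direction, assume $(\wpf F_{\vv_1}(X),\Om,\cup,\{1\})\in\mm_{\vv_1}$. First I would check that $X$ (identified with the set of singletons $\{\{x\}\mid x\in X\}$) generates this algebra in the signature $(\Om,\cup,\{1\})$: every element of $F_{\vv_1}(X)$ is a $\{\Om,1\}$-term in the variables from $X$, so every singleton $\{t\}$ is obtained from the singletons $\{x\}$ ($x\in X$) and the constant $\{1\}$ by applying complex $\Om$-operations, and, by the Disjunctive Form Lemma \ref{slowa} (equivalently, directly), every non-empty finite subset $T=\{t_1,\dots,t_k\}\subseteq F_{\vv_1}(X)$ equals $\{t_1\}\cup\dots\cup\{t_k\}$; hence $X$ generates the algebra. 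Then, given any $(A,\Om,+,1)\in\mm_{\vv_1}$ and any map $h\colon X\to A$, Corollary \ref{cor:up3} supplies a homomorphism $\overline{\overline{h}}\colon(\wpf F_{\vv_1}(X),\Om,\cup,\{1\})\to(A,\Om,+,1)$ extending $h$, and this homomorphism is necessarily unique since $X$ generates the domain. Together with the standing hypothesis that $(\wpf F_{\vv_1}(X),\Om,\cup,\{1\})$ lies in $\mm_{\vv_1}$, this is precisely the universal mapping property, so the algebra is free over $X$ in $\mm_{\vv_1}$.

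I do not expect a genuine obstacle here: the statement is a routine consequence of the universality property once membership in $\mm_{\vv_1}$ is assumed. The only point requiring a little care is the bookkeeping verification that the constant $\{1\}$ together with the singletons $\{x\}$ generate $\wpf F_{\vv_1}(X)$ under the operations $\Om$ and $\cup$ — and this is handled by combining the fact that $F_{\vv_1}(X)$ is generated by $X$ as a $\{\Om,1\}$-algebra with the description of elements of $\wpf F_{\vv_1}(X)$ as finite unions of singletons.
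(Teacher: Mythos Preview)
Your proposal is correct and follows exactly the approach implicit in the paper: the corollary is stated there without a separate proof, as an immediate consequence of the Universality Property (Corollary~\ref{cor:up3}) together with the observation that the only thing possibly missing for freeness is membership in $\mm_{\vv_1}$. Your bookkeeping verification that $X$ generates $\wpf F_{\vv_1}(X)$ (via singletons, the constant $\{1\}$, and finite unions) is the routine detail the paper leaves tacit.
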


\begin{cor}
The semilattice ordered algebra
$(\mathcal{P}^{<\om} F_{\vv_1}(X),\Om,\cup,\emptyset,\{1\})$ is free over a set $X$ in the variety
$\mm_{\vv_1}^0$ if and only if $(\mathcal{P}^{<\om} F_{\vv_1}(X),\Om,\cup,\emptyset,\{1\})\in\mm_{\vv_1}^0$.
\end{cor}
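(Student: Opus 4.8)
The plan is to run the same argument used for Corollary~\ref{sec4:cor1} and its two successors, now carrying both constants $\emptyset$ and $\{1\}$ along. The key input is the Universality Property for $0$-semilattice ordered algebras with a unit, Corollary~\ref{cor:up4}, which already tells us that $(\mathcal{P}^{<\om} F_{\vv_1}(X),\Om,\cup,\emptyset,\{1\})$ has the universal mapping property for the variety $\mm_{\vv_1}^0$: every map $h\colon X\to A$ with $(A,\Om,+,0,1)\in\mm_{\vv_1}^0$ extends to a unique $\{\Om,\cup,\emptyset,\{1\}\}$-homomorphism restricting to $h$ on $X$. So the content of the corollary is just the passage from this universal mapping property to genuine freeness.

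For the ``if'' direction I would first record the (routine) fact that $X$, viewed as $\{\{x\}\mid x\in X\}$, generates $(\mathcal{P}^{<\om} F_{\vv_1}(X),\Om,\cup,\emptyset,\{1\})$ as a $0$-semilattice ordered algebra with a unit: since $F_{\vv_1}(X)$ is generated by $X$ together with the unit under the $\Om$-operations, and since $\om(\{t_1\},\dots,\{t_n\})=\{\om(t_1,\dots,t_n)\}$ while $\{1\}$ is a unit, each singleton $\{t\}$ with $t\in F_{\vv_1}(X)$ lies in the subalgebra generated by $X$; every nonempty finite subset is then a finite $\cup$-join of such singletons (an instance of the Disjunctive Form Lemma~\ref{slowa}), and $\emptyset$ is a nullary basic operation. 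Assuming now that $(\mathcal{P}^{<\om} F_{\vv_1}(X),\Om,\cup,\emptyset,\{1\})\in\mm_{\vv_1}^0$, this algebra lies in $\mm_{\vv_1}^0$, is generated by $X$, and by Corollary~\ref{cor:up4} admits the required unique extensions; hence it is free over $X$ in $\mm_{\vv_1}^0$.

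The ``only if'' direction is immediate, since a free algebra over $X$ in a variety is by definition a member of that variety. I do not expect a genuine obstacle here: the only subtle point is that in general the algebra need not belong to $\mm_{\vv_1}^0$ (cf.\ Example~\ref{sec4:exm1}), which is precisely why the biconditional is phrased as it is, and the existence half of the universal property in that situation is supplied once and for all by Corollary~\ref{cor:up4}. The proof is therefore a verbatim transcription of the earlier three corollaries with the two constants appended to the signature.
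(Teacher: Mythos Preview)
Your proposal is correct and follows exactly the approach the paper intends: the paper states this corollary without proof, as an immediate consequence of the Universality Property (Corollary~\ref{cor:up4}) in direct analogy with Corollary~\ref{sec4:cor1}. Your added verification that $X$ generates $(\mathcal{P}^{<\om} F_{\vv_1}(X),\Om,\cup,\emptyset,\{1\})$ is a welcome detail the paper leaves implicit, but the underlying argument is the same.
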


\begin{cor}\label{sec4:thm2}\cite{PZ12a}
Let $(F_{\typ}(X),\Om)$ be the free algebra over a set $X$ in the variety $\typ$.
The extended power algebra $(\wpf F_{\typ}(X),\Om,\cup)$ is free
over $X$ in the variety $\mm_{\typ}$ of all semilattice
ordered $\typ$-algebras.
\end{cor}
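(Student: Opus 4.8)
The plan is to reduce the statement to Corollary~\ref{sec4:cor1}. That corollary, applied with $\vv=\typ$, says that $(\wpf F_{\typ}(X),\Om,\cup)$ is free over $X$ in $\mm_{\typ}$ \emph{if and only if} $(\wpf F_{\typ}(X),\Om,\cup)\in\mm_{\typ}$. So the entire task is to verify this membership; once that is done, there is nothing left to prove.

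To check membership, first recall from the Extended power algebras example that for \emph{any} algebra $(B,\Om)$ the extended power algebra $(\wpf B,\Om,\cup)$ is a semilattice ordered $\typ$-algebra: $(\wpf B,\cup)$ is a join semilattice, and by the J\'onsson--Tarski result the complex operations distribute over $\cup$. Thus $(\wpf F_{\typ}(X),\Om,\cup)$ satisfies all the requirements of Definition~\ref{sec2:def1} except possibly that its $\Om$-reduct lie in $\typ$. But here that is automatic: $\typ$ is, by definition, the variety of \emph{all} algebras of the fixed finitary type $\tau\colon\Om\to\mathbb{N}^+$, so $(\wpf F_{\typ}(X),\Om)\in\typ$ trivially. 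Hence $(\wpf F_{\typ}(X),\Om,\cup)\in\mm_{\typ}$, and Corollary~\ref{sec4:cor1} gives the claim. (Equivalently, one may bypass Corollary~\ref{sec4:cor1} and argue directly: $(\wpf F_{\typ}(X),\Om,\cup)$ is a member of $\mm_{\typ}$, it is generated by $\{\{x\}\mid x\in X\}$ since $F_{\typ}(X)$ is $\Om$-generated by $X$ and every finite nonempty subset is a union of singletons, and by Theorem~\ref{thm:up1} it has the required universal mapping property for $\mm_{\typ}$; so it is the free object.)

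There is essentially no obstacle to overcome, and it is worth pointing out \emph{why}. The subtlety in the general case --- illustrated by Example~\ref{sec4:exm1}, where $(\wpf F_{\vv}(X),\cdot,\cup)$ fails to be a semilattice ordered $\vv$-algebra --- stems entirely from non-linear identities holding in $\vv$ (a power operation need not distribute over $\cup$). For $\typ$ no such identity is present: $\typ$ is defined by the empty, hence vacuously linear, set of identities, so $\typ=\typ\Sigma^{<\om}_{>0}$ by Corollary~\ref{gl3}, which re-confirms that $(\wpf F_{\typ}(X),\Om)\in\typ$. Thus the only thing the proof really uses is this triviality, together with the already-established universality property and Corollary~\ref{sec4:cor1}.
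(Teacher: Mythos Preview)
Your proof is correct and follows exactly the approach implicit in the paper: the result is stated there as an immediate corollary of Corollary~\ref{sec4:cor1} (and of the general universality property), the only point being that membership of $(\wpf F_{\typ}(X),\Om,\cup)$ in $\mm_{\typ}$ is automatic since $\typ$ is the variety of \emph{all} $\Om$-algebras of the given type. Your additional remark linking this to Corollary~\ref{gl3} (the empty set of identities is vacuously linear) is also precisely what the paper notes in the sentence following the corollary.
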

Note that, by Corollary \ref{gl3}, the
same holds also for any variety defined by a set of linear identities.

\begin{thm}\cite{PZ12a}\label{sec4:thm3}
Let $\vv$ be a variety defined by a set of linear identities. The extended power algebra $(\wpf
F_{\vv}(X),\Om,\cup)$ is free over $X$ in the variety
$\mm_{\vv}$ of all semilattice ordered $\vv$-algebras.
\end{thm}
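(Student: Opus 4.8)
The plan is to deduce the theorem from Corollary \ref{sec4:cor1}, which already reduces freeness to a membership question: the semilattice ordered algebra $(\wpf F_{\vv}(X),\Om,\cup)$ is free over $X$ in $\mm_{\vv}$ if and only if it belongs to $\mm_{\vv}$. (The substantive direction there rests on the Universality Property, Theorem \ref{thm:up1}, which supplies, for every $(A,\Om,+)\in\mm_{\vv}$ and every $h\colon X\to A$, the unique extension of $h$ to a homomorphism out of $\wpf F_{\vv}(X)$.) So the whole task is to verify that $(\wpf F_{\vv}(X),\Om,\cup)$ is itself a semilattice ordered $\vv$-algebra.

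First I would dispose of the two routine conditions. The reduct $(\wpf F_{\vv}(X),\cup)$ is a join semilattice under set-theoretic union, and by the J\'{o}nsson--Tarski result recalled in Section \ref{sec:SLO} every complex operation from $\Om$ distributes over $\cup$. What remains is the only substantial point: that the $\Om$-reduct $(\wpf F_{\vv}(X),\Om)$ belongs to $\vv$, i.e.\ satisfies every identity defining $\vv$.

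For this I would invoke the power-algebra identity machinery. Since $F_{\vv}(X)\in\vv$, the algebra $(\wpf F_{\vv}(X),\Om)$ is, by the definition of $\vv\Sigma^{<\om}_{>0}$, one of its generating algebras, hence lies in $\vv\Sigma^{<\om}_{>0}$. On the other hand $\vv$ is, by hypothesis, defined by a set of linear identities, so Corollary \ref{gl3} gives $\vv=\vv\Sigma^{<\om}_{>0}$. Combining the two, $(\wpf F_{\vv}(X),\Om)\in\vv$, which finishes the verification that $(\wpf F_{\vv}(X),\Om,\cup)\in\mm_{\vv}$; Corollary \ref{sec4:cor1} then delivers freeness. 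If one prefers to see this last step without appealing to $\vv\Sigma^{<\om}_{>0}$, the same conclusion comes out directly: for any linear identity $t\approx u$ holding in $\vv$, the complex operations $t$ and $u$ on $\wpf F_{\vv}(X)$ are given elementwise by \eqref{sec1:eq1}, and since $t$ and $u$ agree as term functions on $F_{\vv}(X)\in\vv$, these two complex operations coincide.

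The main obstacle is conceptual rather than computational: recognizing that being ``defined by linear identities'' is precisely the hypothesis that keeps the finite-subset power construction inside $\vv$. Once that is granted --- and it is exactly the content of Corollary \ref{gl3} --- the argument runs just as in the case $\vv=\typ$ already recorded in Corollary \ref{sec4:thm2}.
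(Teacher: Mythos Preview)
Your proof is correct and follows exactly the route the paper indicates: the paper does not spell out a proof but simply remarks, just before the theorem, that ``by Corollary~\ref{gl3}, the same holds also for any variety defined by a set of linear identities,'' i.e.\ one reduces to membership via Corollary~\ref{sec4:cor1} and then uses $\vv=\vv\Sigma^{<\om}_{>0}$ to place the power reduct back in $\vv$. Your write-up is just a careful expansion of that one-line justification.
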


\begin{thm}\label{sec4:thm4}
Let $\vv$ be a variety defined by a set of linear regular identities. The $\emptyset$-extended power algebra $(\mathcal{P}^{<\om} F_{\vv}(X),\Om,\cup,\emptyset)$ is free over $X$ in the variety
$\mm_{\vv}^0$ of all $0$-semilattice ordered $\vv$-algebras.
\end{thm}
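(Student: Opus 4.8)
The plan is to combine the Universality Property for $0$-Semilattice Ordered Algebras (Corollary \ref{cor:up2}) with the structural observation from Corollary \ref{gl3} that a variety $\vv$ defined by linear regular identities satisfies $\vv=\vv\Sigma^{<\om}$, i.e.\ $\vv$ is closed under forming $\emptyset$-extended power algebras of finite subsets. First I would recall that Corollary \ref{cor:up2} already gives, for every $(A,\Om,+,0)\in\mm_\vv^0$ and every map $h\colon X\to A$, a unique homomorphism $\overline{\overline h}\colon(\mathcal{P}^{<\om}F_\vv(X),\Om,\cup,\emptyset)\to(A,\Om,+,0)$ extending $h$. So the only thing left to verify is that the structure $(\mathcal{P}^{<\om}F_\vv(X),\Om,\cup,\emptyset)$ is itself an object of the variety $\mm_\vv^0$; once that is known, the universality property upgrades to freeness in the usual way.

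The heart of the argument is therefore to check membership in $\mm_\vv^0$. This breaks into three parts. (i) The $\Om$-reduct $(\mathcal{P}^{<\om}F_\vv(X),\Om)$ lies in $\vv$: this is exactly the content of Corollary \ref{gl3}, since $\vv$ is defined by linear regular identities and $(\mathcal{P}^{<\om}F_\vv(X),\Om)$ is a finite-subset $\emptyset$-extended power algebra of the $\vv$-algebra $F_\vv(X)$, hence belongs to $\vv\Sigma^{<\om}=\vv$. (ii) $(\mathcal{P}^{<\om}F_\vv(X),\cup,\emptyset)$ is a semilattice with least element $\emptyset$, and each complex operation $\om$ distributes over $\cup$ — this is the J\'onsson--Tarski fact recalled in the Extended Power Algebras example. (iii) The absorption law $\om(T_1,\dots,T_n)=\emptyset$ whenever some $T_i=\emptyset$ holds by the very definition \eqref{sec3:eq1} of the complex operation on the empty set. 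Together these three points say precisely that $(\mathcal{P}^{<\om}F_\vv(X),\Om,\cup,\emptyset)$ is a $0$-semilattice ordered $\vv$-algebra, i.e.\ an object of $\mm_\vv^0$.

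Having established membership, I would finish as follows. The set $X$ (identified with the singletons $\{x\}$, $x\in X$) generates $(\mathcal{P}^{<\om}F_\vv(X),\Om,\cup,\emptyset)$: every nonempty finite subset $T=\{t_1,\dots,t_k\}$ of $F_\vv(X)$ equals $\{t_1\}\cup\dots\cup\{t_k\}$, each $\{t_i\}$ lies in the $\Om$-subreduct generated by the singletons because $t_i$ is an $\Om$-term in $X$ and complex operations on singletons reproduce the underlying term operation, and $\emptyset$ is the nullary constant. Combined with the universality property from Corollary \ref{cor:up2}, this shows that $(\mathcal{P}^{<\om}F_\vv(X),\Om,\cup,\emptyset)$ satisfies the defining universal property of the free object over $X$ in $\mm_\vv^0$, hence is free there.

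The main obstacle — indeed the only nonroutine point — is step (i): one must use \emph{regularity} of the identities in an essential way. For a general variety $\vv$, even one defined by linear identities, the $\emptyset$-extended power algebra need not lie in $\vv$ (only the nonempty-subset version behaves well, cf.\ Theorem \ref{gl} versus Theorem \ref{gl2}); the empty subset can violate a non-regular linear identity such as $\om(x,y)\approx x$, since $\om(\{x\},\emptyset)=\emptyset\neq\{x\}$. Regularity is exactly what Theorem \ref{gl2} and Corollary \ref{gl3} require so that $\vv=\vv\Sigma^{<\om}$, and this is where the hypothesis on $\vv$ is consumed. Everything else — distributivity, the $0$-absorption law, and the passage from universality to freeness — is either quoted verbatim from earlier results or is a direct unwinding of definitions, paralleling the proof of Corollary \ref{sec4:cor1} and Theorem \ref{sec4:thm3}.
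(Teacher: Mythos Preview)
Your proposal is correct and follows precisely the approach the paper sets up: the unnumbered corollary immediately preceding Theorem~\ref{sec4:thm3} states that $(\mathcal{P}^{<\om}F_\vv(X),\Om,\cup,\emptyset)$ is free in $\mm_\vv^0$ if and only if it belongs to $\mm_\vv^0$, and Corollary~\ref{gl3} (the regular-linear case $\vv=\vv\Sigma^{<\om}$) is exactly what verifies membership of the $\Om$-reduct in $\vv$. The paper states Theorem~\ref{sec4:thm4} without an explicit proof, treating it as the direct $0$-analogue of Theorem~\ref{sec4:thm3}; your write-up simply unpacks that analogy, and your remark on why regularity (not just linearity) is essential is spot on.
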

For a variety $\vv$ let $\vv^{*}$ be its {\bf\emph{linearization}}, the
variety defined by all linear identities satisfied in $\vv$.
Obviously, $\vv^{*}$ contains $\vv$ as a subvariety.

Since by Theorem \ref{gl} and Corollary \ref{gl1} for any subvariety
$\vv\subseteq\mho$ the algebra $(\wpf F_{\vv}(X),\Om)$ satisfies only those identities
resulting through identification of variables from the linear identities true in $\vv$, then for each subvariety $\vv\subseteq\mho$, the algebra $(\wpf F_{\vv}(X),\Om)$ belongs to $\vv^*$,
but it does not belong to any its proper subvariety.

\begin{cor}\label{sec4:cor2}
Let $X$ be an infinite set. For any subvariety $\vv\subseteq\typ$, we have
\begin{equation*}
\mm_{\vv^{*}}={\rm HSP}((\wpf F_{\vv^{*}}(X),\Om,\cup)).
\end{equation*}
\end{cor}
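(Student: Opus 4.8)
The plan is to establish the two inclusions $\mm_{\vv^{*}} \subseteq {\rm HSP}((\wpf F_{\vv^{*}}(X),\Om,\cup))$ and $\mm_{\vv^{*}} \supseteq {\rm HSP}((\wpf F_{\vv^{*}}(X),\Om,\cup))$ separately. The reverse inclusion is the easy one: by Theorem~\ref{sec4:thm3} (applied to the variety $\vv^{*}$, which by definition is defined by a set of linear identities), the algebra $(\wpf F_{\vv^{*}}(X),\Om,\cup)$ is free over $X$ in $\mm_{\vv^{*}}$, hence it belongs to $\mm_{\vv^{*}}$; since $\mm_{\vv^{*}}$ is a variety, ${\rm HSP}$ of any of its members stays inside it.

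For the forward inclusion the key point is that, with $X$ infinite, the free algebra $(\wpf F_{\vv^{*}}(X),\Om,\cup)$ is \emph{large enough to be free on every cardinality of generating set needed}. Concretely, I would argue as follows. Let $(A,\Om,+) \in \mm_{\vv^{*}}$ be arbitrary. First reduce to the case that $A$ is generated (as a semilattice ordered algebra) by a set of cardinality at most $|X|$: every member of $\mm_{\vv^{*}}$ is a homomorphic image of a coproduct (disjoint union of generators) of such algebras, and ${\rm HSP}$ is closed under the relevant operations, so it suffices to handle algebras generated by a set $Y$ with $|Y|\le|X|$. For such an $A$, pick a surjection $X \to Y$ (using $X$ infinite and $|Y|\le |X|$) and compose with the inclusion $Y \hookrightarrow A$ to get a map $h\colon X \to A$; by the Universality Property, Theorem~\ref{thm:up1}, $h$ extends to a homomorphism $\overline{\overline{h}}\colon (\wpf F_{\vv^{*}}(X),\Om,\cup) \to (A,\Om,+)$, which is surjective because its image contains $Y$ and $Y$ generates $A$. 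Hence $A$ is a homomorphic image of $(\wpf F_{\vv^{*}}(X),\Om,\cup)$, so $A \in {\rm HSP}((\wpf F_{\vv^{*}}(X),\Om,\cup))$.

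The one subtlety to get right is the first reduction step: strictly speaking ${\rm HSP}$ of a single algebra is closed under subalgebras, homomorphic images and products, but I am also implicitly using that every algebra in the variety $\mm_{\vv^{*}}$ is a quotient of a ``big enough free'' algebra, which is exactly the content of the universality property once one knows the target is generated by $\le |X|$ elements. If one prefers to avoid the cardinality bookkeeping, an alternative is: $\mm_{\vv^{*}}$ is generated (as a variety) by its free algebras $(\wpf F_{\vv^{*}}(Z),\Om,\cup)$ over all sets $Z$, and for $X$ infinite each such free algebra on a set $Z$ with $|Z|\le|X|$ is a quotient of $(\wpf F_{\vv^{*}}(X),\Om,\cup)$ (split the surjection $X\twoheadrightarrow Z$), while free algebras on larger sets are subalgebras of products of the smaller ones; either way ${\rm HSP}((\wpf F_{\vv^{*}}(X),\Om,\cup))$ captures all of them. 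The expected main obstacle is purely this: making precise that ``$X$ infinite'' lets one realize every member of $\mm_{\vv^{*}}$ as a homomorphic image of the fixed algebra $(\wpf F_{\vv^{*}}(X),\Om,\cup)$, rather than needing a family of free algebras indexed by arbitrarily large sets. Once Theorem~\ref{sec4:thm3} is in hand, no further computation with identities is required.
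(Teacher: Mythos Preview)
Your proposal is correct and matches the paper's intended argument: the paper states the result as an immediate corollary (with no explicit proof), relying on Theorem~\ref{sec4:thm3} to identify $(\wpf F_{\vv^{*}}(X),\Om,\cup)$ as the free $\mm_{\vv^{*}}$-algebra over $X$, together with the standard universal-algebra fact that a free algebra on an infinite generating set generates its variety. Your forward-inclusion argument works, though it can be shortened to the usual identity-theoretic one: since $X$ is infinite, every identity holding in the free algebra $(\wpf F_{\vv^{*}}(X),\Om,\cup)$ already holds throughout $\mm_{\vv^{*}}$, so ${\rm HSP}$ of that single algebra recovers the whole variety without any cardinality bookkeeping.
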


Let $\mm$ be a non-trivial subvariety of
$\mm_{\vv}$ and $X$ be a set. By \cite[Chapter 3.3]{RS02} the
congruence
$$\Phi_{\mm}(X):=\bigcap\{\phi\in Con(\wpf F_{\vv}(X),\Om,\cup)\mid
(\wpf F_{\vv}(X)/{\phi},\Om,\cup)\in \mm\}$$
is the
$\mm$-{\bf\emph{replica congruence}} of $(\wpf F_{\vv}(X),\Om,\cup)$ and
$(\wpf F_{\vv}(X)/{\Phi_{\mm}(X)}, \Om,\cup)$ is called the
$\mm$-{\bf\emph{replica}} of $(\wpf F_{\vv}(X),\Om,\cup)$.

Let $(B,\Om,+)\in \mm$. By the universality property of replication
(see \cite[Lemma 3.3.1.]{RS02}), for each homomorphism
$\overline{\overline{h}}\colon(\wpf F_{\vv}(X),\Om,\cup)\to
(B,\Om,+)$, there is a unique homomorphism $$\widehat{h}\colon(\wpf
F_{\vv}(X)/{\Phi_{\mm}(X)},\Om,\cup)\to (B,\Om,+)$$ such that
\begin{equation*}
\overline{\overline{h}}=\widehat{h}\circ nat \Phi_{\mm}(X),
\end{equation*}
where $nat\Phi_{\mm}(X)$ is the natural projection onto the quotient
$\wpf F_{\vv}(X)/{\Phi_{\mm}(X)}$. Hence, the universality property for $(\wpf
F_{\vv}(X),\Om,\cup)$ yields the following commuting diagram for any
mapping $h\colon X\to B$: \begin{center}
\setlength{\unitlength}{1mm}
\begin{picture}(80,25)
\put(0,17){\makebox(0,0){$X$}} \put(25,17){\makebox(0,0){$(\wpf
F_{\vv}(X),\Om,\cup)$}} \put(82,17){\makebox(0,0){$(\wpf
F_{\vv}(X)/{\Phi_{\mm}(X)},\Om,\cup)$}}
\put(25,2){\makebox(0,0){$(B,\Om,+)$}}
\put(5,17){\makebox(0,0){$\hookrightarrow$}}
\put(40,17){\vector(1,0){20}} \put(25,14){\vector(0,-1){7}}
\put(2,14){\vector(1,-1){8}} \put(60,14){\vector(-2,-1){20}}
\put(10,10){\makebox(0,0){$h$}}
\put(40,10){\makebox(0,0){$\widehat{h}$}}
\put(20,10){\makebox(0,0){$\overline{\overline{h}}$}}
\put(5,20){\makebox(0,0){$i$}}
\put(50,20){\makebox(0,0){$nat\Phi_{\mm}(X)$}}
\end{picture}
\end{center}
As a result, we obtain

\begin{thm}\label{sec4:thm5} The $\mm$-replica of the algebra
$(\wpf F_{\vv}(X),\Om,\cup)$ is free over a set $X$ in the variety
$\mm\subseteq \mm_{\vv}$.
\end{thm}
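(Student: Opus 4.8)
The plan is to verify directly the universal mapping property: we must show that the $\mm$-replica $(\wpf F_{\vv}(X)/{\Phi_{\mm}(X)},\Om,\cup)$ belongs to $\mm$, and that every map $h\colon X\to B$ with $(B,\Om,+)\in\mm$ extends uniquely to a homomorphism from the replica. Both pieces are essentially already assembled in the discussion immediately preceding the statement, so the proof is short; the task is mainly to package that discussion correctly.

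First I would note that $(\wpf F_{\vv}(X)/{\Phi_{\mm}(X)},\Om,\cup)$ lies in $\mm$. This is because $\mm$, being a variety, is closed under arbitrary subdirect products, and $\wpf F_{\vv}(X)/{\Phi_{\mm}(X)}$ is by construction the subdirect product of all quotients $\wpf F_{\vv}(X)/\phi$ that lie in $\mm$ (equivalently, one invokes the standard replication fact from \cite[Chapter 3.3]{RS02} that the replica relative to a variety $\mm$ always belongs to $\mm$). Here one should double-check that $\mm$ is non-trivial so that at least one such $\phi$ exists and $\Phi_{\mm}(X)$ is not the improper congruence collapsing everything — but this is exactly the hypothesis ``$\mm$ a non-trivial subvariety'' recorded just before the theorem.

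Next, for the universal property: given $(B,\Om,+)\in\mm$ and $h\colon X\to B$, Theorem \ref{thm:up1} (the Universality Property for Semilattice Ordered Algebras) yields a unique homomorphism $\overline{\overline{h}}\colon(\wpf F_{\vv}(X),\Om,\cup)\to(B,\Om,+)$ with $\overline{\overline{h}}|_X=h$. Since $B\in\mm$, the kernel of $\overline{\overline{h}}$ is one of the congruences $\phi$ appearing in the intersection defining $\Phi_{\mm}(X)$, hence $\Phi_{\mm}(X)\subseteq\ker\overline{\overline{h}}$; equivalently, invoke directly the universality property of replication \cite[Lemma 3.3.1]{RS02} to factor $\overline{\overline{h}}=\widehat{h}\circ nat\,\Phi_{\mm}(X)$ through a unique $\widehat{h}\colon(\wpf F_{\vv}(X)/{\Phi_{\mm}(X)},\Om,\cup)\to(B,\Om,+)$. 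Composing with the inclusion $i\colon X\hookrightarrow\wpf F_{\vv}(X)$ and the natural projection, the image of $X$ in the replica generates it (it already generates $\wpf F_{\vv}(X)$, and generation is preserved by surjections), so $\widehat{h}$ is determined on generators and therefore unique; its existence and the identity $\widehat{h}|_X=h$ are read off from the commuting diagram displayed above. This establishes that the replica is free over $X$ in $\mm$.

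I do not expect a genuine obstacle here: the only point requiring a little care is making sure the correct generating set is used — the replica is generated by the image of $X$ under $nat\,\Phi_{\mm}(X)\circ i$, not by $X$ literally — and that one does not secretly need $X$ to be infinite (unlike in Corollary \ref{sec4:cor2}, the present statement holds for every $X$, since the argument is purely diagrammatic and uses only the already-proven universality of $\wpf F_{\vv}(X)$). So the proof is: (1) the replica is in $\mm$ because $\mm$ is a variety; (2) Theorem \ref{thm:up1} plus the universality of replication gives existence of $\widehat{h}$; (3) surjectivity of $nat\,\Phi_{\mm}(X)$ onto a set of generators gives uniqueness.
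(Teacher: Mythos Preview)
Your proposal is correct and follows essentially the same route as the paper: the paper's proof is precisely the discussion preceding the theorem (the commuting diagram combining Theorem~\ref{thm:up1} with the universality of replication from \cite[Lemma 3.3.1]{RS02}), and you have simply made explicit the points the paper leaves to that citation, namely that the replica lies in $\mm$ and that uniqueness follows from surjectivity onto generators.
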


\begin{cor}\label{sec4:cor3}
Let $(\wpf F_{\vv}(X),\Om,\cup)\in \mm$. Then it is free in
$\mm\subseteq \mm_{\vv}$ over a set $X$.
\end{cor}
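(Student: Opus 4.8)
The plan is to derive this directly from Theorem \ref{sec4:thm5} by observing that, under the hypothesis $(\wpf F_{\vv}(X),\Om,\cup)\in\mm$, the $\mm$-replica congruence is trivial. Indeed, $\Phi_{\mm}(X)$ was defined as the intersection of all $\phi\in Con(\wpf F_{\vv}(X),\Om,\cup)$ whose quotient lies in $\mm$; since the algebra $(\wpf F_{\vv}(X),\Om,\cup)$ itself belongs to $\mm$, the diagonal (identity) congruence $\Delta$ is one of the congruences being intersected, whence $\Phi_{\mm}(X)=\Delta$. Consequently the natural projection $nat\,\Phi_{\mm}(X)$ is an isomorphism, so the $\mm$-replica $(\wpf F_{\vv}(X)/{\Phi_{\mm}(X)},\Om,\cup)$ is isomorphic to $(\wpf F_{\vv}(X),\Om,\cup)$ as a semilattice ordered algebra, and this isomorphism is the identity on $X$.

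By Theorem \ref{sec4:thm5}, the $\mm$-replica is free over $X$ in $\mm$. Transporting the universal property across the isomorphism above, $(\wpf F_{\vv}(X),\Om,\cup)$ is free over $X$ in $\mm$ as well. Alternatively, one may argue directly: given any $(B,\Om,+)\in\mm\subseteq\mm_{\vv}$ and any map $h\colon X\to B$, Theorem \ref{thm:up1} furnishes a unique extension to a homomorphism $\overline{\overline{h}}\colon(\wpf F_{\vv}(X),\Om,\cup)\to(B,\Om,+)$ with $\overline{\overline{h}}|_X=h$; since $B$ ranges over all of $\mm$, this says precisely that $(\wpf F_{\vv}(X),\Om,\cup)\in\mm$ has the universal mapping property in $\mm$ over $X$, i.e.\ it is free there.

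There is essentially no obstacle here: the only point requiring any care is the verification that the inclusion $X\hookrightarrow\wpf F_{\vv}(X)$ is preserved — that is, that $nat\,\Phi_{\mm}(X)$ restricts injectively to $X$, or equivalently that the singletons $\{x\}$ for $x\in X$ remain distinct in the replica — and this is immediate from $\Phi_{\mm}(X)=\Delta$. Thus the corollary follows.
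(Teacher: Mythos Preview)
Your proof is correct and follows exactly the intended route: the paper states this as an immediate corollary of Theorem~\ref{sec4:thm5} without a separate proof, the implicit reasoning being precisely that membership of $(\wpf F_{\vv}(X),\Om,\cup)$ in $\mm$ forces $\Phi_{\mm}(X)=\Delta$, so the replica is the algebra itself. Your alternative direct argument via Theorem~\ref{thm:up1} is equally valid and is in fact the mechanism behind the earlier Corollary~\ref{sec4:cor1}.
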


Additionally, by Theorem \cite[Theorem 3.9]{PZ11} one can obtain a characterization of free algebras in
the quasivariety $\typ_\mm$ of $\Om$-subreducts of semilattice ordered algebras in a given variety $\mm$.

\begin{cor}\label{L:free01}
The free algebra $(F_{\typ_\mm}(X),\Om)$ over $X$ in $\typ_\mm$ is
isomorphic to the full $\Om$-subreduct $\langle X\rangle_{\Om}$ of
the free semilattice ordered algebra $(F_{\mm}(X),\Om,+)$ in $\mm$.
\end{cor}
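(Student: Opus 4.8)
The plan is to invoke the cited result \cite[Theorem 3.9]{PZ11}, which characterizes the free algebra in the quasivariety of $\Om$-subreducts of semilattice ordered algebras from a variety as the full $\Om$-subreduct of the free semilattice ordered algebra, once the relevant hypotheses are verified; so the real work is checking that the setup here is an instance of that theorem and that $\langle X\rangle_{\Om}$ genuinely has the required universal property. First I would recall that $(F_{\mm}(X),\Om,+)$ exists and is generated (as a semilattice ordered algebra) by $X$ — this uses only that $\mm$ is a variety of semilattice ordered algebras, so free objects exist. Then I would identify $(\langle X\rangle_{\Om},\Om)$, the full $\Om$-subreduct relative to $X$, as defined just before Lemma \ref{slowa}: it is the $\Om$-subalgebra of the $\Om$-reduct of $F_{\mm}(X)$ generated by $X$, and it belongs to $\typ_\mm$ by definition of that quasivariety (it is a subalgebra of an $\Om$-reduct of a member of $\mm$).

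The core step is the universal property. Given any $(A,\Om)\in\typ_\mm$ and any map $h\colon X\to A$, I want a unique $\Om$-homomorphism $\langle X\rangle_{\Om}\to (A,\Om)$ extending $h$. Uniqueness is immediate since $X$ generates $\langle X\rangle_{\Om}$ as an $\Om$-algebra. For existence, by definition of $\typ_\mm$ there is a semilattice ordered algebra $(B,\Om,+)\in\mm$ such that $(A,\Om)$ embeds into (a quotient of) an $\Om$-reduct of $B$; composing $h$ with that embedding and using the freeness of $F_{\mm}(X)$ in $\mm$, we get a semilattice ordered homomorphism $F_{\mm}(X)\to B$ extending (the composite of) $h$, whose restriction to the $\Om$-subreduct $\langle X\rangle_{\Om}$ is an $\Om$-homomorphism into the $\Om$-reduct of $B$; one then checks its image lands in the appropriate subalgebra so that it corestricts to $(A,\Om)$. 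This is precisely the argument carried out in \cite[Theorem 3.9]{PZ11}, so I would cite it rather than reprove it; the point of this corollary is merely that the hypotheses of that theorem — $\mm$ a variety of semilattice ordered algebras, $\typ_\mm$ its quasivariety of $\Om$-subreducts — are exactly what we have here.

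The main obstacle, such as it is, is bookkeeping about what "full $\Om$-subreduct relative to $X$" means and making sure the element $X$ of $F_{\mm}(X)$ is a free generating set whose $\Om$-generated subalgebra is what \cite[Theorem 3.9]{PZ11} calls $\langle X\rangle$; once that identification is made the result is a direct translation. I would therefore present the proof as a short paragraph: note $\langle X\rangle_{\Om}\in\typ_\mm$, note it is $\Om$-generated by $X$, and apply \cite[Theorem 3.9]{PZ11} to conclude it is free over $X$ in $\typ_\mm$.
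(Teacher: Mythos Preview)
Your proposal is correct and matches the paper's approach exactly: the paper states this corollary without proof, deriving it directly from \cite[Theorem 3.9]{PZ11} as announced in the sentence preceding it. Your expanded sketch of how that theorem applies (checking $\langle X\rangle_{\Om}\in\typ_{\mm}$, that it is $\Om$-generated by $X$, and then verifying the universal property via freeness of $F_{\mm}(X)$) is more detailed than what the paper provides but is precisely the content of the cited result.
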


Free algebras in a quasivariety $\typ_\Om$ are also free in
the variety $V(\typ_\Om)$ generated by $\typ_\Om$ (see
\cite{M73}). But note that even if we have a free semilattice ordered algebra
$(F_{\mm}(X),\Om,+)$ in a given quasivariety
$\mm\subseteq\mathcal{SV}$, its full $\Om$-subreduct $(\langle
X\rangle_{\Om},\Om)$ needn't be a free algebra in $\vv$.
\section{Applications}\label{sec:cdi}
\subsection{Idempotent SLO algebras}\label{sec:idem}
As we have already shown in Section \ref{sec:free}, identities true
in varieties of semilattice ordered algebras are determined by appropriate
extended power algebras of algebras and their homomorphic images. Entropic and symmetric identities are both linear and regular. In this section we focus on
the idempotent identities which are linear only if the operation $\om$ occuring there is unary.

Extended power algebras are very rarely idempotent. Note that if the
power algebra $(\mathcal{P}_{>0}A,\Om)$ of
$(A,\Om)$ is idempotent then the algebra $(A,\Om)$ must be
idempotent too. Further, if $(A,\Om)$ is idempotent then for any
non-empty subset $B$ of $A$ and $\om\in\Om$, we have $B\subseteq
\om(B,\ldots, B)$. Moreover, as an easy consequence of results of A.
Romanowska and J.D.H. Smith \cite[Proposition 2.1]{RS89} for an
idempotent algebra $(A,\Om)$, a non-empty subset
$B\in\mathcal{P}_{>0}A$ is a subalgebra of $(A,\Om)$ if and only if
$\om(B,\ldots,B)=B$ for each $\om\in \Om$.

\begin{cor}\cite{PZ15}\label{cor2}
The power algebra $(\mathcal{P}_{>0}A,\Om)$ of an idempotent algebra
$(A,\Om)$ is idempotent if and only if each non-empty subset $B$ of
$A$ is a subalgebra of $(A,\Om)$.
\end{cor}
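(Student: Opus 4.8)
The plan is to prove both directions of the equivalence, using the description of subalgebras of idempotent algebras recalled just before the statement, namely that for idempotent $(A,\Om)$ a non-empty $B\subseteq A$ is a subalgebra if and only if $\om(B,\ldots,B)=B$ for every $\om\in\Om$.

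First I would handle the forward direction. Assume $(\mathcal{P}_{>0}A,\Om)$ is idempotent. Then for every non-empty $B\subseteq A$ and every $n$-ary $\om\in\Om$ we have $\om(B,\ldots,B)=B$ directly from the idempotency identity applied to the singleton-join-free element $B$ of $\mathcal{P}_{>0}A$. By the quoted consequence of \cite[Proposition 2.1]{RS89}, this says exactly that $B$ is a subalgebra of $(A,\Om)$. Since $B$ was an arbitrary non-empty subset, every non-empty subset of $A$ is a subalgebra.

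For the converse, assume every non-empty subset of $A$ is a subalgebra; in particular each singleton is a subalgebra, so $(A,\Om)$ is idempotent. Fix $\om\in\Om$ of arity $n\neq 0$ and a non-empty $B\subseteq A$. Since $B$ is a subalgebra, $\om(b,\ldots,b)=b$ for each $b\in B$ (idempotency of $(A,\Om)$), which gives $B\subseteq\om(B,\ldots,B)$; and $B$ being a subalgebra also gives $\om(B,\ldots,B)\subseteq B$ by closure, so $\om(B,\ldots,B)=B$. As this holds for all $\om\in\Om$, the power algebra $(\mathcal{P}_{>0}A,\Om)$ satisfies $\om(B,\ldots,B)=B$ for every element $B$, i.e. it is idempotent.

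There is essentially no obstacle here: the statement is a direct repackaging of the Romanowska--Smith characterization of subalgebras of idempotent algebras together with the definition of the complex operation, and the only point requiring a line of care is noting that $B\subseteq\om(B,\dots,B)$ uses idempotency of $(A,\Om)$ (which follows from every singleton being a subalgebra), so that the subalgebra condition $\om(B,\dots,B)=B$ is equivalent to the complex-operation idempotency $\om(B,\dots,B)=B$ in $\mathcal{P}_{>0}A$.
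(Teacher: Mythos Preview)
Your argument is correct and follows exactly the line the paper sets up in the paragraph preceding the corollary: both directions are immediate from the Romanowska--Smith characterization that, for idempotent $(A,\Om)$, a non-empty $B\subseteq A$ is a subalgebra iff $\om(B,\ldots,B)=B$ for every $\om\in\Om$. Two minor remarks: the hypothesis already assumes $(A,\Om)$ is idempotent, so your step recovering this from ``every singleton is a subalgebra'' is unnecessary (though harmless); and the phrase ``singleton-join-free element'' is unclear---you simply mean the element $B\in\mathcal{P}_{>0}A$.
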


\begin{exm}\cite{PZ15}
An algebra $(A,\Om)$ such that $\om(a_1,\ldots,a_n)\in
\{a_1,\ldots,a_n\}$, for each $n$-ary $\om\in\Om$ and
$a_1,\ldots,a_n\in A$, is called {\bf\emph{conservative}}. By Corollary
\ref{cor2}, the power algebra of any conservative algebra is always
idempotent. In particular, the power algebra of a chain, the power
algebra of a left zero-semigroup \cite{RS02}, the power algebra of
an equivalence algebra \cite{JM01} or the power algebra of a
tournament \cite{JMMM98} are all idempotent.
\end{exm}

Let $\theta$ be a congruence on an idempotent algebra $(A,\Om)$.
Obviously, $a\;\theta\; \om(a,\ldots,a)$ for each $a\in A$ and
$\om\in\Om$. On the other hand, it is not always true that
$X\;\theta\;\om(X,\ldots,X)$ for a subset $X$ of $A$, if
$(\mathcal{P}_{>0}A,\Om)$ is not idempotent. It is enough to
consider the equality relation on $(A,\Om)$ in such case.

Let $(M,\Om)$ be an idempotent and entropic algebra. Denote by $\mathcal{I}$ the variety of all
idempotent $\tau$-algebras of type
$\tau:\Om\sucdot\;\{\cup\}\rightarrow\mathbb N^+$.
Then
$Con_{\mathcal{I}}(\mathcal{P}_{>0}^{<\om}M)$ is the set of all congruence
relations $\gamma$ on $(\mathcal{P}_{>0}^{<\om}M,\Om,\cup)$, such that the
quotient $(\mathcal{P}_{>0}^{<\om}M^{\gamma},\Om)$ is idempotent. By
\cite[Section 1.4.3]{RS02} $Con_{\mathcal{I}}(\mathcal{P}_{>0}^{<\om}M)$ is
an algebraic subset of the lattice of all congruences of
$(\mathcal{P}_{>0}^{<\om}M,\Om,\cup)$. Recall that the least element in
$(Con_{\mathcal{I}}(\mathcal{P}_{>0}^{<\om}M),\subseteq)$ is
the $\mathcal{I}$-replica congruence of
$(\mathcal{P}_{>0}^{<\om}M,\Om,\cup)$.

Let $(M,\Om,1)$ be an idempotent and entropic algebra with the unit $1\in M$ and let the algebra
$(\mathcal{P}^{<\om}M,\Om,\cup,\emptyset,\{1\})$ be the $\emptyset$-extended power algebra with the unit $\{1\}$.
Let us define a binary relation $\rho$ on the set
$\mathcal{P}^{<\om}M$ in the following way:
\begin{eqnarray}\label{eqn3} A\; \rho \; B &\Leftrightarrow
&\text{there exist a $k$-ary
term $t$ and an $m$-ary term $s$}\\ &&\text{both of type $\Om$ such that}\nonumber\\
&&A\subseteq t(B,B,\ldots,B)\;\;\; {\rm and} \; \; \; B\subseteq
s(A,A,\ldots,A).\nonumber
\end{eqnarray}

It was proved in \cite{PZ12b} that the relation $\rho|_{\mathcal{P}_{<0}^{<\om}M}$ is the
$\mathcal{I}$-replica congruence of $(\mathcal{P}^{<\om}_{>0}M,\Om,\cup)$ and is equal to the relation:
\begin{eqnarray}\label{eqn4}
A\; \alpha \; B \;\; &\Leftrightarrow \;\; \langle A \rangle =
\langle B \rangle,
\end{eqnarray}
where $\langle A \rangle$ is the subalgebra of $(M,\Om)$ generated by the set $A$.
Therefore, $(\mathcal{P}^{<\om}_{>0}M/\rho,\Om,\cup)\cong(\{\langle A \rangle\colon A\in \mathcal{P}^{<\om}_{>0}M\},\Om,+)$, where
for each $n$-ary complex operation $\om\in \Om$ and non-empty subsets
$A_1,\ldots,A_n$ of $M$
\begin{equation}\label{gen}
\om(\langle A_1\rangle,\ldots,\langle A_n\rangle) =\langle\om(A_1,\ldots,A_n)\rangle ,\; {\rm and}
\end{equation}
\begin{equation}\label{eq1}\langle A_1 \rangle+\langle A_2 \rangle=\langle A_1\cup A_2\rangle.
\end{equation}

It is easy to notice that
\[
\emptyset\; \rho \; A \quad\Leftrightarrow\quad A=\emptyset
\]
and
\[
\{1\}\; \rho \; A \quad\Leftrightarrow\quad A=\{1\}.
\]
Hence, $\rho$ is also the $\mathcal{I}$-replica congruence of $(\mathcal{P}^{<\om}M,\Om,\cup)$ and
$(\mathcal{P}^{<\om}M/\rho,\Om,\cup)\cong(\{\langle A \rangle\colon A\in \mathcal{P}^{<\om}M\},\Om,+)$,
with assumption that $\langle \emptyset\rangle =\emptyset$.

By Theorem \ref{sec4:thm5} we have the following
\begin{thm}
Let $\mathcal{M}$ be the variety of all idempotent and entropic $\Om$-algebras $(M,\Om)$. The $0$-semilattice ordered algebra
$(\{\langle A \rangle\colon A\in \mathcal{P}^{<\om}F_{\mathcal{M}}(X)\},\Om,+,\emptyset)$
is free over a set $X$ in the variety $\mm_{\mathcal{M}}^0$.
\end{thm}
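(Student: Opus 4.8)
The plan is to realise the stated algebra as the $\mm_{\mathcal{M}}^{0}$-replica of the $\emptyset$-extended power algebra over $F_{\mathcal{M}}(X)$, exactly as Theorem \ref{sec4:thm5} does in the non-empty case. By Corollary \ref{cor:up2}, the algebra $(\mathcal{P}^{<\om}F_{\mathcal{M}}(X),\Om,\cup,\emptyset)$ has the universality property for $0$-semilattice ordered $\mathcal{M}$-algebras; but, since idempotency is not a linear regular identity, this algebra need not itself lie in $\mm_{\mathcal{M}}^{0}$ (cf. Example \ref{sec4:exm1}), so in general it is not the free object. The first step is to record the $0$-analogue of Theorem \ref{sec4:thm5}: for any variety $\vv\subseteq\typ$ and any non-trivial subvariety $\mm\subseteq\mm_{\vv}^{0}$, the $\mm$-replica $(\mathcal{P}^{<\om}F_{\vv}(X)/\Phi_{\mm}(X),\Om,\cup,\emptyset)$ of $(\mathcal{P}^{<\om}F_{\vv}(X),\Om,\cup,\emptyset)$ is free over $X$ in $\mm$, where
\[\Phi_{\mm}(X):=\bigcap\{\phi\in Con(\mathcal{P}^{<\om}F_{\vv}(X),\Om,\cup,\emptyset)\mid (\mathcal{P}^{<\om}F_{\vv}(X)/\phi,\Om,\cup,\emptyset)\in\mm\}.\]
Its proof is the diagram chase preceding Theorem \ref{sec4:thm5} verbatim, using Corollary \ref{cor:up2} in place of Theorem \ref{thm:up1} and the universality property of replication \cite[Lemma 3.3.1]{RS02}; that $\Phi_{\mm}(X)$ actually lies in $\mm$ follows because $\mm$, being a variety, is closed under subdirect products.

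The main work is to identify $\Phi_{\mm_{\mathcal{M}}^{0}}(X)$ with the relation $\rho$ of \eqref{eqn3} on $\mathcal{P}^{<\om}F_{\mathcal{M}}(X)$. Since the entropic laws are linear and regular and hold in $\mathcal{M}$, Theorem \ref{gl2} (equivalently, the fact that complex operations of commuting operations commute) shows that the $\Om$-reduct $(\mathcal{P}^{<\om}F_{\mathcal{M}}(X),\Om)$ is already entropic. Hence, for every congruence $\phi$ of $(\mathcal{P}^{<\om}F_{\mathcal{M}}(X),\Om,\cup,\emptyset)$ the quotient is automatically a $0$-semilattice ordered algebra with entropic $\Om$-reduct, so it belongs to $\mm_{\mathcal{M}}^{0}$ if and only if its $\Om$-reduct is idempotent, i.e. if and only if $\phi\in Con_{\mathcal{I}}(\mathcal{P}^{<\om}F_{\mathcal{M}}(X))$. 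Consequently $\Phi_{\mm_{\mathcal{M}}^{0}}(X)$ is exactly the $\mathcal{I}$-replica congruence of $(\mathcal{P}^{<\om}F_{\mathcal{M}}(X),\Om,\cup,\emptyset)$; and because $F_{\mathcal{M}}(X)$ is itself idempotent and entropic, the result of \cite{PZ12b} recalled above — together with the observation $\emptyset\;\rho\;A\Leftrightarrow A=\emptyset$ — identifies this $\mathcal{I}$-replica congruence with $\rho$.

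Combining the two steps, the free algebra over $X$ in $\mm_{\mathcal{M}}^{0}$ is $(\mathcal{P}^{<\om}F_{\mathcal{M}}(X)/\rho,\Om,\cup,\emptyset)$, which by \eqref{gen} and \eqref{eq1}, with the convention $\langle\emptyset\rangle=\emptyset$, is isomorphic to $(\{\langle A\rangle\colon A\in\mathcal{P}^{<\om}F_{\mathcal{M}}(X)\},\Om,+,\emptyset)$; this is the desired conclusion. All the routine bookkeeping — that $\rho$ is a congruence of the $\emptyset$-extended power algebra, that \eqref{gen} and \eqref{eq1} are well defined, and that $(\{\langle A\rangle\},\Om)$ is idempotent and entropic — is already available from the discussion preceding the theorem. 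I expect the only delicate points to be the transfer of Theorem \ref{sec4:thm5} to the $0$-setting and, above all, the verification that among quotients of the power algebra membership in $\mm_{\mathcal{M}}^{0}$ collapses to idempotency of the $\Om$-reduct (which is what makes the two replica congruences coincide); the degenerate case $X=\emptyset$, where everything reduces to the trivial algebra, requires only a line.
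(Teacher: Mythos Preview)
Your proposal is correct and follows essentially the same route as the paper: the paper derives the theorem directly from (the $0$-analogue of) Theorem~\ref{sec4:thm5} together with the preceding identification of the $\mathcal{I}$-replica congruence of $(\mathcal{P}^{<\om}F_{\mathcal{M}}(X),\Om,\cup)$ with $\rho$. You are more explicit than the paper about why the $\mm_{\mathcal{M}}^{0}$-replica congruence coincides with the $\mathcal{I}$-replica congruence (namely, because entropy and the $0$-semilattice axioms already hold in the power algebra, so idempotency is the only obstruction), but this is exactly the content the paper's one-line ``By Theorem~\ref{sec4:thm5}'' is tacitly invoking.
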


Moreover, for any $k$-ary term $t$ and a subset $1\notin S\subset M$
\[
\{1\}\cup S\subseteq t(S,\ldots,S)\quad\Leftrightarrow\quad \exists(s_1,\ldots,s_k\in S)\quad 1=t(s_1,\ldots,s_k).
\]
This shows that for a subset $\emptyset\neq S\subset M$ such that $1\notin S$
\begin{eqnarray*}
\{1\}\cup S\; \rho \; S &\Leftrightarrow
&\text{there exist a $k$-ary
term $t$ of type $\Om$ and $s_1,\ldots,s_k\in S$}\\
&&\text{such that} \; 1=t(s_1,\ldots,s_k).
\end{eqnarray*}

\begin{lm}
Let $(M,\Om,1)$ be an idempotent and entropic algebra with the unit $1\in M$. Let us assume that the algebra $(M,\Om,1)$ satisfies
the following condition:
\begin{align}\label{con:1}
\forall(\om\in\Om)\; \forall(x_1,\ldots,x_n\in M)\quad \om(x_1,\ldots,x_n)=1\quad \Rightarrow\quad \forall(1\leq i\leq n)\quad x_i=1.
\end{align}

Then  $\rho$ is the $\mathcal{I}$-replica congruence of $(\mathcal{P}^{<\om}_{>0}M,\Om,\cup,\{1\})$ and
\[
(\mathcal{P}^{<\om}_{>0}M/\rho,\Om,\cup,\{1\})\cong(\{\langle A \rangle\colon 1\notin A\in \mathcal{P}^{<\om}_{>0}M\}\cup\{\langle A\cup\{1\} \rangle\colon A\in \mathcal{P}_{>0}^{<\om}M\}\cup\{1\},\Om,+,\{1\}).
\]
\end{lm}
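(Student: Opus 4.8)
The plan is to verify directly that $\rho$, restricted to $\mathcal{P}^{<\om}_{>0}M$ and read together with the distinguished constant $\{1\}$, is the $\mathcal{I}$-replica congruence of the algebra $(\mathcal{P}^{<\om}_{>0}M,\Om,\cup,\{1\})$, and then to identify the quotient explicitly. We already know from \cite{PZ12b} (recalled above) that $\rho|_{\mathcal{P}^{<\om}_{>0}M}$ is a congruence of $(\mathcal{P}^{<\om}_{>0}M,\Om,\cup)$, that it coincides with the relation $\alpha$ given by $A\,\alpha\,B\Leftrightarrow\langle A\rangle=\langle B\rangle$, and that it is the least congruence whose quotient is idempotent. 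The extra work here is twofold: (i) show that $\rho$ is \emph{compatible with the nullary operation} $\{1\}$, i.e.\ that $\rho$ is a congruence of the \emph{expanded} algebra $(\mathcal{P}^{<\om}_{>0}M,\Om,\cup,\{1\})$ in the type $\tau\colon\Om\sucdot\{\cup,1\}\to\mathbb N$; and (ii) show it is still the \emph{least} such congruence with idempotent quotient in this expanded signature. Point (i) is automatic: any equivalence relation is compatible with a nullary operation, so adding the constant $\{1\}$ to the signature changes neither the congruence lattice nor the $\mathcal{I}$-replica congruence — the only thing to check is that the class of $\{1\}$ is a singleton, which is exactly the displayed computation preceding the lemma, valid because condition \eqref{con:1} forces $\{1\}\,\rho\,S$ to imply $1\in S$ and $S\subseteq t(\{1\},\ldots,\{1\})=\{1\}$. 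For (ii), the point is that the $\mathcal{I}$-replica congruence of the expanded algebra is the intersection of all congruences $\gamma$ with $(\mathcal{P}^{<\om}_{>0}M/\gamma,\Om,\cup)\in\mathcal I$; since the congruence lattices of the expanded and unexpanded algebras agree, this intersection is unchanged, and equals $\rho$ by \cite{PZ12b}.

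Next I would pin down the quotient. By \eqref{eqn4}--\eqref{eq1} we have $(\mathcal{P}^{<\om}_{>0}M/\rho,\Om,\cup)\cong(\{\langle A\rangle:A\in\mathcal{P}^{<\om}_{>0}M\},\Om,+)$ with operations $\om(\langle A_1\rangle,\ldots,\langle A_n\rangle)=\langle\om(A_1,\ldots,A_n)\rangle$ and $\langle A_1\rangle+\langle A_2\rangle=\langle A_1\cup A_2\rangle$, and under this isomorphism the constant $\{1\}/\rho$ corresponds to the subalgebra $\langle\{1\}\rangle=\{1\}$ (a singleton subalgebra since $1$ is a unit, hence idempotent for every operation). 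So the quotient is the algebra of finitely generated subalgebras of $(M,\Om)$, with distinguished constant $\{1\}$. It remains only to split the set $\{\langle A\rangle:A\in\mathcal{P}^{<\om}_{>0}M\}$ according to whether the generated subalgebra contains $1$: writing $B=\langle A\rangle$, either $1\notin B$, or $1\in B$. In the first case $B=\langle A\rangle$ with $1\notin A$ (taking $A=B$ itself works), giving the first family $\{\langle A\rangle:1\notin A\in\mathcal{P}^{<\om}_{>0}M\}$; in the second case, either $B=\{1\}$ (the last summand $\{1\}$), or $B\supsetneq\{1\}$, and then $B\setminus\{1\}$ is a non-empty finite subset with $B=\langle(B\setminus\{1\})\cup\{1\}\rangle$, giving the middle family $\{\langle A\cup\{1\}\rangle:A\in\mathcal{P}^{<\om}_{>0}M\}$. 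One checks these three families are pairwise disjoint and exhaust everything, so the displayed right-hand side is exactly the quotient.

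The only genuinely delicate point is the interplay between condition \eqref{con:1} and the description of $\rho$-classes, used both in (i) and in the disjointness of the three families. Concretely, \eqref{con:1} is what guarantees that if $t(s_1,\ldots,s_k)=1$ with $s_i\in S$ then in fact every $s_i=1$ — by induction on the complexity of the term $t$, using that $1$ is a unit at each step to strip off one layer of operations — and hence that $\{1\}\cup S\,\rho\,S$ forces $S=\{1\}$ when we additionally know $1\notin S$ is false, i.e.\ it prevents a ``small'' non-unit set from generating a subalgebra that swallows the unit. Without \eqref{con:1} the class of $\{1\}$ could be larger and the three-way split above would fail. Everything else is bookkeeping: transferring the known results of \cite{PZ12b} across the harmless addition of a nullary symbol, and translating \eqref{eqn4}--\eqref{eq1} into the stated normal form for the elements of the quotient.
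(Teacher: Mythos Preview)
The paper does not supply an explicit proof of this lemma; it is stated immediately after the preparatory discussion (the description of $\rho$, the equivalence $\rho=\alpha$, the observations $\{1\}\,\rho\,A\Leftrightarrow A=\{1\}$ and the characterization of $\{1\}\cup S\,\rho\,S$), and is meant to follow from that discussion together with condition~\eqref{con:1}. Your plan is precisely the natural way to assemble those ingredients into a proof, and it is correct in outline: adding a nullary symbol leaves the congruence lattice untouched, so the $\mathcal I$-replica congruence is unchanged and equals $\rho$ by \cite{PZ12b}; the quotient is then the algebra of finitely generated subalgebras with distinguished element $\{1\}$, and condition~\eqref{con:1} (via induction on terms) forces $1\in\langle A\rangle\Leftrightarrow 1\in A$, which yields the three-way split.

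Two small bookkeeping points. First, when you write ``$B\setminus\{1\}$ is a non-empty finite subset'' you are conflating $B=\langle A\rangle$ with its finite generating set: a finitely generated subalgebra need not be finite. The fix is immediate: once you know $1\in\langle A\rangle\Rightarrow 1\in A$, take $A\setminus\{1\}$ (which \emph{is} finite) rather than $B\setminus\{1\}$. Second, pairwise disjointness of the three families holds only if the middle family carries the restriction $1\notin A$ (as it does in the Theorem immediately following the lemma); in the lemma as literally stated that restriction is absent, so $\{1\}=\langle\{1\}\cup\{1\}\rangle$ already lies in the middle family and the final $\cup\{1\}$ is redundant. Neither point affects the validity of the argument.
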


\begin{thm}
Let $\mathcal{M}_1$ be the variety of all idempotent and entropic $\Om$-algebras $(M,\Om,1)$ with the unit $1$ which satisfy Condition \eqref{con:1}.

Then the semilattice ordered algebra
$(\{\langle A \rangle\colon 1\notin A\in \mathcal{P}^{<\om}_{>0}F_{\mathcal{M}}(X)\}\cup\{\langle A\cup\{1\} \rangle\colon 1\notin A\in \mathcal{P}_{>0}^{<\om}F_{\mathcal{M}}(X)\}\cup\{1\},\Om,+,\{1\})$
is free over a set $X$ in the variety $\mm_{\mathcal{M}_1}$.
\end{thm}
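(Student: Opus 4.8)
The plan is to follow the scheme used for Theorem~\ref{sec4:thm5} above: realise the free object as the $\mm_{\mathcal{M}_1}$-replica of a suitable extended power algebra, then identify that replica congruence with the relation $\rho$ of~\eqref{eqn3} via the preceding Lemma. Concretely, apply Corollary~\ref{cor:up3} with $\vv_1:=\mathcal{M}_1$: the algebra $P:=(\wpf F_{\mathcal{M}_1}(X),\Om,\cup,\{1\})$ has the universality property for $\mm_{\mathcal{M}_1}$, i.e.\ every $h\colon X\to A$ with $(A,\Om,+,1)\in\mm_{\mathcal{M}_1}$ extends uniquely to a homomorphism $\overline{\overline{h}}\colon P\to(A,\Om,+,1)$. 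As a rule $P\notin\mm_{\mathcal{M}_1}$, since its $\Om$-reduct need not be idempotent (cf.\ Example~\ref{sec4:exm1}). Running the replica / commuting-diagram argument that precedes Theorem~\ref{sec4:thm5} — it goes through unchanged once the unit $\{1\}$ is adjoined to the signature and Corollary~\ref{cor:up3} replaces Theorem~\ref{thm:up1} — yields that the $\mm_{\mathcal{M}_1}$-replica $P/\Phi$ is free over $X$ in $\mm_{\mathcal{M}_1}$, where $\Phi=\Phi_{\mm_{\mathcal{M}_1}}(X)$ is the $\mm_{\mathcal{M}_1}$-replica congruence of $P$.

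It then remains to prove $\Phi=\rho$. For $\Phi\subseteq\rho$ it suffices to check $P/\rho\in\mm_{\mathcal{M}_1}$. By~\eqref{gen} and~\eqref{eq1} the $\Om$-reduct $(\{\langle A\rangle\},\Om)$ of $P/\rho$ inherits the entropic identities from $F_{\mathcal{M}_1}(X)$; it is idempotent, since $A\subseteq\om(A,\dots,A)\subseteq\langle A\rangle$ forces $\langle\om(A,\dots,A)\rangle=\langle A\rangle$; it has $\langle\{1\}\rangle=\{1\}$ as a two-sided unit, because $\om(A,\{1\},\dots,\{1\})=A$ as a complex operation (directly from the definition, $1$ being a unit of $F_{\mathcal{M}_1}(X)$), so $\langle\om(A,\{1\},\dots,\{1\})\rangle=\langle A\rangle$, and likewise in the other places since $P$ is symmetric; and it satisfies Condition~\eqref{con:1}, because $\langle\om(A_1,\dots,A_n)\rangle=\langle\{1\}\rangle$ forces $\om(a_1,\dots,a_n)=1$ for all $a_i\in A_i$, whence $a_i=1$ by~\eqref{con:1} in $F_{\mathcal{M}_1}(X)$ and $A_i=\{1\}$. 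For the reverse inclusion: by the preceding Lemma (with $M:=F_{\mathcal{M}_1}(X)$) the relation $\rho$ is the $\mathcal{I}$-replica congruence of $P$, i.e.\ the least congruence with idempotent quotient, and $P/\Phi\in\mm_{\mathcal{M}_1}$ is idempotent, so $\rho\subseteq\Phi$. Hence $\Phi=\rho$, and the same Lemma identifies $P/\rho$ with the algebra of subalgebras of $F_{\mathcal{M}_1}(X)$ displayed in the statement; combined with the freeness of $P/\Phi$ this proves the theorem.

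The step I expect to be the main obstacle is the identification $\Phi=\rho$ — concretely, verifying that $P$ (equivalently $P/\rho$) already carries everything demanded of an $\mathcal{M}_1$-algebra beyond idempotency. The linear identities (entropy, and symmetry once the unit laws are in force) are handled by the Gr\"atzer--Lakser results (Theorems~\ref{gl} and~\ref{gl2}); but Condition~\eqref{con:1} is a quasi-identity and hence not covered by that machinery, and the fact that $\langle\{1\}\rangle$ remains a two-sided unit after forming generated subalgebras also needs a separate argument. Both are immediate from the definition of the complex operations, as sketched above, and the remaining steps are routine, parallel to the earlier free-algebra theorems of this section.
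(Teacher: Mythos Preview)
Your proposal is correct and follows essentially the same route the paper takes (implicitly): the theorem is not given a separate proof in the paper, but is meant to follow directly from the preceding Lemma together with the replica argument of Theorem~\ref{sec4:thm5}, transported to the signature with a unit via Corollary~\ref{cor:up3}. Your explicit verification that the $\mm_{\mathcal{M}_1}$-replica congruence $\Phi$ coincides with the $\mathcal{I}$-replica congruence $\rho$ --- by checking $P/\rho\in\mm_{\mathcal{M}_1}$ for one inclusion and using minimality of $\rho$ among congruences with idempotent quotient for the other --- makes precise a step the paper leaves to the reader, and the direct check of Condition~\eqref{con:1} on $P/\rho$ is indeed needed since that condition is a quasi-identity. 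One small notational point: the paper writes $F_{\mathcal{M}}(X)$ in the displayed description, relying on the tacit identification of $F_{\mathcal{M}_1}(X)$ with $F_{\mathcal{M}}(X)$ together with an adjoined unit; your use of $F_{\mathcal{M}_1}(X)$ throughout is the cleaner choice and matches what the Lemma actually requires.
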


\begin{cor}\label{cor:01}
Let $\mathcal{M}_1$ be the variety of all idempotent and entropic $\Om$-algebras $(M,\Om,1)$ with the unit $1$ which satisfy Condition \eqref{con:1}.

Then the $0$-semilattice ordered algebra
$(\{\langle A \rangle\colon 1\notin A\in \mathcal{P}^{<\om}F_{\mathcal{M}}(X)\}\cup\{\langle A\cup\{1\} \rangle\colon 1\notin A\in \mathcal{P}^{<\om}F_{\mathcal{M}}(X)\},\Om,+,\emptyset,\{1\})$
is free over a set $X$ in the variety $\mm^0_{\mathcal{M}_1}$.
\end{cor}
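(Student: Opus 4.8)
The plan is to obtain this from the preceding Theorem by the same device used to pass from Theorem \ref{thm:up1} to Corollary \ref{cor:up2}, namely by freely adjoining a new least element. Write $(G,\Om,+,\{1\})$ for the free semilattice ordered algebra over $X$ in $\mm_{\mathcal{M}_1}$ supplied by the preceding Theorem, so that
\[
G=\{\langle A \rangle\colon 1\notin A\in \mathcal{P}^{<\om}_{>0}F_{\mathcal{M}}(X)\}\cup\{\langle A\cup\{1\} \rangle\colon 1\notin A\in \mathcal{P}_{>0}^{<\om}F_{\mathcal{M}}(X)\}\cup\{\{1\}\},
\]
with unit the subalgebra $\{1\}$. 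Put $G^0:=G\cup\{\emptyset\}$ for a fresh symbol $\emptyset$, extend the semilattice operation by $\emptyset+g:=g=:g+\emptyset$, and extend each $\om\in\Om$ by declaring $\om(g_1,\dots,g_n):=\emptyset$ whenever some $g_i=\emptyset$. Adopting the convention $\langle\emptyset\rangle:=\emptyset$, the element $\emptyset$ is $\langle A\rangle$ for $A=\emptyset$ and the unit $\{1\}$ is $\langle A\cup\{1\}\rangle$ for $A=\emptyset$, so the underlying set of $G^0$ is precisely the set displayed in the statement (the index $A$ now ranging over all of $\mathcal{P}^{<\om}F_{\mathcal{M}}(X)$).

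First I would check that $(G^0,\Om,+,\emptyset,\{1\})\in\mm^0_{\mathcal{M}_1}$. By construction $\emptyset$ is the least element of $(G^0,+)$ and the zero-absorption law for the $\Om$-operations holds. Distributivity over $+$, idempotency, entropicity, the fact that $\{1\}$ is still a unit, and Condition \eqref{con:1} all follow from the corresponding properties of $(G,\Om,+,\{1\})$ by a routine case distinction: if one of the arguments equals $\emptyset$, then both sides of the relevant identity collapse to $\emptyset$; if no argument equals $\emptyset$, the computation takes place inside $G$. For Condition \eqref{con:1} note additionally that $\{1\}\neq\emptyset$ in $G^0$, so its premise is never met when an argument is $\emptyset$.

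Next I would verify the universality property. Let $(A,\Om,+,0,1)\in\mm^0_{\mathcal{M}_1}$ and $h\colon X\to A$. Forgetting the constant $0$ gives $(A,\Om,+,1)\in\mm_{\mathcal{M}_1}$, so by the preceding Theorem $h$ extends uniquely to a $\{\Om,+,1\}$-homomorphism $\overline{h}\colon(G,\Om,+,\{1\})\to(A,\Om,+,1)$. Define $\widehat{h}\colon G^0\to A$ by $\widehat{h}|_{G}:=\overline{h}$ and $\widehat{h}(\emptyset):=0$. Exactly as in the proof of Corollary \ref{cor:up2}, $\widehat{h}$ respects $+$ and each $\om$ in the cases involving $\emptyset$ as well, since $0$ is the least element of $A$ and is absorbing for the $\Om$-operations of $A$; hence $\widehat{h}$ is a homomorphism of $0$-semilattice ordered algebras with a unit extending $h$. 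For uniqueness, any such homomorphism $G^0\to A$ extending $h$ must map $\emptyset$ to $0$ and restricts to a $\{\Om,+,1\}$-homomorphism $G\to A$ extending $h$, which by the preceding Theorem is forced to equal $\overline{h}$; so it coincides with $\widehat{h}$.

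This proves that $(G^0,\Om,+,\emptyset,\{1\})$ is free over $X$ in $\mm^0_{\mathcal{M}_1}$, which is the assertion. I do not anticipate a genuine obstacle: the only step needing care is the verification that adjoining the bottom element preserves membership in $\mm^0_{\mathcal{M}_1}$ (in particular that Condition \eqref{con:1} survives), handled by the case split above. Alternatively one could bypass the explicit adjunction and compute directly the $\mm^0_{\mathcal{M}_1}$-replica of the relevant $\emptyset$-extended power algebra with a unit, extending the relation $\rho$ by $\emptyset\,\rho\,A\Leftrightarrow A=\emptyset$ as in the preceding Lemma; but the route through the preceding Theorem is shorter.
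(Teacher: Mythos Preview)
Your argument is correct. The paper gives no proof for this corollary; it is simply recorded as an immediate consequence of the surrounding material. The route the paper has in mind is the one you mention at the end as an alternative: combine Corollary~\ref{cor:up4} (universality of $(\mathcal{P}^{<\om}F_{\mathcal{M}_1}(X),\Om,\cup,\emptyset,\{1\})$) with the analogue of Theorem~\ref{sec4:thm5}, and use that the paper has already observed $\emptyset\,\rho\,A\Leftrightarrow A=\emptyset$ and $\{1\}\,\rho\,A\Leftrightarrow A=\{1\}$, so the $\mathcal{I}$-replica congruence $\rho$ extends unchanged to the $\emptyset$-extended power algebra with unit and the quotient is exactly the algebra in the statement.

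Your primary route---freely adjoining a bottom element to the free object of the preceding Theorem and then verifying membership in $\mm^0_{\mathcal{M}_1}$ and the universal property---is a genuinely different but equally valid packaging. It trades the (already done) computation of the replica on $\mathcal{P}^{<\om}$ for a direct case split on whether an argument equals $\emptyset$; this is perhaps cleaner here since nothing new about $\rho$ needs to be checked. The only delicate point, which you handle correctly, is that Condition~\eqref{con:1} survives in $G^0$ because $\emptyset\neq\{1\}$, so the premise $\om(x_1,\dots,x_n)=\{1\}$ forces every $x_i\in G$. Either route yields the result with no real obstacle.
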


\subsection{Commutative double idempotent semirings.}
\begin{de}
A {\bf\emph{semiring}} is an algebra $(S,\cdot,+)$ such that
\begin{enumerate}
\item $(S,\cdot)$ is a semigroup,
\item $(S,+)$ is a commutative semigroup, 
\item for $a,b,c\in S$, $a\cdot(b+c)=a\cdot b+a\cdot c$ and $(b+c)\cdot a=b\cdot a+c\cdot a$.
\end{enumerate}
\end{de}
A semiring is said to be {\bf\emph{commutative}} if the semigroup $(S,\cdot)$ is commutative.
A semiring is {\bf\emph{additively}} [{\bf\emph{multiplicatively}}, respectively]  {\bf\emph{idempotent}} if the semigroup
$(S,+)$ [$(S,\cdot)$, respectively] is idempotent. Hence, additively idempotent semirings are simply semilattice ordered semigroups.
\begin{remark}
Notice that in the literature of semirings there
are several definitions depending on whether the algebra contains an identity
and/or a zero element. See e.g. \cite{G99}.
\end{remark}
Let $\mathcal{SG}$ denote the variety of all semigroups. Since associativity is a linear identity, then by Theorem \ref{sec4:thm3} the extended power algebra $(\wpf F_{\mathcal{SG}}(X),\cdot,\cup)$ is free over $X$ in the variety
$\mm_{\mathcal{SG}}$ of all additively idempotent semirings. The similar description of free additively idempotent semirings, where $(S,\cdot)$ belongs to a subvariety of $\mathcal{SG}$, defined by a set of linear identities, is also true.

On the other hand, idempotency is not a linear identity so the extended power algebra of the free algebra in the variety of all idempotent semigroups (\emph{bands}) need not be idempotent. In consequence, such algebra is not free algebra in the variety of all additively and multiplicatively idempotent semirings. Double idempotent semirings were called \emph{distributive $\cdot$-bisemilattices} by R. McKenzie and A. Romanowska and studied in \cite{MR79}.

If the semigroup $(S,\cdot)$ is also entropic (\emph{normal band}), i.e. it satisfies for $a,b,c,d\in S$
\[
a\cdot b\cdot c\cdot d=a\cdot c\cdot b\cdot d,
\]
then by Theorem \ref{sec4:thm3} the algebra  $(\{\langle A \rangle\colon A\in \mathcal{P}^{<\om}_{>0} F_{\mathcal{NB}}(X)\},\cdot,+)$ of all finitely generated subalgebras of free algebra  $F_{\mathcal{NB}}(X)$ in the variety  $\mathcal{NB}$ of all normal bands, is free in the variety of double idempotent semirings with entropic multiplication reduct. This result coincides with a construction given by Zhao in \cite{Z02} where he applied so called \emph{closed subsets}, since each non-empty subset of a normal band is closed if and only if it is a subband.

Quite recently Chajda and Langer \cite{ChL18} investigated commutative double idempotent semirings $(S,\cdot,+,0,1)$ with two constants $0$ and $1$, such that $(S,+,0)$ and $(S,\cdot,1)$ are semilattices with the least element $0$ and the greatest element $1$, respectively, and for each $x\in S$,
\[
x\cdot 0=0\cdot x=0.
\]
Clearly, such semirings are exactly $0$-semilattice ordered semilattices with a unit $1$.
In particular, Chajda and Langer described free algebras in the variety $\mathcal{Z}$ of all commutative double idempotent semirings with two constants. Since commutative semigroups are trivially entropic some results in \cite{ChL18} immediately follows by general ones.

Let $\mathcal{SL}_1$ be the variety of all semilattices with a unit $1$ and let $(F_{\mathcal{SL}}(X),\cdot)$ be the free semilattice in
$\mathcal{SL}$ generated by a set $X$. Obviously, Condition \eqref{con:1} is satisfied in any idempotent monoid, so by Corollary \ref{cor:01} we obtain:
\begin{thm}\label{thm:cdis}
The $0$-semilattice ordered algebra
$(\{\langle A \rangle\colon 1\notin A\in \mathcal{P}^{<\om}F_{\mathcal{SL}}(X)\}\cup\{\langle A \rangle\cup\{1\}\colon 1\notin A\in \mathcal{P}^{<\om}F_{\mathcal{SL}}(X)\},\cdot,+,\emptyset,\{1\})$
is free over a set $X$ in the variety $\mathcal{Z}=\mm^0_{\mathcal{SL}_1}$.
\end{thm}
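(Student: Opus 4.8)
The plan is to derive Theorem~\ref{thm:cdis} as a direct specialization of Corollary~\ref{cor:01}. First I would check that the variety $\mathcal{Z}$ of all commutative double idempotent semirings with two constants $0,1$ coincides with $\mm^0_{\mathcal{SL}_1}$, the variety of all $0$-semilattice ordered $\mathcal{SL}_1$-algebras: an algebra $(S,\cdot,+,0,1)$ lies in $\mathcal{Z}$ iff $(S,+,0)$ is a join semilattice with least element $0$, $(S,\cdot,1)$ is a semilattice (hence commutative, associative, idempotent) with unit $1$, $\cdot$ distributes over $+$, and $x\cdot 0=0\cdot x=0$ for all $x$. These are precisely the defining conditions of a $0$-semilattice ordered algebra over the variety $\mathcal{SL}_1$ of semilattices with a unit, with $\Om=\{\cdot\}$; commutativity and idempotence of $\cdot$ are inherited from $\mathcal{SL}$, and the absorption law $x\cdot 0=0$ is exactly the condition imposed on constants of the first type in the definition of a $0$-semilattice ordered algebra. (Note that the greatest-element description of $1$ in $(S,\cdot,1)$ is automatic from the semilattice order of $\cdot$, while $1$ need not be greatest in the join semilattice $(S,+)$.) So $\mathcal{Z}=\mm^0_{\mathcal{SL}_1}$, and the relevant $\mathcal{M}_1$ from Corollary~\ref{cor:01} is $\mathcal{SL}_1$.

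Next I would verify the two hypotheses of Corollary~\ref{cor:01} for $\mathcal{M}_1=\mathcal{SL}_1$. The algebra $(M,\cdot,1)\in\mathcal{SL}_1$ is idempotent (it is a semilattice) and entropic (commutative semigroups are trivially entropic, and a single binary commutative associative operation commutes with itself), so $\mathcal{SL}_1\subseteq\mathcal{M}_1$ in the sense required. Condition~\eqref{con:1} asks that $\om(x_1,\ldots,x_n)=1$ implies every $x_i=1$; here the only operation is the binary $\cdot$, and in a semilattice with greatest element $1$ one has $x\cdot y=1\iff x=y=1$ (since $x\cdot y\le x$ and $x\cdot y\le y$ in the meet-semilattice order, and $1$ is the top). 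Hence Condition~\eqref{con:1} holds in every semilattice with a unit, as remarked just before the statement. Thus Corollary~\ref{cor:01} applies verbatim.

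Finally I would read off the conclusion. Corollary~\ref{cor:01} states that the $0$-semilattice ordered algebra
$(\{\langle A\rangle\colon 1\notin A\in\mathcal{P}^{<\om}F_{\mathcal{M}}(X)\}\cup\{\langle A\cup\{1\}\rangle\colon 1\notin A\in\mathcal{P}^{<\om}F_{\mathcal{M}}(X)\},\Om,+,\emptyset,\{1\})$
is free over $X$ in $\mm^0_{\mathcal{M}_1}$, where $F_{\mathcal{M}}(X)$ is the free algebra in the (constant-free) variety $\mathcal{M}$ underlying $\mathcal{M}_1$. In our case $\mathcal{M}=\mathcal{SL}$, so $F_{\mathcal{M}}(X)=F_{\mathcal{SL}}(X)$, $\Om=\{\cdot\}$, and since every non-empty subset of a semilattice which contains no copy of $1$ generates the same subalgebra as its closure, $\langle A\cup\{1\}\rangle=\langle A\rangle\cup\{1\}$ for $1\notin A$; substituting gives exactly the displayed algebra
$(\{\langle A\rangle\colon 1\notin A\in\mathcal{P}^{<\om}F_{\mathcal{SL}}(X)\}\cup\{\langle A\rangle\cup\{1\}\colon 1\notin A\in\mathcal{P}^{<\om}F_{\mathcal{SL}}(X)\},\cdot,+,\emptyset,\{1\})$,
which is therefore free over $X$ in $\mathcal{Z}=\mm^0_{\mathcal{SL}_1}$. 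The only genuine work, and the main (minor) obstacle, is the bookkeeping identification $\mathcal{Z}=\mm^0_{\mathcal{SL}_1}$ together with the harmless rewriting $\langle A\cup\{1\}\rangle=\langle A\rangle\cup\{1\}$; everything else is an immediate invocation of Corollary~\ref{cor:01}.
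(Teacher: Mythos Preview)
Your proposal is correct and follows exactly the paper's approach: the paper derives Theorem~\ref{thm:cdis} as an immediate application of Corollary~\ref{cor:01}, noting just before the statement that $\mathcal{Z}=\mm^0_{\mathcal{SL}_1}$, that commutative semigroups are trivially entropic, and that Condition~\eqref{con:1} ``obviously'' holds in any idempotent monoid. Your write-up simply spells out these verifications in more detail and makes explicit the harmless rewriting $\langle A\cup\{1\}\rangle=\langle A\rangle\cup\{1\}$ (valid since $1$ is a unit for $\cdot$), which the paper leaves implicit in passing from the notation of Corollary~\ref{cor:01} to that of Theorem~\ref{thm:cdis}.
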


Therefore, directly by Disjunctive Form Lemma \ref{slowa} every term $t(x_1,\ldots,x_n)\in F_{\mm^0_{\mathcal{SL}_1}}(X)$ is a sum of some products of variables
$x_1,\ldots,x_n$ (see \cite[Lemma 4]{ChL18}).

Further, it is well known that free algebra generated by $X$ in the variety $\mathcal{SL}_0$ is isomorphic to the semilattice $(\mathcal{P}X,\cup)$ of all subsets of $X$. Then the number of different $n$-ary terms in $F_{\mm^0_{\mathcal{SL}_1}}(X)$ is less or equal to $2^{2^n}$ (see \cite[Corollary 5]{ChL18}). Locally finitness of $\mathcal{Z}=F_{\mm^0_{\mathcal{SL}_1}}(X)$ follows also by Theorem \ref{thm13}.

If $X$ is a finite set, then by Theorem \ref{thm:cdis} the cardinality of $F_{\mm^0_{\mathcal{SL}_1}}(X)$ is equal double cardinality of the set of all subalgebras of the free algebra in the variety $\mathcal{SL}$ including the empty set:
\[
|F_{\mm^0_{\mathcal{SL}_1}}(X)|=2|\{(A,\cdot)\colon (A,\cdot)\leq F_{\mathcal{SL}}(X)\}|.
\]
In particular, for $X=\emptyset$ there are only one subalgebra of $F_{\mathcal{SL}}(\emptyset)$: the empty set. Then $F_{\mm^0_{\mathcal{SL}_1}}(\emptyset)\cong (\{\emptyset,\{1\}\},\cdot,\cup,\emptyset,\{1\})$. Further, for $X=\{x\}$ we obtain
\[
F_{\mm^0_{\mathcal{SL}_1}}(\{x\})\cong (\{\emptyset,\{x\},\{1\},\{x,1\}\},\cdot,\cup,\emptyset,\{1\}).
\]
For $X=\{x,y\}$, the free semilattice $F_{\mathcal{SL}}(X)$ on two generators has three elements: $x,y,xy$ and 7 subalgebras (including the empty set): $\emptyset,\{x\}, \{y\}, \{xy\}, \{x,xy\}, \{y,xy\}, \{x,y,xy\}$. Hence $|F_{\mm^0_{\mathcal{SL}_1}}(\{x,y\})|=14$.

Referring to the notion introduced in \cite{ChL18} we say that a subset $A$ of $F_{\mathcal{SL}}(X)$ is {\bf\emph{reduced}} if
\begin{align*}
&\forall(a\in A)\; \forall(k\in \mathbb{N}^+)\; \forall(b_1,\ldots,b_k\in A\setminus\{a\})\quad a\neq b_1\cdots b_k.
\end{align*}

It is evident that for each finitely generated subalgebra $(C,\cdot)$ of $F_{\mathcal{SL}}(X)$ there exists exactly one finite reduced subset $A_r\subseteq F_{\mathcal{SL}}(X)$ such that $(C,\cdot)=\langle A_r \rangle$. Hence, the cardinality of the free algebra $F_{\mm_{\mathcal{SL}}}(X)$ in the variety $\mm_{\mathcal{SL}}$ of all semilattice ordered semilattices is equal to cardinality of all reduced subsets of $F_{\mathcal{SL}}(X)$. This implies that the cardinality of $F_{\mm^0_{\mathcal{SL}_1}}(X)$ is equal double cardinality of the set of all reduced subsets of $F_{\mathcal{SL}}(X)$ including empty set.

\subsubsection*{Acknowledgements}
While working on this paper, the authors were supported by the
Grant of Warsaw University of Technology 504/04259/1120.

\end{document}